\title{Paraconsistent and Paracomplete Zermelo-Fraenkel Set Theory}
\author{Yurii Khomskii\thanks{Amsterdam University College and Hamburg University. },  $\;\;\;$  Hrafn Valt{\'y}r Oddsson\thanks{Ruhr-Universit\"at Bochum.}} 
\newcommand{\p}{\medskip \noindent}
\newcommand{\leqqq}{\leftrightarrow}
\newcommand{\PP}{\mathscr{P}}
\newcommand{\till}{{\upharpoonright}}
\newcommand{\ZFC}{{\rm \sf ZFC}}
\newcommand{\ran}{{\rm ran}}
\newcommand{\dom}{{\rm dom}}
\newcommand{\s}[1]{\medskip \noindent \textbf{#1}}
\newcommand{\M}{\mathcal{M}}
\newcommand{\N}{\mathcal{N}}
\newcommand{\IN}{\mathbb{N}}
\newcommand{\bb}{\mathfrak{b}}
\newtheorem{Thm}{Theorem}[section]
\newtheorem{Lem}[Thm]{Lemma}
\newtheorem{Cor}[Thm]{Corollary}
\theoremstyle{definition}
\newtheorem{Def}[Thm]{Definition}
\newtheorem{Remark}[Thm]{Remark}
\newtheorem{Convention}[Thm]{Convention}
\newcommand{\IZF}{\mathrm{IZF}}
\newcommand{\CZF}{\mathrm{CZF}}
\renewcommand{\ZFC}{\mathrm{ZFC}}
\newcommand{\BZFC}{\mathrm{BZFC}}
\newcommand{\PZFC}{\mathrm{PZFC}}
\newcommand{\ACLA}{\mathrm{ACLA}}
\newcommand{\BS}{\mathrm{BS4}}
\renewcommand{\bb}{\mathfrak{b}}
\newcommand{\nn}{\mathfrak{n}}
\renewcommand{\iff}{\;\;\; \Leftrightarrow \;\;\;}
 \newcommand{\no}{{\sim}} 
 \newcommand{\abbr}{\;\;\; \text{abbreviates} \;\;\; }
\newcommand{\IW}{{\mathbb{W}}}
\newcommand{\HCL}{{\textup{HCL}}}
\newcommand{\IHCL}{{\mathbb{HCL}}}
\newcommand{\Clas}{{\mathrm{C}}}
\begin{document}

\maketitle

\begin{abstract}
 
 We present a novel treatment of set theory in a four-valued {\it paraconsistent} and {\it paracomplete}  logic, i.e., a logic in which propositions can be both true and false, and neither true nor false.  Our approach is a significant departure from previous research in paraconsistent set theory, which has almost exclusively been motivated by a desire to avoid Russell's paradox and fulfil naive comprehension.   Instead,  we prioritise setting up a system with a clear ontology of non-classical sets, which can be used to reason informally about incomplete and inconsistent phenomena,  and is sufficiently similar to $\ZFC$ to enable the development of interesting mathematics.
 
We propose an axiomatic system $\BZFC$, obtained by  analysing the  $\ZFC$-axioms and translating them to a four-valued setting in a careful manner, avoiding many of the obstacles encountered by other attempted formalizations.	We  introduce the {\it anti-classicality axiom} postulating the existence of non-classical sets, and prove a surprising results stating that the existence of a single non-classical set is sufficient to produce any other type of non-classical set.

Our theory is naturally bi-interpretable with $\ZFC$, and provides a philosophically satisfying view in which non-classical sets can be seen as a natural extension of classical ones, in a similar way to the non-well-founded sets of Peter Aczel \cite{AFA}.

Finally, we provide an interesting application concerning Tarski semantics, showing that the classical definition of the satisfaction relation yields a logic precisely reflecting the  non-classicality  in the meta-theory.

\end{abstract}

\s{MSC2020 Classification:} 03E70, 03B53, 03B60

\s{Keywords:} Non-classical set theory; paraconsistent set theory; paraconsistent logic; paraconsistent and paracomplete set theory

\maketitle



\section{Introduction}

 The  Zermelo-Fraenkel axiom system, $\ZFC$, is generally accepted as the foundation of  mathematics. $\ZFC$ is  formalized in classical  logic, in which any statement is  either true or false, and cannot be both at the same time.  Is there a possible interest in considering a set theory in which statements can be neither true nor false (paracomplete) or, more significantly, {\it both true and false} (paraconsistent)?
 
\bigskip The most common motivation for developing such  a theory has been the wish to avoid Russell's paradox and maintain some form of naive comprehension, i.e., the axiom scheme ``$\exists x \forall y (y \in x \leqqq \varphi(y))$'' for every $\varphi$.  In spite of the inconsistency of this scheme, many philosophers have found it more natural and intuitive than $\ZFC$, and have proposed alternative solutions, among others by  adopting a  logic in which contradictory statements can co-exist without trivializing. See, for instance, the work of  Graham Priest \cite{Priest-LP},  Greg Restall \cite{NoteNaive},  Thierry Libert \cite{Libert},   the survey by Roland Hinnion \cite{Hinnion},  and the recent book by Zach Weber \cite{Weber}.

Nevertheless, it is not hard to see,  and has  been known for  a long time (see, e.g., \cite{Geach}),  that paraconsistency or paracompleteness alone are not sufficient to sustain naive set theory, since one can easily appeal to a version of {\it Curry's paradox} instead, and consider $$R = \{x : (x \in x) \to \varphi\}$$ for an arbitrary proposition $\varphi$.  Any logic satisfying   {\it Modus Ponens} and the {\it implication-introduction} rule can derive $\varphi$ from the assumption that $R$ is a set,\footnote{The proof is exactly the same as the proof of Russell's paradox with $\varphi$ replacing $\bot$.} showing that naive comprehension can lead to a trivial theory  without  mentioning the {\it negation} connective at all. The only way to truly avoid paradoxes is to consider a logic that either does not have an implication (such as  the {\it logic of paradox} of \cite{Priest-LP}), or whose ``$\to$''-symbol is so far removed from its common usage that it violates  basic principles of reasoning.  Either way,  the price one has to pay seems too high. 





\bigskip 

Perhaps this pre-occupation with naive comprehension explains why most work in paraconsistent set theory has so far remained  speculative and philosophical in nature, never really `lifting off the ground'. 

But what if we focus our attention on other reasons for studying paracomplete and paraconsistent phenomena in mathematics? After all, there are many   `interesting inconsistencies' in mathematics quite aside of the semantic paradoxes. To name a prominent example,  think of  {\it Kunen's inconsistency}, a result that puts an upper bound on the hierarchy of large cardinal axioms \cite{KunenInconsistency}.  It is conceivable that paraconsistency can shed new light on this or related phenomena.  On a more down-to-earth level,  applications are conceivable in computer science, for example in the study of databases or structures with incomplete or inconsistent information.  Following Belnap \cite{Belnap1, Belnap2}, suppose we want an automated system to derive logical consequences from the information  in  a given database.  Presumably,  this system should not be able to derive any consequence whatsoever merely from the fact that $A(x) \land \lnot A(x)$ holds, which could be due to a wrong entry  in the database.  Some other, more specific applications to mathematical problems are listed in Section \ref{conclusion}.

\bigskip If a solid foundational framework for paracomplete and paraconsistent set theory is to be set up, it should, in our view,   satisfy a number of criteria:

\begin{enumerate}
\item It should be intuitive and philosophically motivated.
\item It should be sufficiently similar to $\ZFC$ to enable the development of interesting mathematics in it. 
\item There should be a tangible `ontology', i.e.,  a working mathematician should be able to  understand and `visualize' paracomplete and paraconsistent sets and how to manipulate them, without a  need to resort to formalism or double-check the axioms.
\item The existence of  paraconsistent and paracomplete sets should be guaranteed by virtue of the axioms,  and these sets should `extend' the von Neumann universe of classical sets.  
\item  It should be possible to construct a model for this theory starting from  $\ZFC$, showing that it is no more problematic than classical set theory.

\end{enumerate}

 This paper provides an exposition of a system which, we claim,  satisfies all of these criteria.  To our knowledge, nothing like this has been done before.
 
 \bigskip
 Our theory is established  in the logic  $\BS$:   a  four-valued logic based on early developments  by Dunn \cite{Dunn}, Belnap \cite{Belnap1, Belnap2} and  \cite{Avron}, later appearing under the name CLoNs in \cite{ExtraBS4}, and the predicate version due to  Omori, Sano and Waragai \cite{Observations, BS4complete} (from where we take its name).   These logics are called {\it logics of formal inconsistency} by Carnieli et al  \cite{Carnielli}.  In Section \ref{logic} we will introduce $\BS$, with a focus on semantics. 

The axiom system we propose is called $\BZFC$.  Intuitively, it can be seen as axiomatising a universe that properly extends the classical universe of sets and includes  {\it incomplete} sets ($u$ such that for some $x$ the statement $x \in u \lor x \notin u$ fails to be true) and  {\it inconsistent} sets ($u$ such that for some $x$, $x\in u \land x \notin u$ is true).  Each non-classical set $u$ can be described by a {\it positive} extension  (the collection of all $x$ such that $x \in u$ is true) and a {\it negative} extension  (the collection of all $x$ such that $x \in u$ is false).\footnote{We will actually talk about the complement of the negative extension and call it the $?$-extension; the reason is explained in Section \ref{intuition}.}

 \bigskip
  The first step of our task is finding an appropriate translation of the $\ZFC$ axioms to the non-classical setting. This is more laborious than might be expected, and previous attempts (e.g., \cite{Carnielli}) may well have stumbled over an insufficiently careful treatment of the $\ZFC$ axioms in this regard.  It would be a mistake to copy them literally; instead, one must think what the axioms were designed to achieve, and fine-tune the formulation to match this design. This is done in Sections \ref{intuition} and \ref{pzfc}.

  This leads us to an {\it intermediate} system, which we call $\PZFC$: an appropriate translation of the old axioms  but  not yet guaranteeing that any non-classical sets do exist. Indeed,  $\PZFC$ together with the statement that all sets are classical, is easily seen to be equivalent to $\ZFC$.  The actual system $\BZFC$ is then obtained by adding to $\PZFC$ an \emph{anti-classicality axiom},  postulating the existence of paraconsistent and paracomplete sets, which is done in Section \ref{bzfc}.  Here we prove an unexpected, yet surprisingly simple result showing that, essentially, all `anti-classicality axioms' are equivalent (Theorem \ref{aclaacla}).

\bigskip
Hinging on this crucial fact,  it is not difficult to construct a   model $\IW$ of $\BZFC$ starting from classical $\ZFC$.  On the other hand,  in   $\BZFC$ itself we can  define an inner model $\IHCL$  of `hereditarily classical' sets and prove (in $\BZFC$) that $\IHCL \models \ZFC$.  In addition,  this mutual interpretation is reversible leading to the fact that  $\BZFC$ is {\it bi-interpretable} with $\ZFC$ (Theorem \ref{bi-int}.)  These results are proved  in Sections \ref{model}, \ref{translation} and \ref{biinterpretability}, preceded by Section \ref{math} in which we build up some needed technology.

\bigskip 
Some readers may feel that the bi-interpretability result limits the significance of our theory, but we would argue for quite the contrary.  To us,   the resulting picture is philosophically very satisfying: if one's position favours the existence of  paracomplete and paraconsistent sets,  one  can view $\BZFC$ as describing the true universe,  an enrichment of the usual  universe of classical sets.  $\ZFC$ is the theory of the inner model $\IHCL$ in which  all of classical mathematics  takes place.  As long as we are only interested in classical mathematics, we can stay within $\IHCL$; but whenever we  encounters phenomena better described by paracomplete or paraconsistent sets,  we can look beyond and take full advantage of $\BZFC$.

  On the other hand,  if one is firmly committed to classical logic and cannot accept true contradictions or the lack of excluded middle, one can still consider $\BZFC$  a useful theory,  namely the theory of the  model $\IW$, using the semantics of $\BS$.  All of paraconsistent and paracomplete set theory as layed out in this paper, can then be understood as formal statements that $\IW$ believes to be true; but this fact is established in classical $\ZFC$.
  
  \bigskip Here,  we would like to draw a parallel to another extension of classical set theory, namely the theory ZFA of {\it non-well-founded sets} of Peter Aczel \cite{AFA}.  Note that ZFA is also bi-interpretable with ZFC,  and  a  similar philosophical freedom is available to the reader as above.  At the same time,  ZFA and the concept of non-well-founded sets has   found many useful applications in mathematics, philosophy and computer science.
  

\bigskip
The final Section \ref{modeltheory}  is devoted to a question of interest for the philosophy of logic, namely the relationship between properties of a formal language and the meta-language in which it is defined.  Starting from $\PZFC$, we show that, if a consequence relation $\models$ is formalised with standard   Tarski semantics,  then the logic which is sound and complete with respect to those semantics satisfies the same  paraconsistency and paracompleteness as in the meta-theory.

\bigskip The work in this paper was carried out in the course of the Master's thesis of the second author  \cite{HrafnThesis}. On occasion, we will refer to the thesis which goes in more depth on some points, and contains more  details which have been left out of this paper for clarity.
 
\section{The Logic $\BS$} \label{logic}



The logic $\BS$ is  due to \cite{BS4complete} with the exception that we take the contradictory constant $\bot$ as primary instead of the classicality operator $\circ$.  
In this section, the meta-theory is classical $\ZFC$.

\subsection{Syntax and Semantics} The main idea behind  $\BS$   is the  separation of truth from falsity, i.e., if $\varphi$ is a sentence and $\M$ a model, then $\varphi$ can be or not be {\it true} in $\M$ and, independently, can be or  not be  {\it false} in $\M$.  This is achieved by considering two interpretation of all predicate symbols  (a ``true'' and a ``false'' interpretation), adapting the inductive definition for the connectives and quantifiers, and thus obtaining two two satisfaction relations: $\models^T$ and $\models^F$.  For convenience we will  consider vocabularies without function symbols.

  
\begin{Def} (The Syntax of $\BS$) 

\p The {\it syntax} of $\BS$ is the usual syntax of first order logic, except that we use  ``$\sim$'' to denote negation.  We also  use the constant connective $\bot$.

\end{Def}

\begin{Def} \label{sem1} (T/F-models)  

\p Suppose $\tau$ is a vocabulary with constant and relation symbols. A {\it T/F-model} $\M$ consist of a {\it domain $M$} together with the following:

\begin{itemize}

\item An element $c^\M \in M$ for every constant symbol $c$.

\item For every $n$-ary relation symbol $R$,  a ``positive'' interpretation  $(R^\M)^+ \subseteq M^n$ and a ``negative'' interpretation $(R^\M)^- \subseteq M^n$.

\item A binary relation $=^+$   which coincides with the true equality relation; and a binary relation $=^-$ satisfying  $a =^- b$ iff $b =^- a$. 

\end{itemize}


\end{Def}

\begin{Def}  \label{mainsemantics} (T/F-semantics for $\BS$)

\p Suppose $\tau$ is a vocabulary without function symbols and $\M$ a T/F-model.  We define  $\models^T$ and $\models^F$ inductively:

{
\begin{enumerate}[1.]

\item $\M \models^T (t=s)[a,b] \iff a=^+ b$.

 $\M \models^F (t=s)[a,b] \iff a =^- b$.

\smallskip\item $\M \models^T R(t_1, \dots, t_n)[a_1 \dots a_n] \iff R^+(a_1, \dots, a_n)$ holds.

$\M \models^F R(t_1, \dots, t_n)[a_1 \dots a_n] \iff R^-(a_1, \dots, a_n)$ holds.

\smallskip \item $\M \models^T  \no \varphi \iff \M \models^F \varphi$.

$\M \models^F \no  \varphi \iff \M \models^T \varphi$.

\smallskip \item $\M \models^T  \varphi \land \psi   \iff \M \models^T \varphi$ and $\M \models^T \psi$.

  $\M \models^F  \varphi \land \psi   \iff \M \models^F \varphi$ or $\M \models^F  \psi$.

\smallskip \item $\M \models^T  \varphi \lor \psi   \iff \M \models^T \varphi$ or $\M \models^T \psi$.

  $\M \models^F  \varphi \lor \psi   \iff \M \models^F \varphi$ and $\M \models^F  \psi$.

\smallskip \item $\M \models^T  \varphi \to \psi   \iff$ if $ \M \models^T \varphi$ then $\M \models^T \psi$.

  $\M \models^F  \varphi \to \psi   \iff \M \models^T \varphi$ and $\M \models^F  \psi$.
 
\smallskip \item $\M \models^T  \varphi \leqqq \psi   \iff$ ($\M \models^T \varphi$ if and only if $\M \models^T \psi$).

 $\M \models^F  \varphi \leqqq \psi   \iff$ ($\M \models^T \varphi$ and $\M \models^F \psi$) or ($\M \models^F \varphi$ and $\M \models^T \psi$).

\smallskip \item $\M \models^T  \exists x \varphi(x)    \iff \M \models^T  \varphi[a]$ for some $a \in M$.

$\M \models^F  \exists x \varphi(x)    \iff \M \models^F  \varphi[a]$ for all $a \in M$.

 \smallskip \item  $\M \models^T  \forall x \varphi(x)    \iff \M \models^T  \varphi[a]$ for all $a \in M$.

$\M \models^F  \forall x \varphi(x)    \iff \M \models^F  \varphi[a]$ for some $a \in M$.

\smallskip \item  $\M \models^T \bot \iff$ never.

 $\M \models^F \bot \iff $ always.

\end{enumerate}
}
\noindent  If $\M \models^T \varphi$ then we say that $\varphi$ is {\it true} in $\M$, and if $\M \models^F \varphi$ then we say that $\varphi$ is {\it false} in $\M$. 
\end{Def}

 \begin{Def} \label{sem} (Semantic consequence) If $\Sigma$ is a set of formulas and $\varphi$ another formula, then {\it semantic consequence} is defined by \begin{center} $\Sigma \vdash_\BS \varphi$ 

\medskip
iff for every T/F-model $\M$, if $\M \models^T \Sigma$ then $\M \models^T \varphi$.
\end{center}
\end{Def}  \begin{Remark} \label{remark} While most of the inductive steps in Definition \ref{mainsemantics} are straightforward,  two points  need to be addressed:
 
 \begin{enumerate}
 
 \item The  falsum symbol ``$\bot$''  should not be understood  as just   a `contradiction' but rather as a strong form of falsity, one which cannot be satisfied even in `paraconsistent' models.  Some readers might initially find the  addition to   $\bot$ to the logic distasteful, as it seems to run counter to the idea of paraconsistency. However,  in a vocabulary with finitely many relation symbols, one can write the following sentence: 
 $$\forall x\forall y (x=y\land x\neq y)\land \bigwedge_{R  \in S} \forall x_1 \dots \forall x_n (R(x_1,...,x_n)\land \no R(x_1,...,x_n)).$$ 

This sentence {\it is} satisfiable, but only in the trivial model consisting of exactly one object $a$, which is both equal and not equal to itself ($a =^+ a$ and $a =^- a$) and for which all relation-interpretations are {\it true} and {\it false}.  Adding  ``$\bot$'' to the language is equivalent to disregarding this trivial model.  Since we focus on set theory, we will have no interest in such a model and thus have no reservations about $\bot$.
 
 \item There is some freedom in choosing the truth- and falsity-conditions for  the implication. For example, in the  logic LP from \cite{Priest-LP},  an implication $\varphi \to \psi$ would be an abbreviation for $\no \varphi \lor \psi$.  But such a logic fails to satisfy implication-introduction rule and the deduction theorem,  leading to many undesirable consequences, such as finite models of set theory, see \cite{NoteNaive}.  In $\BS$, the truth-definition  for the material implication reflects semantic consequence and makes sure that the deduction theorem is satisfied, while the falsity-condition reflects the existence of a counterexample.  
 
Combining the material implication with $\bot$,  we will be able to  define {\it classical negation}, and  refer not only to  {\it truth} and {\it falsity}, but also to the {\it absence} of truth and/or falsity,  from within the system.  But this should not be seen as a drawback of the system; indeed  the original formulation of $\BS$ from \cite{BS4complete} explicitly contained a  `classicality' operator which generates an equivalent logic.
 
 \end{enumerate}
 

 \end{Remark}

 \begin{Def} (Truth value) For every $\varphi$ and T/F-model $\M$ we define:
   
   $$   
    \llbracket   \varphi\rrbracket^\mathcal{M}  \;\; := \;\; \begin{cases}
    
    1 & \text{if } \M \models^T \varphi \text{ and }\M \not\models^F \varphi \\
     \bb & \text{if } \M \models^T \varphi \text{ and }\M \models^F \varphi  \\
      \nn & \text{if } \M  \not\models^T\varphi \text{ and }\M \not\models^F \varphi  \\
       0 & \text{if } \M \not\models^T \varphi \text{ and }\M \models^F \varphi  \\
       
       \end{cases}  $$
 
 \end{Def}
 \p We can now view $\BS$  as  a four-valued logic with  truth tables for propositional connectives given  in Table \ref{ttables}.

 \begin{table}[h] 

\rule{12cm}{0.02cm}
\begin{center} \footnotesize
\begin{tabular}{c|c}
&${\sim}$\\
\midrule
1&0\\
$\mathfrak{b}$&$\mathfrak{b}$\\
$\mathfrak{n}$&$\mathfrak{n}$\\
0&1\\
\end{tabular}
\hspace{1cm} \vspace{-0.5cm}
\begin{tabular}{c|cccc}
$\land$&1&$\mathfrak{b}$&$\mathfrak{n}$&0\\
\midrule
1&1&$\mathfrak{b}$&$\mathfrak{n}$&0\\
$\mathfrak{b}$&$\mathfrak{b}$&$\mathfrak{b}$&0&0\\
$\mathfrak{n}$&$\mathfrak{n}$&0&$\mathfrak{n}$&0\\
0&0&0&0&0\\
\end{tabular}
\hspace{1cm}
\begin{tabular}{c|cccc}
$\lor$&1&$\mathfrak{b}$&$\mathfrak{n}$&0\\
\midrule
1&1&1&1&1\\
$\mathfrak{b}$&1&$\mathfrak{b}$&1&$\mathfrak{b}$\\
$\mathfrak{n}$&1&1&$\mathfrak{n}$&$\mathfrak{n}$\\
0&1&$\mathfrak{b}$&$\mathfrak{n}$&0\\
\end{tabular}
  \end{center}
\footnotesize
\begin{center}
\vspace{0.5cm}
\begin{tabular}{c|cccc}
$\rightarrow$&1&$\mathfrak{b}$&$\mathfrak{n}$&0\\
\midrule
1&1&$\mathfrak{b}$&$\mathfrak{n}$&0\\
$\mathfrak{b}$&1&$\mathfrak{b}$&$\mathfrak{n}$&0\\
$\mathfrak{n}$&1&1&1&1\\
0&1&1&1&1 
\end{tabular}
\hspace{1cm}
\begin{tabular}{c|cccc}
$\leftrightarrow$&1&$\mathfrak{b}$&$\mathfrak{n}$&0\\
\midrule
1&1&$\mathfrak{b}$&$\mathfrak{n}$&0\\
$\mathfrak{b}$&$\mathfrak{b}$&$\mathfrak{b}$&$\mathfrak{n}$&0\\
$\mathfrak{n}$&$\mathfrak{n}$&$\mathfrak{n}$&1&1\\
0&0&0&1&1\\
\end{tabular}
  \end{center}

\rule{12cm}{0.02cm}
  
  \caption{Truth tables for the propositional connectives of $\BS$} 

\label{ttables} \end{table}

\subsection{Defined connectives} \label{defined}

We will need several defined connectives to make the presentation more smooth and intuitive. 

 First let us consider material implication: notice that $\M \models^T  \varphi \to \psi$ tells us that if $\varphi$ is true in $\M$ then $\psi$ is true in $\M$, but does not tell us that if $\psi$ is false in $\M$ then $\varphi$ is false in $\M$, as can easily be verified. Similarly, a bi-implication $\varphi \leqqq \psi$ tells us that, in a model $\M$,  $\varphi$ is true iff $\psi$ is true, but not that $\varphi$ is false iff $\psi$ is false. In particular, $\varphi \leqqq \psi$ does not allow us (as we are used from classical logic) to substitute an arbitrary occurrences of $\varphi$   with $\psi$ within a larger formula. 
 
 For this reason, we define the following abbreviations, which we call {\it strong implication} and {\it strong bi-implication}, respectively.\footnote{The strong implication appears, e.g., in   \cite[Chapter XII]{Strongimplication}.}

 \begin{itemize} \item $\varphi\Rightarrow\psi \;\;\; \text{abbreviates} \;\;\; ( \varphi\rightarrow \psi) \land ( {\sim}\psi\rightarrow{\sim}\varphi) $
 
 \item $\varphi\Leftrightarrow\psi \;\;\;   \text{abbreviates} \;\;\;  (\varphi  \leqqq \psi) \land ( \no \varphi \leqqq \no \psi).$ \end{itemize}

In particular,  if $\varphi \Leftrightarrow \psi$ is true then any occurrence of $\varphi$ may be substituted with $\psi$, and vice versa. The distinction between regular and strong implication and bi-implication will play a crucial role in the correct formulation of the axioms. 

\bigskip Next,  following up on Remark \ref{remark}  we define  {\it classical negation} as follows: 

 \begin{itemize} \item $\neg \varphi \abbr \varphi\rightarrow \bot$ 
\end{itemize}

One can easily check that $\M \models^T \lnot \varphi$ iff $\M \not\models^T \varphi$ while $\M \models^F \lnot \varphi$ iff $\M \models^T \varphi$.  So in a model $\M$, $\lnot \varphi$ can be either true and not false (when $\M \models^T \varphi$) or false and not true (when $\M \not\models^T \varphi$). Using classical negation as a defined notion we can talk about presence and absence of truth and falsity (and, more generally, specify the truth value of a formula) from within the system. We will use the following important abbreviations:  

 \begin{itemize} \item $! \varphi \abbr \no \lnot \varphi $
\item $? \varphi \abbr \lnot \no  \varphi $
\end{itemize}
We think of $!\varphi$ as  {\it presence of   truth} and $?\varphi$ as  {\it absence of  falsity}.  The truth tables for $\no, \lnot, !$ and $?$   in Table \ref{ttnot} make this clear.
Notice that $\lnot \varphi$, $!\varphi$ and $?\varphi$ will always have truth value $1$ or $0$. Moreover, the truth value of $\lnot \varphi$ and $!\varphi$  depends {\it only} on whether $\varphi$ was {\it true} in the model, and completely disregards whether $\varphi$ was {\it false}. Similarly,   $?\varphi$ depends only on whether $\varphi$ was {\it false} and disregards whether it was {\it true}.  

\begin{table}[h]
\bigskip

\begin{center} 
\begin{tabular}{c|ccccc}\toprule
$\varphi$& $\no \varphi$ & $\lnot \varphi$&$ !\varphi $& $?\varphi$\\
\midrule
1 & 0 & 0 & 1 & 1 \\
$\bb$  & $\bb$& 0 & 1 & 0 \\
$\nn$ & $\nn$ & 1 & 0 & 1 \\
$0$ & 1& 1 & 0 & 0 \\
\bottomrule
\end{tabular}
\end{center}


\caption{Truth table for $\no,  \lnot, !$ and $?$. 
} \label{ttnot}
\end{table}

 A $\BS$-formula is  \emph{complete} if it can never obtain the truth-value $\nn$,  and {\it consistent} if it can never obtain the truth-value  $\bb$.  It is called \emph{classical} if it is both complete and consistent, i.e., if in every model it has truth-value $1$ or $0$.  In particular, $\lnot \varphi$, $!\varphi$ and $?\varphi$ are classical formulas for any $\varphi$.  Notice also that classicality, completeness and consistency can each be expressed within the system, by $ {!}\varphi\leqqq {?}\varphi$,  ${?}\varphi\to {!}\varphi$ and $!\varphi \to ?\varphi$, respectively.  We will use the following abbreviation in accordance to  \cite{Avron}: 
 
 \begin{itemize} \item $\circ \varphi \abbr {!}\varphi\leqqq {?}\varphi$ \end{itemize}
 It is easy to see that,  taking   $\circ$   as primary rather than $\bot$, we obtain the same logic. 

\subsection{Proof system} \label{proof}

A sound and complete proof calculus for  $\BS$ is  presented in \cite{BS4complete}.  We use a slightly modified but equivalent version,  with the following axioms and rules of inference:

\begin{itemize}
\item Axioms of classical predicate logic:
    \begin{multicols}{2}
    \begin{enumerate}
	\item
    $\varphi \rightarrow (\psi \rightarrow \varphi)$
    \item
    $(\varphi \rightarrow (\psi \rightarrow \chi)) \rightarrow \\ ((\varphi \rightarrow \psi) \rightarrow (\varphi \rightarrow \chi))$
    \item
    $\varphi\lor (\varphi\rightarrow \psi)$
    \item
    $(\varphi \wedge \psi) \rightarrow \varphi$
    \item
    $(\varphi \wedge \psi) \rightarrow \psi$
    \item
    $\varphi \rightarrow (\psi \rightarrow (\varphi \wedge \psi))$
    \item
    $\varphi \rightarrow (\varphi \vee \psi)$
    \item
    $\psi \rightarrow (\varphi \vee \psi)$
	\item
    $(\varphi \rightarrow \chi) \rightarrow ((\psi \rightarrow \chi) \rightarrow \\ ((\varphi \vee\psi) \rightarrow \chi))$
	\item
    $\bot\rightarrow \varphi$
	\item
    $\forall x\varphi(x)\rightarrow \varphi(t)$
	\item
    $\varphi(t)\rightarrow \exists x\varphi(x)$
    \item
    $x=x$
    \item
    $x=y\rightarrow ( \varphi(x)\rightarrow \varphi (y) ) $
    \end{enumerate}
    \end{multicols}
\item Axioms for negation:
    \begin{multicols}{2}
    \begin{enumerate}\setcounter{enumi}{14}
	\item
    ${\sim} {\sim} \varphi \leftrightarrow \varphi$
	\item
    ${\sim} (\varphi \wedge \psi) \leftrightarrow ({\sim} \varphi  \vee {\sim} \psi)$
	\item
    ${\sim} (\varphi \vee \psi) \leftrightarrow ({\sim} \varphi  \wedge {\sim} \psi)$
	\item
    ${\sim} (\varphi \rightarrow \psi) \leftrightarrow (\varphi  \wedge {\sim} \psi)$
    \item
    ${\sim}\bot$
    \item
    ${\sim}\forall x\varphi\leftrightarrow \exists x {\sim}\varphi$
    \item
    ${\sim}\exists x\varphi\leftrightarrow \forall x {\sim}\varphi$
    \item
    ${\sim}(x=y)\rightarrow{\sim}(y=x)$.\footnote{Axiom 22 does not occur in the original formulation in \cite{BS4complete},  but we need to add it to take care of the semantic requirement that $=^-$ is a symmetric relation.}
    \end{enumerate}
    \end{multicols}
\item The rules of inference:
    \begin{enumerate}\setcounter{enumi}{22}
    \item From $\varphi$ and $\varphi\rightarrow \psi$, infer $\psi$ (modus ponens).
    \item Infer $\varphi\rightarrow\forall x\psi $ from $\varphi\rightarrow\psi$, provided  $x$ does not occur free in $\varphi.$
    \item Infer $\exists x\varphi\rightarrow\psi $ from $\varphi\rightarrow\psi$, provided  $x$ does not occur free in $\psi.$
    \end{enumerate}
\end{itemize}

\begin{Lem} \label{completeness} The calculus described above is sound and complete with respect to T/F-semantics. \end{Lem}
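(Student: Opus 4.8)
The plan is to establish soundness and completeness separately, following the standard Henkin-style strategy adapted to the four-valued setting. For \emph{soundness}, I would proceed by induction on the length of derivations: check that each of axioms 1--22 is a tautology of $\BS$ (i.e., has truth-value $1$ in every T/F-model under every assignment) and that the three rules of inference preserve truth. The axiom checks are routine verifications using the truth tables in Table \ref{ttables} together with the clauses of Definition \ref{mainsemantics}; for instance, axiom 3 ($\varphi \lor (\varphi \to \psi)$) uses the fact that $\to$ has value $1$ whenever its antecedent is not true, so a case split on whether $\M \models^T \varphi$ suffices. Axioms 15--21 are verified directly from clauses 3--9 of Definition \ref{mainsemantics}, which are precisely the De Morgan-style dualities for $\models^F$. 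Axiom 22 corresponds exactly to the semantic stipulation that $=^-$ is symmetric. Modus ponens preserves truth by clause 6, and the two quantifier rules by clauses 8 and 9 together with the side conditions on free variables.

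For \emph{completeness}, I would show that every $\BS$-consistent set of formulas $\Sigma$ (consistent in the sense that $\Sigma \not\vdash \bot$) has a T/F-model satisfying it, and then argue the contrapositive of Definition \ref{sem}. The construction is a Henkin argument: extend the language with fresh constants as witnesses, extend $\Sigma$ to a maximal consistent set $\Sigma^*$ with the witness property (closure under existential instantiation), using that the classical-logic axioms 1--14 plus modus ponens give the Lindenbaum lemma and the deduction theorem in the usual way. Then build the term model $\M$ whose domain is the set of closed terms modulo the equivalence $t \sim s \iff (t = s) \in \Sigma^*$ (well-defined by axioms 13--14), with $c^\M = [c]$. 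The key new feature is that both interpretations of each relation symbol must be read off from $\Sigma^*$: put $([t_1],\dots,[t_n]) \in (R^\M)^+ \iff R(t_1,\dots,t_n) \in \Sigma^*$ and $([t_1],\dots,[t_n]) \in (R^\M)^- \iff {\sim} R(t_1,\dots,t_n) \in \Sigma^*$, and likewise $[t] =^- [s] \iff {\sim}(t = s) \in \Sigma^*$; axiom 22 guarantees $=^-$ is symmetric, and axiom 14 (substitutivity) guarantees these are well-defined on equivalence classes. One then proves the \emph{Truth Lemma} in its doubled form, by simultaneous induction on $\varphi$: $\M \models^T \varphi \iff \varphi \in \Sigma^*$ and $\M \models^F \varphi \iff {\sim}\varphi \in \Sigma^*$. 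The negation clause of the induction is handled by axiom 15 (${\sim}{\sim}\varphi \leftrightarrow \varphi$), the connective clauses by axioms 16--18 matched against clauses 4--6 of Definition \ref{mainsemantics}, and the quantifier clauses by axioms 20--21 together with the Henkin witnesses (here one needs witnesses for \emph{both} $\exists x \varphi$ being in $\Sigma^*$ and $\exists x\, {\sim}\varphi$ being in $\Sigma^*$, i.e., for the falsity of $\forall x \varphi$, so the witness property must be imposed symmetrically).

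The main obstacle I anticipate is the bookkeeping in the Truth Lemma caused by the \emph{two} satisfaction relations: every inductive clause must be proved in both its $\models^T$ and $\models^F$ form, and the falsity clauses route through negation normal-form manipulations licensed by the negation axioms 15--21 rather than being primitive. In particular, the $\to$ and $\leftrightarrow$ cases are delicate because their falsity conditions (clauses 6 and 7 of Definition \ref{mainsemantics}) are asymmetric: ${\sim}(\varphi \to \psi)$ is equivalent via axiom 18 to $\varphi \land {\sim}\psi$, which is exactly what clause 6 demands, but one must be careful that $\leftrightarrow$ is treated as a genuine connective with its own axiom-driven behaviour (or, if one prefers, defined in terms of $\to$ and $\land$, in which case its clauses follow from those cases). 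A secondary subtlety is ensuring the maximal consistent extension $\Sigma^*$ is genuinely closed under the quantifier rules in both polarities while only finitely many witness constants are consumed at each stage; this is the usual alternating-enumeration argument and presents no real difficulty once set up carefully. Everything else reduces to the fact that axioms 1--14 plus modus ponens behave exactly as in classical first-order logic with respect to the relation $\vdash$, so the Lindenbaum lemma, deduction theorem, and generalization-on-constants all transfer verbatim.
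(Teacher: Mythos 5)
Your proposal is correct in outline, but it takes a genuinely different route from the paper: the paper does not prove the lemma directly at all, but simply invokes the known completeness theorem for $\BS$ (Corollary 5.15 of the cited Omori--Sano paper), remarking only that one must adapt that presentation, which takes the classicality operator $\circ$ as primitive, to the present formulation with $\bot$ primitive (and, implicitly, that the added axiom 22 matches the semantic requirement that $=^-$ be symmetric). What you do instead is reconstruct the canonical-model argument from scratch: Lindenbaum extension using the deduction theorem (axioms 1--2 plus modus ponens), primeness of maximal nontrivial sets via axiom 3, a Henkin term model quotiented by $(t=s)\in\Sigma^*$ with \emph{both} interpretations $(R^\M)^+$, $(R^\M)^-$, and $=^-$ read off from $\Sigma^*$, and a doubled Truth Lemma whose falsity clauses are driven by the negation axioms 15--21. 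This is essentially the proof that lives inside the cited reference, so your version buys self-containment (and makes visible exactly where axioms 3, 13--14, 18 and 22 are used, e.g.\ axiom 3 for the $\to$ case of the Truth Lemma and axiom 22 for symmetry of $=^-$), at the cost of redoing work the paper deliberately outsources. Your observations that witnesses for the falsity of universal formulas are just witnesses for the existential formulas $\exists x\,\no\varphi$, and that $\leqqq$ can be treated as defined from $\to$ and $\land$ (its T/F clauses agree with the defined form), are both correct and are exactly the points where such an adaptation needs care.

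One slip to fix in the soundness half: it is not true that every axiom has truth value $1$ in every T/F-model; for instance $(\varphi\land\psi)\to\varphi$ takes value $\bb$ when $\llbracket\varphi\rrbracket^\M=\bb$, and $x=x$ takes value $\bb$ in any model with $a=^-a$. What soundness requires (and all that Definition \ref{sem} is about) is that each axiom is \emph{true}, i.e.\ $\M\models^T$ it, so value $1$ or $\bb$, and that the rules preserve $\models^T$. The induction you describe goes through verbatim once the claim is weakened in this way, so this is a local correction rather than a structural gap.
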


\begin{proof}  This follows by adapting the  proof of  \cite[Corollary 5.15]{BS4complete} to refer to $\bot$ rather than the classicality operator $\circ$  as the primary symbol. We leave out the details. \end{proof}

\medskip In practice,  we will reason informally within the system $\BS$ using arguments formalizable in the calculus.  
We specifically mention some provable statements  concerning defined connectives, which will frequently be needed in  later arguments.

\begin{Lem}  The following statements are provable in $\BS$:\end{Lem}

\begin{itemize}



\item $\varphi \leqqq \;  !\varphi$
\item $\no \varphi \leqqq \no ?\varphi$

 ($!$ talks only about {\it truth} and $?$ only about {\it faslity}.)

   \item $x=y\rightarrow ( \varphi(x)\Leftrightarrow \varphi(y))$

  (A true equality allows us to interchange terms).


   

\item ${\circ \varphi}\;\; \to\;\; ((\varphi \Leftrightarrow \;  !\varphi) \land (\varphi \Leftrightarrow  \; ?\varphi) \land (\no \varphi \Leftrightarrow \lnot \varphi ))$
\item ${\circ \varphi \land \circ \psi} \;\; \to\;\;( (\varphi \to \psi) \Leftrightarrow (\varphi \Rightarrow \psi))$
 
  (For {\it classical} formulas there is no distinctions between strong and weak implication, nor between native and classical negation, and $!$ and $?$ may be omitted).
 \end{itemize}

\section{Non-classical sets} \label{intuition}

Before delving into the axioms,  it is helpful to think about the concept of a set in a paraconsistent and paracomplete setting from a naive point of view. 
In classical ZFC, a {\it set} $x$ is identified with the class of its elements $ \{y : y \in x\}$ and divides the entire universe in two parts: those $y$ that are {\it in}  $x$, and those $y$ that are {\it not in} $x$.  In the context of paraconsistent and paracomplete logic, we can have a situation where a set $y$ is both in $x$ and not  in $x$, or a situation where $y$ is neither in $x$, nor is it the case that $y$ is not in $x$. 

\bigskip Therefore, it seems natural to  identify a set $x$ with the  pair consisting of a {\it positive} extension (those $y$ for which ``$y \in x$'' is {\it true}) and a {\it negative} extension (those $y$ for which ``$y\in x$'' is false),  without the added requirement that one be the complement of the other.   In fact, it makes sense to call a set {\it consistent} if its positive and negative extensions do not intersect, {\it complete} if their union is the whole universe, and  {\it classical} if it is both consistent and complete.

There is, however, an asymmetry here: the positive extension is a set, while the negative extension is a proper class.  Therefore,  it turns out to be more appropriate to talk about the {\it complement} of the negative extension, i.e., those $y$  for which the statement ``$y \in x$'' is {\it not  false} (or,  equivalently for which the statement  ``$y \notin x$'' is {\it not true}). We will  refer to this collection as the  ?-{\it extension} of $x$. 

Although we have just referred to  statements being {\it true} or {\it false}, which are seemingly  meta-theoretic notions, recall that the operators $!$ and $?$ allow us to discuss truth and falsity from {\it within} the system as well.  In particular,   $!(y\in x)$  is {\it true} if and only if $y\in x$ is {\it true},  and $?(y \in x)$  is {\it true} if and only if $y \in x$ is {\it not false}.    This motivates the following:

\begin{Def} Let $x$ be a set: \begin{itemize}

\item The !-{\it extension} of $x$ is
$$x^! := \{y  \;: \; \; !(y \in x)\}$$
\item The ?-{\it extension} of $x$  is 
$$x^? := \{y \; :\;  \; ?(y \in x)\}$$
\end{itemize}
\end{Def}

\p
For the time being,  it is not clear that the above collections describe sets and not proper classes, but we shall set up the axiomatic framework in such a way that if $x$ is a set, then both $x^!$ and $x^?$ are sets. 
The four boolean combinations of  $x^!$ and $x^?$ determine the  classes consisting of all $y$ for which the statement ``$y \in x$'' has one of the four possible truth-values, as  visualized in Figure \ref{balls}. 

\bigskip We remark that the property of sets being complete, consistent and classical can  be expressed within the system. In fact the following holds:



\begin{itemize}

\item $x$ is {\it complete} iff $\forall y \; (y \in x^? \to y \in x^!)$

\item $x$ is {\it consistent} iff $\forall y \; (y \in x^! \to y \in x^?)$

\item $x$ is {\it classical} iff $\forall y \; (y \in x^! \leqqq y \in x^?)$




\end{itemize}

Classical sets behave as we are used to in $\ZFC$, i.e.,  the $!$-extension is exactly the $?$-extension,  and consequently it does not matter whether we consider the native negation $\sim$ or classical negation $\neg$, nor whether we use $\to$ or $\Rightarrow$.  It is not hard to see that $x^!$ and $x^?$ are themselves classical.

   
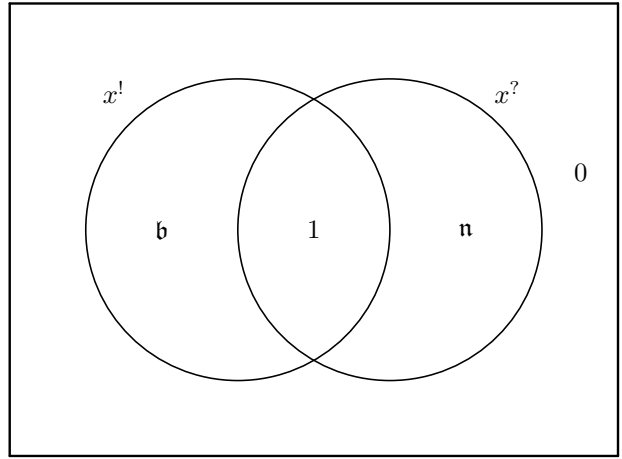
\begin{figure}[h]
\centering
\begin{tikzpicture}[line cap=round,line join=round,>=triangle 45,x=1cm,y=1cm]
\clip(-4.7,-3.3) rectangle (4.4,3.6);
\draw [line width=0.6pt] (-1,0) circle (2cm);
\draw [line width=0.6pt] (1,0) circle (2cm);
\draw [line width=0.9pt] (-4,3)-- (4,3);
\draw [line width=0.9pt] (4,3)-- (4,-3);
\draw [line width=0.9pt] (4,-3)-- (-4,-3);
\draw [line width=0.9pt] (-4,-3)-- (-4,3);
\draw (-2.9,2.1) node[anchor=north west] {$x^!$};
\draw (2.25,2.1) node[anchor=north west] {$x^?$};
\draw (-2,0) node {$\mathfrak{b}$};
\draw (0,0) node {1};
\draw (2,0) node {$\mathfrak{n}$};
\draw (3.3,1) node[anchor=north west] {0};
\end{tikzpicture}
\caption{The  four truth values of ``$y \in x$'' depending on the boolean combination of $x^!$ and $x^?$. } 
\label{balls}
\end{figure}

Let us now look more closely at the notion of {\it equality} of two sets and the extensionality principle.  
In ZFC,   two sets are equal precisely if the classes of their elements are the same; equivalently, two sets are different precisely if one set contains an element which the other does not, or vice versa.

In our setting,   $x=y$ and $x \neq y$\footnote{Again, we write $x\neq y$ instead of $\no (x=y)$.} could both be true statements or it could happen that neither $x=y$ nor $x \neq y$ are true statements.  But we still require that  ``$x=y$''  express the idea  that $x$ and $y$ are  the same set-theoretic objects, meaning that both its !-extension and ?-extension must be  the same. Likewise,  we  still want ``$x \neq y$'' to express that there is something in $x$ which is not in $y$, or vice versa.  So the guiding principle behind the extensionality axiom must be the following: 

\vbox{

\begin{itemize}
\item $x = y \;\; \leqqq \;\;  (x^! =  y^! \land x^? = y^?)$

\item $x \neq y \;\; \leqqq \;\;   \exists z ((z \in x \land z\notin y) \lor (z \in y \land z\notin x))$
\end{itemize}

}

Figure \ref{figgy} illustrates the situation in which two sets $x$ and $y$ are equal (because their !-extensions and ?-extensions coincide) but also unequal (because there is a set $z$ such that $z \in x$ but $z \notin y$).  In fact,   if $x$ is any inconsistent set, then $x=x$ and $x\neq x$.

\begin{figure}[h]
\centering
\begin{tikzpicture}[line cap=round,line join=round,>=triangle 45,x=1cm,y=1cm]
\clip(-4.7,-3.3) rectangle (4.4,3.6);
\draw [line width=0.6pt] (-1,0) circle (2cm);
\draw [line width=0.6pt] (1,0) circle (2cm);
\draw [line width=0.9pt] (-4,3)-- (4,3);
\draw [line width=0.9pt] (4,3)-- (4,-3);
\draw [line width=0.9pt] (4,-3)-- (-4,-3);
\draw [line width=0.9pt] (-4,-3)-- (-4,3);
\draw (-2.9,2.399204957738897) node[anchor=north west] {$x^! = y^!$};
\draw (2.0114062385585285,2.432706796198534) node[anchor=north west] {$x^?  = y^?$};
\draw (-2.1,0) node {$\bullet$};
\draw (-1.5,-0.4) node {$z \in x$};
\draw (-1.5,-0.9) node {$z \notin y$};

\end{tikzpicture}
\caption{$x=y$ and $x \neq y$.} 
\label{figgy}

\end{figure}
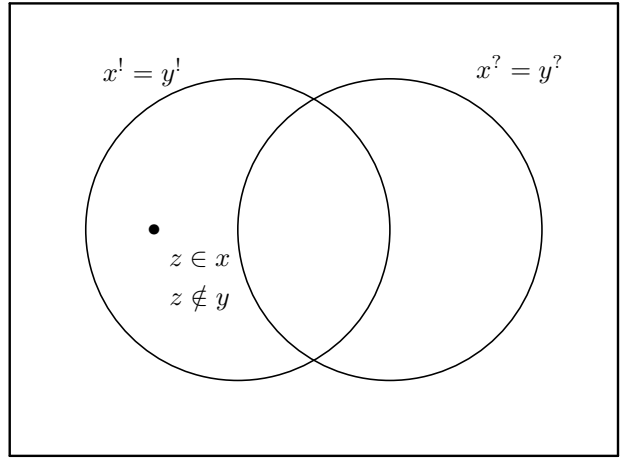

\bigskip It is customary in set theory to use notation such as $\{y : \varphi(y)\}$ to refer to the set (if it exists) of all objects $y$ satisfying property $\varphi$.  In our setting, where a set is determined by its !-extension and ?-extension, it makes sense to agree on the following:

\begin{Convention} \label{conny}

$$x = \{y : \varphi(y)\} \;\;\;\; \text{  abbreviates } \;\;\;\; \forall y \; (y \in x \; \Leftrightarrow \; \varphi(y)).$$

\end{Convention}

\p The  use of the strong implication means that $x^!$ is the set of all $y$ for which $\varphi(y)$ is {\it true}, and $x^?$ is the set of all $y$ for which $ \varphi(y)$ is {\it not false}.

Now it should be clear why the system under consideration cannot avoid Russell's paradox, since $R := \{x :  \lnot (x \in x)\}$ cannot be a set by the usual argument.    Likewise, the universe of all sets does not form a set.

\bigskip A last point of subtlety should be discussed: how do we understand notation such as $\{u\}$, for a set $u$? One might initially assume that this is  $\{y : y = u\}$ (referring to Convention \ref{conny}). However,  a closer look reveals the following: if  $\{y : y = u\}$  is  a set, then we would like its $?$-extension to also be a set.  However,  a bit of work following the definitions shows that this $?$-extension would be the collection of all sets $a$ such that  $a^! \subseteq u^?$ and $u^! \subseteq a^?$.   Since there is no upper bound on $a^?$, it would seem (following classical intuition) that there are class-many  sets $a$ satisfying this condition, meaning that the ?-extension of this set is, in fact, a proper class.\footnote{We will provide a proper proof of this result in Lemma \ref{properclass}} 

Instead, we  opt for the following:

\begin{Convention} \label{conny2} If $u$ is a set, then $\{u\}$ is the set $\{y \: : \: !(y=u)\}$; and more generally if $u_0, \dots, u_n$ are sets, then   $\{u_0, \dots, u_n\}$ is the set $ \{y \:: \: \: !(y=u_0) \lor \dots \lor !(y=u_n)\}$.  This is always a {\it classical} set whose !-extension and ?-extension are exactly the finite set consisting of the elements in question. %

\end{Convention}
A similar phenomenon occurs with other set-theoretic operations, most notably the power set operation.

\section{The theory $\PZFC$} \label{pzfc}

We  now introduce the intermediate system $\PZFC$ by carefully analysing the standard axioms of set theory and generalizing them in  accordance with the intuition described in the previous section. 

\subsection{Extensionality}
 
The  most essential axiom required to sustain an ontology of sets is the following: 


$$\textbf{Extensionality: }\;\;\;\;\; \boldsymbol{ \forall x \forall y ( x=y  \; \Leftrightarrow \;  \forall z(z\in x\Leftrightarrow z\in y))} .$$
 
\p Why do we choose  strong rather than weak bi-implications? For the first one the choice is clear: extensionality seeks to define the {\it meaning} of the expression ``$x=y$'' in terms of the elements of $x$ and $y$,  and this  needs to talk both about {\it truth} and {\it falsity}.  Notice that the use of this strong implications allows us to {\it interchange} the expression ``$x=y$'' with the expression on the right-hand side within any given formula.

 The second bi-implication is more interesting: recall that we want $x=y$ to express that both the !-extension and ?-extension of $x$ and $y$ coincide. The statement with a weak implication $  \forall z(z\in x\leftrightarrow z\in y))$ would only say that the !-extensions coincide.


\bigskip At this point it is instructive to define subsets:

$$x \subseteq y \abbr \forall z \;(z \in x \Rightarrow z \in y)$$

\p
Again, the use of the strong implication  makes sure that $x \subseteq y$ says that  both the !-extensions and ?-extensions of $x$ are included in that of $y$.  In particular, the Axiom of Extensionality can now be reformulated as follows: $\forall x \forall y ( x=y  \; \Leftrightarrow \;  x \subseteq y \land y \subseteq x)$. 

\subsection{Comprehension} The next most important axiom is Comprehension.\footnote{We  suppress mention of parameters  to simplify notation.}

$$\textbf{Comprehension:} \;\;\; \boldsymbol{ \forall u\exists x \forall y \;  (y\in x \Leftrightarrow y\in u \land\varphi(y))}$$

\p The use of the strong implication means that,  in accordance to Convention \ref{conny},  for any set $u$, the following is also a set:   $$x= \{y \in u : \varphi(y)\}.$$

\subsection{Classical Supersets}

The development of our theory is greatly simplified by considering an  axiom postulating  that every set is contained within a classical superset.  In Section \ref{intuition} we said that a set was {\it classical} if its $!$-extension and $?$-extension are the same. We do not know yet whether !-extensions and $?$-extensions are sets, but we can express that $C$ is a classical set with the sentence $\forall y \; {\circ} (y \in C)$, where $\circ$ is the classicality operator defined in Section \ref{defined}.

$$\textbf{Classical Superset:} \;\;\; \boldsymbol{ \forall x\: \exists  C \: (x \subseteq C \;\land \; \forall y \;{ \circ}(y\in C))}$$

This axiom is technically superfluous, since it can be proved to follow from the remaining axioms in a roundabout way.  However, adopting it at this stage allows us to frame the remaining axioms, and the theory in general, in a more intuitive way.  In particular, it allows us to confirm several properties of sets we postulated earlier.


\begin{Lem}[Cl. Superset + Comprehension] If $x$ is a set, then its !-extension and ?-extension are sets.  \label{extension} \end{Lem}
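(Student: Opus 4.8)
The plan is to use the Classical Superset axiom to trap the $!$- and $?$-extensions of $x$ inside a classical set, and then carve them out using Comprehension. First I would fix a set $x$ and apply the Classical Superset axiom to obtain a classical set $C$ with $x \subseteq C$, i.e.\ $\forall z\,(z \in x \Rightarrow z \in C)$ together with $\forall y\,{\circ}(y \in C)$. The key observation is that $x^! = \{y : {!}(y \in x)\}$ and $x^? = \{y : {?}(y \in x)\}$ are both \emph{subclasses} of $C$: if ${!}(y \in x)$ is true then $y \in x$ is true, and since $z \in x \Rightarrow z \in C$, we get $y \in C$; similarly if ${?}(y \in x)$ is true then $y \in x$ is not false, and the strong implication $z \in x \Rightarrow z \in C$ (whose contrapositive-direction conjunct is ${\sim}(z\in C)\rightarrow{\sim}(z\in x)$) forces $y \in C$ as well. (Here one uses that $C$ being classical means $y \in C$ cannot be $\bb$ or $\nn$, so "not false" and "true" coincide for membership in $C$.)

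Having localised both extensions inside $C$, I would apply Comprehension with $u := C$ and the defining formulas $\varphi_!(y) :\equiv {!}(y \in x)$ and $\varphi_?(y) :\equiv {?}(y \in x)$ respectively. Comprehension yields sets $x_1$ and $x_2$ with
$$\forall y\,(y \in x_1 \Leftrightarrow y \in C \land {!}(y \in x)) \quad\text{and}\quad \forall y\,(y \in x_2 \Leftrightarrow y \in C \land {?}(y \in x)).$$
Since ${!}$ and ${?}$ produce classical formulas, and $y \in C$ is classical, the right-hand sides are classical; moreover by the subclass observation above, $y \in C \land {!}(y \in x)$ has the same truth value as ${!}(y \in x)$ (when the latter is true, $y \in C$ is true; when it is false, the conjunction is false, matching $y\in x$'s $!$-extension since that extension is a subclass of $C$), and similarly for the $?$-case. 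Hence $x_1$ has $!$-extension and $?$-extension both equal to $x^!$, so $x_1 = x^!$ witnesses that $x^!$ is a set; likewise $x_2 = x^?$.

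The main obstacle, and the step requiring the most care, is the handling of the strong implication in $x \subseteq C$ when showing $x^? \subseteq C$: one must extract from $\forall z\,(z \in x \Rightarrow z \in C)$ not just the weak direction ($z\in x$ true $\Rightarrow z\in C$ true) but also the falsity-preservation, and then combine it with the classicality of $C$ to conclude that "$y \in x$ not false" implies "$y \in C$ true". This is exactly the kind of place where, as the authors stress, the distinction between $\to$ and $\Rightarrow$ is essential, and where a naive reading of the axioms would fail. Once this is in hand, the rest is a routine application of Comprehension together with the truth-table facts about ${!}$ and ${?}$ recorded in Table~\ref{ttnot} and the provable equivalences $\varphi \leqqq {!}\varphi$ and ${\sim}\varphi \leqqq {\sim}{?}\varphi$.
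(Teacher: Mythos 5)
Your proposal is correct and follows exactly the paper's argument: take a classical superset $C \supseteq x$ and apply Comprehension to $C$ with the formulas ${!}(y \in x)$ and ${?}(y \in x)$, so that $x^! = \{y \in C : {!}(y \in x)\}$ and $x^? = \{y \in C : {?}(y \in x)\}$. The paper leaves implicit the point you spell out carefully (that the falsity-preserving conjunct of $x \subseteq C$ plus classicality of $C$ puts the ${?}$-extension inside $C$), so your write-up is just a more detailed version of the same proof.
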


\begin{proof} Let $C$ be a classical set with $x \subseteq C$. Then

$$x^! \; = \; \{y \in C \; : \; !(y \in x)\}$$
$$x^? \; = \; \{y \in C \; : \; ?(y \in x)\}$$
 are both sets by the Comprehension axiom. \end{proof}

Since $x^!$ and $x^?$ are classical, {\it a posteriori} we see that a particularly convenient classical superset of $x$ is obtained by considering $x^! \cup x^?$. This is the  {\it smallest} classical set containing $x$ and we will   call this the  \emph{realm} of $x$:
$$\mathrm{rlm}(x) := x^! \cup x^?.$$





The next lemma elaborates on the meaning of equality and the subset relation between sets. 

\begin{Lem}[Cl. Superset + Extensionality + Comprehension]  \label{shwa}  $\;$

 \begin{enumerate}

\item $x \subseteq y \;\; \leqqq \;\; x^! \subseteq y^! \land x^? \subseteq y^? $

\item $x \not\subseteq y \;\; \leqqq \;\; \exists z (z \in x \land z\notin y) \;\; \leqqq \;\;  x^! \not\subseteq y^?$

\item $?(x \subseteq y) \;\; \leqqq \;\;   x^! \subseteq y^?$ 

\item $x = y \;\; \leqqq \;\;  x^! =  y^! \land x^? = y^?$

\item $x \neq y \;\; \leqqq \;\;   \exists z ((z \in x \land z\notin y) \lor (z \in y \land z\notin x)) \;\; \leqqq \;\; x^! \not\subseteq y^? \lor y^! \not\subseteq x^?$
 
\item $?(x = y) \;\; \leqqq \;\;  x^! \subseteq  y^? \land y^! \subseteq x^?$


\end{enumerate}

\end{Lem}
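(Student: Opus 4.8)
The plan is to prove each of the six equivalences by unwinding the definitions of $\subseteq$, $=$, $?$, and the $!$/$?$-extensions, working semantically in an arbitrary T/F-model and using the truth tables in Tables \ref{ttables} and \ref{ttnot}. Throughout I will use the facts, established in Lemma \ref{extension}, that $x^!$ and $x^?$ are (classical) sets, and that membership in a classical set is classical, so that for any classical $C$ and any $z$, the statement $z \in C$ takes only values $1$ or $0$; this is what lets me treat $x^! \subseteq y^!$ etc.\ as honest classical statements. The basic dictionary is: $z \in x^!$ is true iff $!(z \in x)$ is true iff $z \in x$ is true, and $z \in x^?$ is true iff $?(z \in x)$ is true iff $z \in x$ is not false. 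I would state this dictionary once at the start and then apply it repeatedly.

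I would then handle the items roughly in the printed order, reusing earlier items. For (1): $x \subseteq y$ abbreviates $\forall z (z \in x \Rightarrow z \in y)$, and $z \in x \Rightarrow z \in y$ is $(z\in x \to z \in y) \land ({\sim}(z \in y) \to {\sim}(z\in x))$; being a strong implication, it is true iff ``$z \in x$ true implies $z \in y$ true'' \emph{and} ``$z \in y$ false implies $z \in x$ false'', equivalently (taking contrapositive of the second conjunct) ``$z\in x$ not false implies $z \in y$ not false''. Quantifying over all $z$, the first clause says $x^! \subseteq y^!$ and the second says $x^? \subseteq y^?$, giving (1). For (2): negating, $x \not\subseteq y$ is true iff the strong implication fails for some $z$, i.e.\ for some $z$ either $z\in x$ is true but $z \in y$ is not true, or $z \in y$ is false but $z \in x$ is not false; a short case analysis on the four truth values of $z \in x$ and $z \in y$ shows both disjuncts reduce to the single condition ``$z \in x$ true and $z \in y$ false'', i.e.\ $\exists z(z \in x \land z \notin y)$, which says exactly $x^! \not\subseteq y^?$. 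For (3): $?(x \subseteq y)$ is true iff $x \subseteq y$ is not false; using axiom schema 18 of $\BS$, ${\sim}(\varphi \to \psi) \leftrightarrow (\varphi \land {\sim}\psi)$ together with the definition of $\Rightarrow$ and De Morgan, one computes that $x \subseteq y$ is false iff for some $z$, $z \in x$ is true and $z\in y$ is false — wait, one must be careful: $x\subseteq y$ is a universally quantified conjunction of native implications, and its falsity (via clauses 9, 4, 6 of Definition \ref{mainsemantics}) amounts to: there is $z$ such that $(z\in x \to z\in y)$ is false or $({\sim}(z\in y)\to {\sim}(z\in x))$ is false, i.e.\ ($z\in x$ true and $z \in y$ false) or ($z \in y$ false and $z \in x$ true) — the same thing. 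Hence $x \subseteq y$ is \emph{false} under the same condition that makes it \emph{not true}, so $?(x\subseteq y)$ holds iff no such $z$ exists, which is $x^! \subseteq y^?$; I will double-check this against the truth table of $\to$ to make sure falsity and non-truth really coincide for this formula. For (4)--(6): (4) is immediate from Extensionality, $x = y \Leftrightarrow x \subseteq y \land y \subseteq x$, combined with (1); (5) follows by negating (4) via the De Morgan/negation axioms and applying (2) symmetrically; (6) follows from $?(x=y)$, the reformulation of $=$ as $\subseteq \land \subseteq$, the fact that $?(\varphi\land\psi)$ behaves well (since $?$ produces classical formulas and $? (\varphi \land \psi) \leftrightarrow {\sim}{\sim}(\varphi\land\psi)\to\bot$ unwinds to $?\varphi \land ?\psi$ — this needs a small check using the $\land$-falsity clause), and applying (3) to each conjunct.

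The main obstacle I anticipate is item (3) (and correspondingly the $?$-parts of (5)--(6)): one must verify precisely that the \emph{falsity} condition and the \emph{non-truth} condition for the formula $x \subseteq y$ coincide, because in $\BS$ a formula can be neither true nor false, so in general ``$?\varphi$'' (not false) is weaker than ``$\varphi$ true''. Here it works out because $x \subseteq y$ is built from native implications whose truth tables (row by row) happen to force falsity exactly when non-truth occurs \emph{in the relevant cases}, but the bookkeeping across the universal quantifier and the conjunction inside $\Rightarrow$ is where a careless computation could go wrong. The safest route is to compute, for a fixed $z$, the truth value of $z \in x \Rightarrow z \in y$ as a function of the pair of truth values $(\llbracket z \in x\rrbracket, \llbracket z\in y\rrbracket) \in \{1,\bb,\nn,0\}^2$ — a $16$-entry table — read off when it is ``true'' and when it is ``not false'', and then apply clauses 9 and 10 of Definition \ref{mainsemantics} for $\forall$; everything else is then routine propagation through Extensionality and the negation axioms.
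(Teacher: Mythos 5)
Your overall route---unwinding the truth/falsity clauses for $\Rightarrow$, $\no$, $!$, $?$ and the quantifiers, with the dictionary ``$z\in x^!$ iff $z\in x$ is true'' and ``$z\in x^?$ iff $z\in x$ is not false''---is in substance the computation the paper carries out (the paper phrases it as a chain of $\BS$-provable bi-implications, proves items 1 and 2 in detail and leaves 3--6 to the reader; you do the same bookkeeping semantically over T/F-models, which is legitimate via Lemma \ref{completeness}). However, there is one genuinely wrong step in your treatment of (2) and (3). Since $\M \models^T \no\varphi$ iff $\M \models^F \varphi$, the truth of $x \not\subseteq y$ is governed by the \emph{falsity} clauses for $\forall$, $\land$, $\to$, not by non-truth. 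In (2) you instead write down the non-truth condition of the strong implication (``$z\in x$ true but $z\in y$ not true, or $z\in y$ false but $z\in x$ not false'') and claim a case analysis reduces both disjuncts to ``$z\in x$ true and $z\in y$ false''. That reduction fails: with $\llbracket z\in x\rrbracket = 1$ and $\llbracket z\in y\rrbracket = \nn$ the first disjunct holds while the target condition does not (and the pair $(\nn,0)$ defeats the second disjunct). Likewise your assertion in (3) that falsity and non-truth of $x\subseteq y$ coincide is false: in the same situation $x\subseteq y$ takes the value $\nn$, being neither true nor false, so the planned ``double-check against the truth table'' would in fact refute it.

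The good news is that none of this is needed. The falsity condition, which you do compute correctly in the middle of (3) via clauses 9, 4, 6 of Definition \ref{mainsemantics} (falsity of $z\in x \to z\in y$ is ``$z\in x$ true and $z\in y$ false'', and falsity of $\no(z\in y)\to\no(z\in x)$ is again the same condition), is exactly ``there is $z$ with $z\in x$ true and $z\in y$ false'', i.e.\ $x^! \not\subseteq y^?$. Items (2) and (3), and the $\neq$/$?$ parts of (5)--(6), follow from that alone, because $\no$ and $?$ only ever interrogate falsity; non-truth never enters. So the repair is simply to delete the non-truth detour and the claimed coincidence and argue (2), (3) directly from the $\models^F$ clauses; the rest of your plan---item (1) as you describe it, and (4)--(6) from Extensionality together with (1)--(3), including the correct observation that $?(\varphi\land\psi) \leqqq {?\varphi}\land{?\psi}$---then goes through exactly as the paper intends.
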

 
 \begin{proof} We will provide detailed proofs of 1 and 2 in order to illustrate how reasoning within $\BS$ works.  The remaining proofs are left to the reader.
 
 \begin{enumerate}

 \item We have the following sequence of $\BS$-provable bi-implications:

\noindent $x \subseteq y$     \hspace{0.2cm}$\Leftrightarrow$   \hspace{0.2cm} $ \forall z ( (z \in x \to z \in y)   \land (\no (z \in y) \to \no (z \in x))$     \hspace{0.2cm}$\stackrel{(*)}{\leqqq }$
 
 \medskip

\noindent  $\forall z ( (!(z \in x)  \to !(z \in y))  \land  (? (z \in x) \to ? (z \in y))$

 \medskip
 
\noindent  $\stackrel{(**)}{\Leftrightarrow}$      \hspace{0.2cm}$ \forall z (( z \in x^! \Rightarrow z \in y^!) \land (z \in x^? \Rightarrow z \in y^?)$    \hspace{0.2cm}   $ \Leftrightarrow$   \hspace{0.2cm} $x^! \subseteq y^! \land x^? \subseteq y^? $.
 
 \smallskip \p Here $(*)$ is due to the truth-functional definition of the implications  and the $!$ and $?$ operators, while $(**)$ is due to the definition of $x^!, y^!, x^?, y^?$ and the fact that these sets are classical (so $\to $ and $\Rightarrow$ are interchangeable). The first and last strong bi-implication is the definition of $\subseteq$.
 
 \item We have the following sequence of bi-implications:
 
\noindent $\no (x \subseteq y)$ 
    $\Leftrightarrow$      
 $ \no \forall z (z \in x \Rightarrow z \in y)   $  
    $\Leftrightarrow$    
 $   \exists z \no (z \in x \Rightarrow  z\in y)   $ 
    $\stackrel{(*)}{\leqqq }$    
   $   \exists z (z \in x \land z \notin y) $
 
 where $(*)$ is because we are only looking at the {\it truth} condition of $\Rightarrow$.  Further:
 
\noindent $   \dots $
  $\stackrel{(**)}{\leqqq }$
 $   \exists z (z \in x^! \land z \notin y^?) $
   $\Leftrightarrow$
  $  x^! \not\subseteq y^?$,
  where $(**)$ is again due to the fact that $!$ refers to truth and $?$ to falsity. \qedhere
 
 \end{enumerate}
 
 \end{proof}

\subsection{Replacement} The next axiom we consider is Replacement.\footnote{As before, we  suppress mention of parameters to simplify notation.} In $\ZFC$ the Replacement axiom tells us that the image of a set under a class function is itself a set.  It is not immediately clear how to generalize {\it class function}.  As a guiding principle we rely on the intuition that in practice,  mathematicians apply  Replacement  when there is a pre-determined  recipe by which each element of a given set is {\it replaced} by another element in a non-ambiguous way.  We will call such a recipe an {\it operation}:

\begin{Def} \label{operation} An {\it operation} is a formula $\varphi(x,y)$  such that \begin{enumerate}

\item $\varphi$ is classical, and
\item $\forall x \exists y(\varphi(x,y) \; \land \; \forall z \;(\varphi(x,z) \to !(y=z))$.

\end{enumerate}
 \end{Def}
The requirement on  $\varphi$   to be a classical formula reflects the notion that an operation describes a well-defined recipe for replacing  input $x$ with  output $y$. Likewise, the  ``$!$'' makes sure that the statement expressing that ``every input has at most one output'' is a classical sentence, since we would not know how to interpret a situation in which this statement   is both true and false.\footnote{One can check that this will occur, for example, with the formula $\varphi(x,y) \; \equiv \; !(y=a)$ where $a$ is any non-classical set.} 
$$\textbf{Replacement:} \;\;\; \boldsymbol{ ({\circ} \varphi  \land \forall x \exists y (\varphi(x,y) \land \forall z (\varphi(x,z) \to !(y=z)) )  \to }$$
 
\hspace{3.7cm} $\boldsymbol{\forall x \exists y \forall z (z \in y \Leftrightarrow \exists w (w \in x \land \varphi(w,z)))}$

\p After adopting this axiom, we can treat an operation $\varphi$ as a class function $F$, and use notation such as 
$$F[X] := \{y : \exists x  \;(x \in X  \land \varphi(x,y)\}$$ where $X$ is a set.  

\subsection{Pairing} The Pairing axiom is normally needed to get set-theory `started' and allow the definition of ordered pairs, relations, functions, and so on. Recall the discussion in Convention \ref{conny2}   that  we want  notation such as $\{u,v\}$ to stand for the {\it classical} set whose !-extension and ?-extension contain exactly the two objects $u$ and $v$.  This motivates the following

$$\textbf{Pairing:} \;\;\; \boldsymbol{ \forall u   \forall v  \exists x \forall y  ( y \in x \: \Leftrightarrow  \;   ( !(y=u) \lor !(y=v)))}$$

As discussed in Section \ref{intuition}, this falls in line with the intuition of an unordered pair $\{u,v\}$ as a classical set.  Referring to Lemma \ref{shwa}. (5) and (6), we now see that, if we had used the definition $\{x : x=u \lor x=v\}$ for the unordered pair,  then the ?-extension would have been the collection of all $x$ such that $x^! \subseteq u^?$ and $u^! \subseteq x^?$, or $x^! \subseteq v^?$ and $v^! \subseteq x^?$.  This is problematic since there is no upper bound on the size of $x^?$.  Indeed,  in Lemma \ref{properclass} we will prove that if there exists at least one incomplete set,  then $\{x : x=u \lor x=v\}$ is a proper class.

\subsection{Power Set} Suppose $u$ is any set and we look at  $\PP(u) := \{x : x \subseteq u\}$. Referring to Lemma \ref{shwa}, we again notice that $?(x\subseteq u) \;\; \leqqq \;\; x^! \subseteq u^?$, so the ?-extension of $\PP(u)$ consists of  sets $x$ such that $x^! \subseteq u^?$, but with no special requirement on $x^?$. Again, it should be intuitively clear that there is a proper class of possible $x$ satisfying this requirement.  As with Pairing, we instead opt for the following definition:

$$\PP^!(u) := \{x : \; !(x \subseteq u)\}$$

\p Now $\PP^!(u)$ is a classical set containing exactly those sets $x$ for which $x^! \subseteq u^!$ and $x^? \subseteq u^?$.  This  motivates the axiom:

$$\textbf{Power Set:} \;\;\; \boldsymbol{ \forall u  \exists v  \forall x   ( x \in v \: \Leftrightarrow  \; \;  !(x \subseteq u))    }$$


\subsection{The remaining axioms} We will now list the remaining four axioms since they are, mostly, non-problematic. 

$$\textbf{Union:} \;\;\; \boldsymbol{ \forall u \exists x \forall y (y \in x  \: \Leftrightarrow  \;  \exists z \: (y \in z \land z \in u)}$$
  After adopting this axiom  we can use the abbreviation $\bigcup u := \{x : \exists z \: (y \in z \land z\in u)\}$, and this coincides with Convention \ref{conny}.  

 $$\textbf{Infinity:} \;\;\; \boldsymbol{ \exists x \; ( \varnothing \in x \; \land \forall y (y \in x \to y \cup \{y\} \in x)}$$
$$\textbf{Foundation:} \;\;\; \boldsymbol{ \forall x  ( \forall y  ( y \in x^! \cup x^? \to \varphi(y) ) \to \varphi(x)) \;\; \to \;\; \forall u \varphi(u)}$$
 This axiom could more properly be called ``set induction schema''. It allows us to view the universe as being constructed by transfinite recursion, where each new level consists of those $x$ for which the {\it realm}  $x^! \cup x^?$ is a subset of the previous level. See \cite[Section 4.12]{HrafnThesis} for details.
 
\vbox{$$\textbf{Choice:} \;\;\; \boldsymbol{          \forall u ( \forall x (x \in u \to \exists y (y \in x)) \; \to }  \boldsymbol{ \; \exists f ( \dom(f)=u} $$ 
$$\hspace{2cm}\boldsymbol{\land \; \forall x  (x \in u \to f(x) \in x) ) )     }$$}

 This is a standard formulation of the axiom of choice, however, it requires the concept of a {\it function} which we have not properly defined yet.  This will be done in Section \ref{math}, however the current axiom is not required for that definition. 

\section{The anti-classicality axiom and $\BZFC$}  \label{bzfc}

None of the axioms of $\PZFC$ guarantee the existence of an inconsistent or incomplete set. In fact, the following should be clear:

\begin{Thm} $\PZFC \: + \: \forall x  (x^!=x^?)$ is equivalent to $\ZFC$. \end{Thm}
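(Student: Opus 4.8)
The plan is to show the two theories are equivalent by producing, in each direction, a translation of formulas and verifying that it preserves provability. The key observation is that the extra axiom $\forall x\,(x^!=x^?)$ asserts exactly that every set is classical, which by the definitions in Section \ref{intuition} means $\forall x\,\forall y\,\circ(y\in x)$; from this I expect to derive that \emph{every} formula becomes classical, so that the distinctions between $\sim$ and $\neg$, between $\to$ and $\Rightarrow$, and between $\varphi$ and $!\varphi$ all collapse, and the $\BS$-semantics degenerates to ordinary two-valued semantics.

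First I would work inside $\PZFC+\forall x\,(x^!=x^?)$ and prove by induction on formula complexity that every formula $\varphi$ (in the language $\{\in,=\}$, with atomic equality handled via Extensionality) satisfies $\circ\varphi$, equivalently $\varphi\Leftrightarrow\neg\neg\varphi$ and $\sim\varphi\Leftrightarrow\neg\varphi$. The atomic case $\circ(y\in x)$ is immediate from the hypothesis; for $x=y$ one uses Extensionality together with the fact (from Lemma \ref{shwa}, or directly) that $x^!=x^!\cup x^?=x^?$ makes equality classical; the inductive steps for $\sim,\wedge,\vee,\to,\exists,\forall$ use the last bullet points of the Lemma after Lemma \ref{completeness}, i.e. that classicality is preserved by the connectives and quantifiers. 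Having this, each $\PZFC$-axiom, when we replace every $\Leftrightarrow$ by $\leftrightarrow$ and every $!(\dots)$ and $\circ(\dots)$ by the plain formula, becomes (provably, in this theory) equivalent to its statement, and these plain statements are exactly the $\ZFC$-axioms: Extensionality, Comprehension (Separation), Replacement, Pairing — here $!(y=u)\lor!(y=v)$ collapses to $y=u\lor y=v$ so Convention \ref{conny2}'s pair agrees with the usual one — Power Set, Union, Infinity, Foundation (the realm $x^!\cup x^?$ is just the set of elements of $x$, so this is ordinary $\in$-induction), and Choice. So $\PZFC+\forall x\,(x^!=x^?)$ proves (a classical-logic reading of) every $\ZFC$-axiom, and since its logic is classical on classical formulas, it interprets $\ZFC$.

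For the converse I would go the other way: given a model $M\models\ZFC$, turn it into a T/F-model $\mathcal M$ by setting $(\in^\mathcal M)^+=(\in^\mathcal M)^-{}^{c}$ to be the classical membership relation and its complement — more precisely $(\in^\mathcal M)^+ = \in^M$ and $(\in^\mathcal M)^- = M^2\setminus{\in^M}$, and similarly for $=$ — so that every atomic formula gets value $1$ or $0$; then by the induction above every formula is two-valued in $\mathcal M$, the $\BS$-semantics agrees with classical Tarski semantics, so $\mathcal M\models^T\varphi$ iff $M\models\varphi$. Since $M$ satisfies $\ZFC$, $\mathcal M$ satisfies each $\PZFC$-axiom (each being, under this translation, one of the $\ZFC$-axioms) and visibly $\mathcal M\models^T\forall x\,(x^!=x^?)$ because $x^!=x^?=\{y:y\in x\}$ there. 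By completeness of $\ZFC$ over its models this gives $\ZFC\vdash\theta$ for every $\BS$-theorem $\theta$ of $\PZFC+\forall x\,(x^!=x^?)$, closing the loop.

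The main obstacle I anticipate is the bookkeeping around equality and the strong versus weak connectives: one must be careful that the atomic formula $t=s$ is handled correctly by the induction (its truth clause is $a=^+b$, its falsity clause $a=^-b$, and one needs $\forall x\,(x^!=x^?)$ together with Extensionality to force $=^-$ to be the complement of $=^+$, not merely a symmetric relation), and that every occurrence of $\Leftrightarrow$, $!$, $?$, $\circ$ in the $\PZFC$-axioms really does collapse to its classical counterpart in this setting — this is exactly where the final three bullet points of the lemma following Lemma \ref{completeness} are used, and where a sloppy treatment would leave a gap. The set-theoretic content (that the collapsed axioms are literally the $\ZFC$-axioms) is routine once the logical collapse is established; I would also note the caveat that ``equivalent'' here is most naturally read as mutual interpretability / same theorems modulo the translation, which is the sense in which the statement is true and which the subsequent bi-interpretability sections (Sections \ref{translation}, \ref{biinterpretability}) make precise.
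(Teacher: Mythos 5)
Your proposal is correct, and its core is exactly the paper's argument: once $\forall x\,(x^!=x^?)$ is assumed, every formula is classical, so $\no$ collapses to $\lnot$, $\to$ to $\Rightarrow$, $!$ and $?$ can be dropped, Classical Superset becomes trivial, and the remaining axioms of $\PZFC$ are literally the $\ZFC$ axioms. The paper leaves it at that one observation, because after the collapse the two axiom systems coincide syntactically and $\BS$ restricted to classical formulas just is classical logic, so both containments are immediate without any further work. Where you differ is in the converse direction: you route it semantically, turning an arbitrary model of $\ZFC$ into a two-valued T/F-model (with $\in^-$ and $=^-$ the complements), checking it satisfies $\PZFC+\forall x\,(x^!=x^?)$, and then invoking soundness of $\BS$ together with G\"odel completeness for $\ZFC$. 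That detour is sound (and has the side benefit of making the ``mutual interpretability'' reading of ``equivalent'' completely explicit, as well as previewing the model constructions of Sections \ref{model}--\ref{biinterpretability}), but it buys nothing the syntactic collapse doesn't already give, and it imports completeness machinery where the paper needs none. Your explicit induction showing $\circ\varphi$ for every formula, with atomic equality handled via Extensionality, is the right way to make the paper's ``there is no distinction'' claim precise, and your anticipated bookkeeping points (the $=^-$ clause, the collapse of $\Leftrightarrow$, $!$, $?$, $\circ$ in each axiom, the realm in Foundation becoming ordinary $\in$-induction) are exactly the places where detail is needed.
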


\begin{proof} If every set is classical, then there is no distinction between $\no$ and $\lnot$,  nor between $\to$ and $\Rightarrow$. Likewise, $!$ and $?$ can  be discarded.  The  Classical Superset Axiom is trivial.  So what remains of $\PZFC$ is precisely the collection of $\ZFC$ axioms. \end{proof}

Since we are interested in  theories with non-classical sets, we would like to adopt an axiom postulating their existence.  Now we seem to be faced with  a choice:  exactly {\it which} non-classical sets do we want to postulate the existence of? Following a  conservative approach,  we might want to require only that {\it at least one} inconsistent and  {\it at least one}  incomplete set exists: 

$$\exists x (x^! \not\subseteq x^?) \; \land \; \exists x(x^? \not\subseteq x^!)$$ 

On the other hand, a  maximality approach might lead us to postulate the existence of {\it as many non-classical sets} as possible, i.e.,  for {\it any} classical sets $u$, $v$, there exists a set $x$ whose !-extension is exactly $u$ and whose ?-extension is exactly $v$

$$\forall u \forall v (u \text{ and } v \text{ classical } \; \to \; \exists x \:(x^! = u \land x^?=v)).$$

An unexpected, yet surprisingly simple result now shows that in the presence of the other $\PZFC$-axioms, any choice we make is equivalent, since the weakest of them (the conservative one) already implies the strongest (the maximizing one).  Indeed, this can be viewed as a very central theorem on which the rest of our theory hinges in a crucial way.

\begin{Thm}[$\PZFC$] \label{aclaacla} Suppose there is an inconsistent and an incomplete set. Then for any classical sets $u,v$, there is $x$ such that $x^!=u$ and $x^?=v$. \end{Thm}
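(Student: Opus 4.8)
The idea is to build the desired set $x$ by "grafting" a given inconsistency and a given incompleteness onto the classical data $u$ and $v$, using Replacement to collect the results. By hypothesis, fix an inconsistent set $a$, meaning $a^! \not\subseteq a^?$, so there is a witness $p$ with $p \in a$ and $p \notin a$ (i.e., $\llbracket p \in a \rrbracket = \bb$); and fix an incomplete set $b$, so there is a witness $q$ with $?(q \in b)$ false and $!(q \in b)$ false, i.e., $\llbracket q \in b\rrbracket = \nn$. The intuition is: for each element $y$ that should go into the positive extension of $x$ but not into the $?$-extension, we want $\llbracket y \in x\rrbracket = \bb$; for each $y$ that should be in the $?$-extension but not the $!$-extension, we want $\llbracket y \in x\rrbracket = \nn$; for $y$ in both we want $1$ and for $y$ in neither we want $0$.

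The plan is to first establish a one-element "gadget" lemma: for any set $y$, there is a set whose membership in it realizes a prescribed truth value. Concretely, I would show that the sets $y \wedge$-combined with the fixed witnesses behave correctly. The cleanest route is to use Comprehension and Pairing to manufacture, for the fixed $a,b,p,q$, sets $I$ and $N$ with $\llbracket z \in I \rrbracket = \bb$ for all relevant $z$ and $\llbracket z \in N\rrbracket = \nn$ for all relevant $z$ — e.g. $I := \{z \in C : p \in a\}$ where $C$ is a classical superset (Classical Superset), noting that $z \in I \Leftrightarrow (z \in C \wedge p \in a)$ has truth value $\bb$ exactly when $z \in C$ is true and $p\in a$ has value $\bb$; similarly $N$ using $q \in b$. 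Then, given the classical sets $u$, $v$ with $u \subseteq v$ is not assumed — but since $u$ is to be a $!$-extension and $v$ a $?$-extension and consistency is not required, actually no containment between $u$ and $v$ is needed; I must handle all four Boolean regions $u \cap v$, $u \setminus v$, $v \setminus u$, and the outside. The final $x$ is assembled as a union (Union + Pairing, or directly by Comprehension on a classical superset of $u \cup v$) of four pieces: on $u \cap v$ membership is plain true, on $u \setminus v$ it is $\bb$ (use the $I$-gadget), on $v \setminus u$ it is $\nn$ (use the $N$-gadget), outside it is plain false. One then checks $x^! = u$ and $x^? = v$ directly from the truth tables, using Lemma \ref{shwa} and the fact that $u, v$ are classical.

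Technically, the assembly is best done in one shot: take a classical superset $C \supseteq$ (a classical set containing $u$ and $v$, obtainable from $\mathrm{rlm}$ of $u \cup v$ which is classical since $u,v$ are), and define
$$x := \{ y \in C : (y \in u \wedge y \in v) \vee (y \in u \wedge p \in a) \vee (y \in v \wedge q \in b)\}$$
via Comprehension. For $y \in u \cap v$: the first disjunct is true and classical, the others have value $0$ (if $y \notin v$ then $y\in v$ is false and classical) — wait, need $y \in v$ here which is true, so second disjunct is $p \in a$ with value $\bb$; hmm, that would pollute. So I would instead split $u$ into the classical subsets $u \cap v$, $u \setminus v$ and likewise for $v$, which exist by Comprehension since $u,v$ classical, and define the three disjuncts using these disjoint pieces: $y \in (u\cap v)$, $(y \in (u \setminus v) \wedge p \in a)$, $(y \in (v \setminus u) \wedge q \in b)$. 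Then for each $y$ exactly one disjunct is "active" and the membership value is $1$, $\bb$, $\nn$, or $0$ as desired, whence $x^! = (u\cap v)\cup(u\setminus v) = u$ and $x^? = (u\cap v)\cup(v\setminus u) = v$.

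\textbf{Main obstacle.} The delicate point is purely bookkeeping with the four-valued truth tables: one must verify that a disjunction of the form $(y \in A) \vee (y \in B \wedge \beta)$, where $A, B$ are classical and disjoint and $\beta$ has value $\bb$ (resp. $\nn$), yields precisely value $\bb$ (resp. $\nn$) when $y \in B$, value $1$ when $y \in A$, and value $0$ otherwise — and crucially that these interact correctly under $\vee$ with a third disjunct (note $\bb \vee \nn = 1$ in Table \ref{ttables}, so the disjoint splitting of $u$ and $v$ into $u\cap v$, $u \setminus v$, $v \setminus u$ is genuinely needed to prevent a $y$ from simultaneously activating the $\bb$-disjunct and the $\nn$-disjunct). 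Once the pieces are arranged to be mutually exclusive, everything else — that $x$ is a set (Comprehension), that $C$ exists (Classical Superset), that $u \cap v$ etc. are sets (Comprehension, using classicality of $u,v$) — is routine, and the computation of $x^!$ and $x^?$ follows immediately from the definitions of the $!$- and $?$-extensions together with Lemma \ref{shwa}.
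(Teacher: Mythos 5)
Your construction is essentially identical to the paper's own proof: the paper also fixes a $\bb$-valued membership statement and an $\nn$-valued one from the hypothesized inconsistent and incomplete sets, and defines $x := \{z \in u \cup v : z \in (u\cap v) \lor (z \in (u\setminus v) \land \phi_\bb) \lor (z \in (v\setminus u) \land \phi_\nn)\}$, using exactly the disjoint decomposition you arrive at, then verifies $x^! = u$ and $x^? = v$ by the same truth-value computation. Your proposal is correct; the only cosmetic difference is routing the bounding set through a classical superset $C$ rather than using $u \cup v$ directly.
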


\begin{proof} From the assumption we have $a \in b \land a \notin b$ for some sets $a,b$ and also $\lnot (c \in d \lor c \notin d)$ for some sets $c,d$. Let us introduce the following abbreviation:
$$\phi_\bb \;\; \equiv \;\; a \in b$$
$$\phi_\nn \;\; \equiv \;\; c \in d$$
Note that $\phi_\bb$ and $\no \phi_\bb$ are both true, while $\phi_\nn$ and $\no \phi_\nn$ are both not true.

\p Let $u$ and $v$ be classical sets and define
$$x := \{z \in u \cup v\;  \;: \; \; z \in (u \cap v)  \; \lor \;(z \in (u\setminus v) \land \phi_\bb) \; \lor \; (z \in (v\setminus u) \land\phi_\nn)\}$$
Then we have: 
\begin{itemize} 
\item 
  $z\in x  $
 $\;\;\leftrightarrow\;\;$
  $z \in (u \cap v)  \; \lor \;(z \in (u\setminus v) \land\phi_\bb) \; \lor \; (z \in (v\setminus u) \land\phi_\nn)  $
 $\;\;\leftrightarrow\;\;$
 $z \in (u \cap v)  \; \lor \;z \in (u\setminus v)   $
 $\;\;\leftrightarrow\;\;$
  $z \in u$, and
  
  \item $z\notin x  $
 $\;\;\leftrightarrow\;\;$
  $z \notin (u \cap v)  \; \land \;(z \notin (u\setminus v) \lor \no \phi_\bb) \; \land \; (z \notin (v\setminus u) \lor \no \phi_\nn)  $
 $\;\;\leftrightarrow\;\;$
 $z \notin (u \cap v)    \; \land \; z \notin (v\setminus u)   $
 $\;\;\leftrightarrow\;\;$
  $z \notin v.  $ \end{itemize} 
   It follows that $z \in x^! \leqqq z \in u$ and $z \in x^? \leqqq z \in v$. This completes the proof. \end{proof}

\begin{Def} We let the {\it anti-classicality axiom} be the statement:
 $$\textbf{ACLA:} \;\;\; \boldsymbol{  \exists x (x^! \not\subseteq x^?) \; \land \; \exists x(x^? \not\subseteq x^!)}$$ 
  and consider the system 
 $$\BZFC \;\; \equiv \;\; \PZFC + \ACLA$$
 This is the main axiomatic system under consideration in this paper.
\end{Def}

\begin{Remark} \label{moregeneral} Some readers might be interested in a finer distinction and consider  a theory that has only incomplete but not inconsistent sets. The corresponding theory would then be    $\PZFC + \exists x (x^? \not\subseteq x^!) + \forall x (x^! \subseteq x^?)$.  A proof similar to the above would then show that this implies that for any classical sets $u,v$ with $u \subseteq v$, there is $x$ such that $x^! = u$ and $x^? = v$.  Likewise,  if one is interested in a theory   that has only inconsistent but not incomplete sets  one can look at $\PZFC + \exists x (x^! \not\subseteq x^?) + \forall x (x^? \subseteq x^!)$. This implies that for any classical sets $u,v$ with $v \subseteq u$, there is $x$ such that $x^! = u$ and $x^? = v$.  We will return to this finer distinction in Section \ref{modeltheory} in a specific context, but otherwise will not pursue it  in detail. 
\end{Remark}
 
 As a nice application of $\ACLA$,  we can show how to internally define truth values in a succinct way.  Let $\Omega := \PP^!(\{\varnothing\})$, i.e.,  the classical set containing sets $x$ such that $x^! \subseteq \{\varnothing\}$ and $x^? \subseteq \{\varnothing\}$.  There are four possible combinations for such $x$, and $\ACLA$ guarantees us that all four of them exist, and are members of $\Omega$.  We can give them names as follows:

\begin{itemize}

\item $x := 1\;\;\;\;$ if $\;\;\;\;x^! = x^? = \{\varnothing\}$
\item $x:=0\;\;\;\;$ if  $\;\;\;\;x^! = x^? = \varnothing$
\item $x:=\nn\;\;\;\;$ if $\;\;\;\;x^! = \varnothing$ and $x^? = \{\varnothing\}$
\item $x:=\bb\;\;\;\;$ if $\;\;\;\;x^! = \{\varnothing\}$ and $x^? = \varnothing$
\end{itemize}
Then $\Omega = \{1, \bb,\nn,0\}$ is called the {\it set of truth values}, and for any formula $\varphi$, we can define the {\it truth value} of $\varphi$ by:

$$\llbracket \varphi \rrbracket := \{\varnothing : \varphi\} $$We now have that $\varnothing \in \llbracket \varphi \rrbracket$ is true precisely if $\varphi$ is true, and false precisely if $\varphi$ is false. In other words, for any formula $\varphi$, from the point of view of the meta-theory, i.e.,  $\llbracket \;  \varnothing \in \llbracket \varphi \rrbracket \;  \rrbracket \; = \; \llbracket \varphi \rrbracket.$  

\section{Mathematics in $\BZFC$} \label{math} 

In a typical set theory textbook, the introduction of the $\ZFC$-axioms is usually followed up by developing the tools needed to sustain modern mathematics, e.g., ordered pairs, Cartesian products, relations, functions, induction and recursion,  ordinals, cardinals and their arithmetic.

In general, many non-trivial questions arise in the context of $\BZFC$, such as the exact nature of functions,  images and pre-images,  cardinality,  cardinal arithmetic and much more.  Many of these questions deserve separate investigation. In this paper we will present only  as much mathematical formalism as is necessary for the subsequent sections.   Readers  more interested in the foundational results can safely skip   to the next Section \ref{model}.

\bigskip  First, we would like to define an {\it ordered pair}  $(a,b)$   in such a way that $$(a,b) = (a',b') \; \Leftrightarrow \;(a=a\ \land b=b'). \hspace{2cm} (*)$$ The task is less trivial than it appears, since the falsity condition $(a,b) \neq (a',b') \; \leftrightarrow \;  (a \neq a' \lor b \neq b')$ refers to the native $\neq$ relation which is governed by the extensionality axiom. The  Kuratowski pair will not suffice for this purpose since this  defines a classical set.  But we can define ordered pairs in a round-about way as follows:

\begin{itemize}
\item First, say that the {\it classical} ordered pair $\left<u,v\right>$ is $\{\{u\}, \{u,v\}\}$.
\item Then, for sets $a$ and $b$ define $(a,b) \; := \; \{ \left<x,0\right> : x \in a\} \cup \{ \left<y,1\right> : y \in b\}$, where $0$ is $\varnothing$ and $1$ is $\{\varnothing\}$.
\end{itemize}
Here, $(a,b)$ is essentially the disjoint union of copies of $a$ and $b$, so the non-classical structure of the sets is unaffected.  Showing that this definition indeed satisfies $(*)$ is somewhat lengthy, so we leave the details to the reader. 
 We should note that there are  other ways to encode  ordered pairs,  for example see   \cite[Appendix A]{HrafnThesis}.  The exact method is inconsequential as long as condition $(*)$ is fulfilled. 

 The {\it Cartesian product} $A \times B := \{(a,b) : a \in A \land b \in B\}$  is defined as usual.  It is easy to see that $(A \times B)^! = A^! \times B^!$ and $(A\times B)^? = A^? \times B^?$, so if $A$ and $B$ are classical,  $A \times B$ is as well.
%
A binary {\it relation} between $A$ and $B$ is any  $R \subseteq  A \times B$.  We note that $R$ may fail to be classical  even if $A$ and $B$ are.  We can define the domain and range of a relation by stipulating that $\dom(R) = \{x : \exists y ((x,y) \in R)\}$ and $\ran(R) = \{y : \exists x ((x,y) \in R)\}$.  It then follows that $\dom(R) \subseteq A$ and $\ran(R) \subseteq B$. The domain and range will generally be non-classical if $R$ is non-classical.  The following definition deserves special attention:

\begin{Def}  \label{eqrel} $E \subseteq X \times X$ is called an {\it equivalence relation} if for all $x,y,z$ in  rlm$(X)$ (i.e.,  in $X^! \cup X^?$) we have:
\begin{enumerate}

\item $(x,x) \in E$.
\item $(x,y) \in E \Leftrightarrow (y,x) \in E$.
\item $(y,z) \in E \; \to \; ((x,y) \in E \Leftrightarrow (x,z) \in E)$.
\end{enumerate} \end{Def}
Notice that this tells us more than $E$ being reflexive, symmetric and transitive; in addition,  3 implies that if $y$ and $z$ are $E$-related, then they are  indistinguishable with respect to being $E$-related to another element $x$.  If we now define the {\it $E$-equivalence class} of an element $x \in X$ by
$$[x]_E := \{y : (x,y) \in E\}$$ we obtain the following:

\begin{Lem} \label{eqrel2} For all $x,y \in {\rm rlm}(X)$ we have  $(x,y) \in E \; \Leftrightarrow \; [x]_E = [y]_E$.
\end{Lem}

\begin{proof} The positive  equivalence  $(x,y) \in E \; \leftrightarrow \; [x]_E = [y]_E$ is obvious.  Also, it is clear that if $(x,y) \notin E$, then by definition $x \notin [y]_E$, while  $(x,x) \in E$ implies that $x \in [x]_E$.  Therefore there is a witness $x$ which is in  $[x]_E$ and not in $[y]_E$,  so $[x]_E \neq [y]_E$ by extensionality.

\p Now, suppose $[x]_E \neq [y]_E$.  Then there exists $z$ such that $z \in [x]_E$ and $z \notin [y]_E$, or vice versa. Without loss assume the former, so by definition we have $(z,x) \in E$ and $(z,y) \notin E$.  But now the strong bi-implication in condition 3 gives us $(x,y) \notin E$, as required. \end{proof}

Next we look at the notion of a {\it function}.  It is not entirely straightforward how the concept of a {\it non-classical function} should be understood,  nor how the related concepts of {\it image}, {\it pre-image} etc.  of such a function mean.  However,  for the purposes of this paper,  it will suffice to look only at {\it classical} functions:

\begin{Def} If $A$ is a classical set,  a {\it classical function} from $A$ to $B$ is a relation $R \subseteq A \times B$ which is classical,  satisfies $\dom(R) = A$, and 

$$\forall x \forall y \forall  z ((x,y) \in f \land (x,z) \in f) \; \to !(y=z))$$ \end{Def}


Classical functions take sets as inputs and generate other sets as outputs in a unique way, just as ordinary functions do.  Note, however,  that even if $f$ is classical, the inputs and outputs need not be classical, so  it is dangerous and potentially misleading to use notation like ``$f(x) = y$''.  For example,  suppose $x$ and $y$ are  such that $(x,y) \in f$ and $y\neq y$.  If we write  ``$f(x) = y$'' we should also write ``$f(x) \neq y$''.  But this should not be confused with ``$(x,y) \notin f$''.

Recall that we already  introduced the  concept of an {\it operation} (Definition \ref{operation}).  If $\varphi(x,y)$ is an operation and $A$ a classical set, then $\{(x,y) : x \in A \land \varphi(x,y)\}$ is a classical function.\footnote{Here we use Replacement followed by Comprehension to ensure that this object is a set and not just a proper class.} Conversely, if  $f$ is a classical function with (classical) domain $A$, and we fix an arbitrary  $y_0$,  then the following formula is an operation which coincides with $f$ on  $A = \dom(f)$:
$$\varphi(x,y) \;   \equiv  \;  ((x \in A \to (x,y) \in f)  \land (( x \notin A \to !(y=y_0))$$
When we talk about {\it isomorphisms} in the next sections,  we will be referring to classical functions when the structures are sets, or operations when they are proper classes.

\bigskip We now turn our attention to the notions of  {\it ordinal}, {\it induction} on ordinals, and {\it recursive definitions}.  
Informally,  ordinals will be  {\it classical} {\it transitive} sets,   totally ordered by $\in$,  and containing only  {\it classical} sets as members:

\vbox{\begin{Def} $\alpha$ is an {\it ordinal} iff
\begin{itemize}
\item $\alpha$ is classical.
\item $\forall \beta \: ( \beta \in \alpha \; \to \; \beta$ is classical$)$.
\item $\forall \beta \: (\beta \in \alpha \; \to \; \beta \subseteq \alpha)$.
\item $\forall \beta  \:\forall \gamma \: ((\beta \in \alpha \land \gamma \in \alpha) \; \to \; (\beta \in \gamma \lor \gamma \in \beta \lor \beta = \gamma))$.
\end{itemize}
\end{Def} }
All  facts about ordinals known from $\ZFC$ are also true for this definition, because any formula $\varphi$ that refers only to ordinals and their members, will be a classical formula. For example, one can show that elements of ordinals are themselves ordinals,  that ordinals are unique up to isomorphism,  that any well-founded structure is isomorphic to an ordinal,  and so on. The same holds for the transfinite induction principle on ordinals, stating that for any formula $\varphi$:

 \vbox{
$$\left( \forall \alpha (\alpha \text{ is an ordinal } \to\;  (\forall \beta (\beta \in \alpha \land \varphi(\beta) )\to \varphi(\alpha) \right)$$
$$\to \;  \forall \alpha (\alpha \text{ is an ordinal } \to \varphi(\alpha)) $$}

From this, the recursion principle follows by usual methods again: 

\begin{Lem}[Recursion Principle] Let $G$ be any {\it operation}. Then there exists a unique operation $F$, such that $F(x,y)$ implies that $x$ is an ordinal,  and such that for every $\alpha$ we have $$F(\alpha) = G(F \till \alpha).\footnote{Here we have abused notation somewhat, but recall that $F$ and $G$ are, by definition,  given by classical formulas, so this abuse of notation is consistent with its usage in classical $\ZFC$.}$$ \end{Lem}

\bigskip
We end this section by providing the promised proof that a  careless translation of singletons, pairs and power sets would result in proper classes. 

\begin{Lem}[$\BZFC$] \label{properclass} For sets $u, u_1, \dots u_n$, the collections $\{y : y \subseteq u\}$, $\{y : y = u\}$ and $\{y : y=u_1 \lor \dots \lor y=u_n\}$ are proper classes. \end{Lem}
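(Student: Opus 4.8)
The strategy is to show that each of the three collections, if it were a set, would have a $?$-extension that is too large to be a set, contradicting Lemma \ref{extension}. I will handle $\{y : y = u\}$ first since the other two reduce to it. Writing $P := \{y : y = u\}$ (using Convention \ref{conny}, so $y \in P \Leftrightarrow y = u$), I compute its $?$-extension. By definition $y \in P^?$ iff $?(y \in P)$ iff $?(y = u)$, and by Lemma \ref{shwa}.(6) this holds iff $y^! \subseteq u^?$ and $u^! \subseteq y^?$. The key observation is that this condition places \emph{no upper bound} on $y^?$: given any classical set $w$, I can use $\ACLA$ (via Theorem \ref{aclaacla}) to produce a set $y$ with $y^! = u^!$ and $y^? = u^? \cup w$; then $y^! = u^! \subseteq u^? \subseteq y^?$ and $u^! \subseteq y^?$, so $y \in P^?$, while $y^? = u^? \cup w \supseteq w$. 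Thus $P^?$ contains sets whose $?$-extension includes an arbitrary classical set $w$.

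From this I derive a contradiction: if $P$ were a set, then by Lemma \ref{extension} $P^?$ would be a set, and then $\bigcup \{ \mathrm{rlm}(y) : y \in P^?\}$ (obtained from Union and Replacement applied to the operation $y \mapsto \mathrm{rlm}(y) = y^! \cup y^?$, which is classical) would be a set $S$. But by the previous paragraph, for every classical $w$ there is $y \in P^?$ with $w \subseteq y^? \subseteq \mathrm{rlm}(y) \subseteq S$, so every classical set is a subset of $S$, hence every classical set is an \emph{element} of $S^! = \PP^!(S) \cap \dots$ — more directly, $S$ would contain, as a subset, the realm of every set (since every set is contained in a classical superset by the Classical Superset axiom), making $S$ a universal class, contradicting the fact (noted after Convention \ref{conny}) that the universe is not a set. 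I should double-check the cleanest way to phrase this final contradiction; the simplest is: every set $x$ has a classical superset $C$, and $C \subseteq S$ by the above, so $x \in S$, i.e. $S$ is the universe — impossible.

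For $\{y : y \subseteq u\}$: here $y \in \PP(u)^?$ iff $?(y \subseteq u)$ iff (Lemma \ref{shwa}.(3)) $y^! \subseteq u^?$, again with no constraint on $y^?$, so the identical argument applies — for any classical $w$, take $y$ with $y^! = \varnothing$ and $y^? = w$ via $\ACLA$; then $y^! \subseteq u^?$ trivially, so $y \in \PP(u)^?$ while $y^? = w$. For $\{y : y = u_1 \lor \dots \lor y = u_n\}$, note $y \in (\cdot)^?$ iff $?(y = u_1) \lor \dots \lor ?(y = u_n)$, which in particular holds whenever $?(y = u_1)$ holds, so this $?$-extension contains the $?$-extension of $\{y : y = u_1\}$, which is already a proper class by the first case; a superclass of a proper class is a proper class.

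\textbf{Main obstacle.} The delicate point is the final contradiction-derivation: turning ``$P^?$ contains sets of arbitrarily large realm'' into ``the universe is a set.'' This requires that Union and Replacement legitimately combine to form $\bigcup\{\mathrm{rlm}(y) : y \in P^?\}$ — one must check that $y \mapsto \mathrm{rlm}(y)$ is an operation in the technical sense (classical formula, functional), which follows since $\mathrm{rlm}(y) = y^! \cup y^?$ is defined by a classical formula and is uniquely determined. Everything else is a routine unwinding of the $?$-extension via Lemma \ref{shwa} together with an application of $\ACLA$/Theorem \ref{aclaacla}.
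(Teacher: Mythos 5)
Your overall strategy is sound and genuinely different from the paper's. The paper also reduces everything to ``the $?$-extension is too big'', but it finishes more directly: working with $X=\{y : y\subseteq u\}$, it considers the operation $F\colon X^?\to\Clas$, $y\mapsto y^?$ (formally $\varphi(y,w)\equiv\;!(w=y^?)$), notes that $\ACLA$ makes $F$ surjective onto the class $\Clas$ of classical sets (take $y$ with $y^!=u^?$, $y^?=w$), and contradicts Replacement using the fact that $\Clas$ is a proper class (which follows from the Russell class being proper). Your version instead forms $S=\bigcup\{\mathrm{rlm}(y):y\in P^?\}$ and argues $S$ would be a universal set. Both arguments ultimately rest on Russell (the paper via ``$\Clas$ is proper'', you via ``the universe is not a set''); the paper's is shorter because it never needs Union or the universal-set step, while yours has the mild advantage of not needing the intermediate fact about $\Clas$.

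Two local steps in your write-up are wrong as stated, though both are easily repaired. First, your witness for $\{y:y=u\}$: the condition for $y\in P^?$ is $y^!\subseteq u^?\;\land\;u^!\subseteq y^?$, and your choice $y^!=u^!$, $y^?=u^?\cup w$ satisfies neither conjunct in general when $u$ is inconsistent ($u^!\not\subseteq u^?$); take instead $y^!=\varnothing$ and $y^?=u^!\cup w$, which works for arbitrary $u$ and still has $w\subseteq y^?$. Second, the final contradiction: from ``every classical $w$ satisfies $w\subseteq S$'' and the Classical Superset axiom you get, for an arbitrary set $x$, only $x\subseteq C\subseteq S$, i.e.\ $x\subseteq S$ --- not $x\in S$, so $S$ being ``universal as a superset'' is not yet the statement that the universe is a set. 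The clean fix is to apply your claim to $w:=\{x\}$, which is a classical set containing $x$ by Convention \ref{conny2}, giving $x\in S$ for every $x$; alternatively, apply it to $\{S\}$ to get $S\in S$ and contradict Foundation. With those two repairs (and the routine check you already note, that $y\mapsto\mathrm{rlm}(y)$ is a legitimate operation for Replacement), your argument goes through, including the reductions of the other two collections.
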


\begin{proof} We will only prove the first case since the others are similar.  We know  that $\{x : \lnot (x \in x)\}$ is a proper class, and from this it easily follows that the collection $\Clas$ of all classical sets forms a proper class. 

\p Suppose there is a set $X = \{y : y \subseteq u\}$.  By Lemma \ref{extension} we know that $X^?$ is also a set.  By Lemma \ref{shwa} (3), we know that $y \in X^? \Leftrightarrow y^! \subseteq u^?$.  Consider the operation $$F: \begin{array}{rll} X^? & \to & \Clas\\ y & \mapsto & y^?\end{array}$$ Formally, this operation is given by $\varphi(y,w) \; \equiv \; !(w=y^?)$ which is  a classical formula and satisfies the conditions for the Replacement axiom.  Moreover, by $\ACLA$ the operation is surjective,  i.e., for every classical $w$ there is $y \in X^?$ such that $\varphi(y,w)$ holds (take $y$ with $y^! = u^?$ and $y^? = w$). But then $F[X^?] = \Clas$,  and since $X^?$ is a set,  $\Clas$ should be a set, which is a contradiction.\end{proof}

The above argument  is somewhat informal,  so readers may wonder what it means to says that something is a proper class, or what  a proof by contradiction means in  the paraconsistent setting. Formally, what we have proven is that if a set $X$ as   above exists, then $\bot$.

\section{A model of $\BZFC$} \label{model}

In this section we construct a   T/F-model for $\BZFC$ starting from  $\ZFC$.  Usually this would yield a {\it relative consistency} proof, i.e., a proof that if $\ZFC$ is consistent then $\BZFC$ is consistent.  Of course $\BZFC$ is, by design,  {\it inconsistent}, so instead we will talk of {\it non-triviality:}

\begin{Def} A $\BS$-theory $\Gamma$ is called {\it non-trivial} if $\Gamma \not\vdash_\BS \bot$.  
\end{Def}



\begin{Def}[$\ZFC$] By induction on ordinals we define: \label{defW}

\begin{itemize}

\item $W_0 = \varnothing$
\item $W_{\alpha+1} := \PP(W_\alpha) \times \PP(W_\alpha)$
 \item $W_{\lambda} = \bigcup_{\alpha<\lambda}W_\alpha$ for limit $\lambda$
\item $\displaystyle \IW := \bigcup_{\alpha \in \textup{Ord}}W_\alpha. $

\end{itemize}
 The positive and negative interpretations of $\in$ and $=$ are given by:

\begin{itemize}

\item $(a,b) \in^+ (c,d)$ iff $(a,b) \in c$
\item $(a,b) \in^- (c,d)$ iff $(a,b) \notin d$
\item $(a,b) =^+ (c,d)$ iff $(a,b) = (c,d)$ 
\item $(a,b) =^- (c,d)$ iff $\exists z \in a \setminus d$ or $\exists z \in c \setminus b$.

\end{itemize}
\end{Def}
 The following  properties of $\IW$ are easily verifiable. 

\begin{Lem}[$\ZFC$] $\;$ \begin{enumerate} \label{helpful}
\item If $\alpha<\beta$ then $W_\alpha \subseteq W_\beta$.
\item If $(a,b) \in W_\alpha$, $a' \subseteq a,$ and $b' \subseteq b$ then $(a',b') \in W_\alpha$.
\item $x \in \IW$  iff $x = (a,b)$ for some $a,b \subseteq \IW$. 
\end{enumerate}
\end{Lem}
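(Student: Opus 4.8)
The plan is to prove the three numbered items of Lemma~\ref{helpful} directly from Definition~\ref{defW}, each by a short induction on ordinals or a direct unwinding of the definitions.

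For item (1), $\alpha < \beta \Rightarrow W_\alpha \subseteq W_\beta$, I would argue by induction on $\beta$. The limit case is immediate from $W_\lambda = \bigcup_{\alpha<\lambda} W_\alpha$. For the successor case $\beta = \gamma+1$, it suffices to handle $\alpha = \gamma$ (the general case then follows by the induction hypothesis $W_\alpha \subseteq W_\gamma$ together with $W_\gamma \subseteq W_{\gamma+1}$), so I must show $W_\gamma \subseteq \PP(W_\gamma)\times\PP(W_\gamma)$. This in turn needs a simultaneous claim: every element $(a,b)$ of $W_\gamma$ has $a, b \subseteq W_\gamma$, which is the content foreshadowed by item (3) at level $\gamma$. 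So really I would first establish, by induction on $\gamma$, the statement ``$x \in W_\gamma$ iff $x=(a,b)$ with $a,b\subseteq W_{<\gamma} := \bigcup_{\delta<\gamma}W_\delta$'', or more cleanly just prove (3) about $\IW$ first and derive the monotonicity from it together with the fact that $\PP(-)\times\PP(-)$ is monotone.

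For item (3), $x\in\IW$ iff $x=(a,b)$ with $a,b\subseteq\IW$: the forward direction follows since any $x\in\IW$ lies in some $W_{\alpha+1}=\PP(W_\alpha)\times\PP(W_\alpha)$ (it cannot lie only in $W_0=\varnothing$ or first appear at a limit stage, using item (1)), hence $x=(a,b)$ with $a,b\subseteq W_\alpha\subseteq\IW$. For the backward direction, if $a,b\subseteq\IW$ then by Replacement/Collection in $\ZFC$ there is an ordinal $\alpha$ with $a,b\subseteq W_\alpha$ (each element of $a\cup b$ appears at some stage; take the supremum), so $(a,b)\in\PP(W_\alpha)\times\PP(W_\alpha)=W_{\alpha+1}\subseteq\IW$. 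Item (2) is then immediate: if $(a,b)\in W_\alpha$ then (as just noted, via item (1) and the successor form) $(a,b)\in W_{\gamma+1}$ for some $\gamma+1\le\alpha$, so $a,b\subseteq W_\gamma$; hence $a'\subseteq a\subseteq W_\gamma$ and $b'\subseteq b\subseteq W_\gamma$, giving $(a',b')\in\PP(W_\gamma)\times\PP(W_\gamma)=W_{\gamma+1}\subseteq W_\alpha$.

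I do not anticipate a genuine obstacle here; this is routine $\ZFC$ bookkeeping about a cumulative-style hierarchy, and the excerpt itself says the properties ``follow immediately from the definition.'' The only point requiring a little care is the interplay between items (1) and (3): one should not invoke monotonicity to prove the ``element appears at a successor stage'' fact and then use that fact to prove monotonicity in a circular way. The clean order is: (a) prove by induction on $\alpha$ that for $x\in W_\alpha$ there is $\gamma<\alpha$ with $x\in W_{\gamma+1}$ and $x=(a,b)$, $a,b\subseteq W_\gamma$; (b) deduce $W_\alpha\subseteq W_\beta$ for $\alpha<\beta$ from the successor/limit structure; (c) deduce (3) and (2) as above. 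The whole proof should take only a few lines.
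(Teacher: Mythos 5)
Your proposal is correct and is essentially the paper's argument: the paper simply states ``Induction on the definition,'' and your write-up is that induction carried out in full, with the sensible precaution of establishing the ``every element is a pair $(a,b)$ with $a,b$ from an earlier stage'' fact before monotonicity so as to avoid circularity. Nothing further is needed.
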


\begin{proof} Induction on the definition.  \end{proof}

Now $(\IW,\in^+, \in^-, =^+,=^-)$ may be considered  a T/F-model in the language of set theory, except that   $\IW$ is a proper class.  Therefore,   ``$\IW \models  \varphi$'' must be understood via  {\it relativization}:  for every $\varphi$ we define   $\varphi^{\IW, T} $ and $\varphi^{\IW,F}$ by syntactic induction,  generalizing from Definition \ref{mainsemantics} in the obvious way (we leave the details to the reader).  For a theory $\Gamma$,  $\IW \models \Gamma$ means that for every $\varphi$ in $\Gamma$, there is a proof of $\varphi^{\IW,T}$.

\begin{Thm}[$\ZFC$] $\IW \models \BZFC$.   \label{W} 
\end{Thm}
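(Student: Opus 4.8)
The proof is a verification, axiom by axiom, that the relativized translations $\varphi^{\IW,T}$ hold in $\ZFC$ for each $\varphi \in \BZFC$. The organizing principle is the structural description from Lemma \ref{helpful}: every element of $\IW$ is a pair $(a,b)$ with $a,b \subseteq \IW$, where $a$ codes the !-extension and $b$ codes the ?-extension, and the interpretations $\in^+, \in^-, =^+, =^-$ are designed precisely so that $(a,b)^! = \{x \in \IW : x \in a\}$ (up to the identification) and $(a,b)^? = \{x \in \IW : x \in b\}$ — more precisely, one first proves the key computational lemma that for $u = (a,b) \in \IW$, the $\IW$-translations of $x \in u^!$ and $x \in u^?$ reduce to $x \in a$ and $x \in b$ respectively, and that $u$ is ``classical in $\IW$'' iff $a = b$. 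This lemma, together with the unwinding of $!$, $?$, $\lnot$, $\circ$ via their truth tables (Table \ref{ttnot}), converts each axiom into a routine set-theoretic assertion about pairs of subsets of $\IW$.

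\textbf{Key steps, in order.} First I would establish the computational lemma just described, and observe that for any $a \subseteq \IW$ the pair $(a,a)$ is classical in $\IW$ and serves as a ``classical set'' whose realm is $a$; this immediately gives \textbf{Classical Superset} (given $u = (a,b)$, take $C = (a \cup b, a \cup b)$) and underpins everything else. Then I would go through the remaining axioms:
\begin{itemize}
\item \textbf{Extensionality}: unwind $x =^+ y$ (genuine equality of pairs) and $x =^- y$ against the definition of $=^-$, matching them to $\forall z(z \in x \Leftrightarrow z \in y)^{\IW}$ via the computational lemma — the positive side says the first coordinates agree, the negative side (an $\exists$) says they disagree, exactly mirroring the $=^-$ clause.
\item \textbf{Comprehension}, \textbf{Replacement}, \textbf{Pairing}, \textbf{Power Set}, \textbf{Union}: in each case, given the data, explicitly exhibit the witnessing pair. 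For Comprehension with $u = (a,b)$ and formula $\varphi$, take $x = (\{z \in a : \varphi^{\IW,T}(z)\}, \{z \in b : \neg(\varphi^{\IW,F}(z))\})$ — the first coordinate realizing the !-extension, the second the ?-extension, using that $y \in x \Leftrightarrow (y \in u \land \varphi(y))$ splits into a truth part and a falsity part. For Pairing take $(\{u,v\},\{u,v\})$; for Power Set take $(\{x \in \IW : !(x \subseteq u)\}^{\IW}, \text{same})$, a classical pair by construction, where one checks $!(x \subseteq u)^{\IW}$ computes to $x^{!} \subseteq u^{!} \land x^{?} \subseteq u^{?}$, i.e.\ first-coordinate and second-coordinate containment; all such sets exist as genuine $\ZFC$-sets since they are subclasses of $\PP(W_\alpha)^2$ for suitable $\alpha$. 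For Replacement one additionally uses that an ``operation'' $\varphi$, being classical, has $\varphi^{\IW,T}$ defining a genuine class function in $\ZFC$, so its image of a set is a set by $\ZFC$-Replacement; the $\circ\varphi$ and $!(y=z)$ hypotheses are exactly what make this translation faithful.
\item \textbf{Infinity} and \textbf{Foundation}: Infinity follows by checking that the $\in$-chain of classical pairs $(\varnothing,\varnothing), (\{(\varnothing,\varnothing)\},\ldots),\ldots$ living inside $\IW$ translates the $\ZFC$ $\omega$; Foundation (set-induction) follows from $\in$-induction in $\ZFC$ on $\IW$, since the realm $x^! \cup x^?$ of $x=(a,b)$ translates to (a code for) $a \cup b$, whose members sit at lower rank in the $W_\alpha$-hierarchy.
\item \textbf{Choice}: using that in $\IW$ the relevant combinatorial objects (ordered pairs, functions) are built from classical pairs, reduce to $\ZFC$-Choice applied to the first coordinates.
\item \textbf{ACLA}: exhibit an inconsistent set, e.g.\ $(\{(\varnothing,\varnothing)\}, \varnothing)$, whose !-extension is $\{\varnothing\}^{\IW}$ but whose ?-extension is empty, so $x^! \not\subseteq x^?$; and an incomplete set $(\varnothing, \{(\varnothing,\varnothing)\})$ with $x^? \not\subseteq x^!$. (Alternatively, invoke Theorem \ref{aclaacla} once one of these is produced.)
\end{itemize}

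\textbf{Main obstacle.} The genuinely delicate part is not any single axiom but getting the translation machinery exactly right: one must prove the computational lemma relating $(u^!)^{\IW}$, $(u^?)^{\IW}$, $(x \subseteq y)^{\IW}$, $(\circ\varphi)^{\IW}$, etc.\ to concrete operations on the coordinates, and this requires carefully tracking the interplay of $\models^T$ and $\models^F$ through the defined connectives — in particular verifying that $(x =^- y)$ as \emph{defined in Definition \ref{defW}} (the clause ``$\exists z \in a\setminus d$ or $\exists z \in c \setminus b$'') genuinely coincides with the $\IW$-translation of $x \neq y$ as computed from Lemma \ref{shwa}(5), i.e.\ $x^! \not\subseteq y^? \lor y^! \not\subseteq x^?$. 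Once this dictionary is in place, every axiom verification is mechanical; the risk is entirely in the bookkeeping of truth-versus-falsity clauses, and I would expect to state the dictionary as a preliminary lemma and then dispatch the ten axioms plus ACLA in short order, leaving the most routine cases (Union, Infinity, Choice) to the reader as the authors are likely to do.
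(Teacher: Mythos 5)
Your proposal is correct and follows essentially the same route as the paper: a direct verification of the relativized axioms using the coordinate structure of $\IW$ (first coordinate as $!$-extension, complement-of-second as the negative extension), with your Comprehension witness $(\{z \in a : \varphi^{\IW,T}(z)\},\ \{z \in b : \lnot\varphi^{\IW,F}(z)\})$ coinciding exactly with the paper's. The paper merely writes out Extensionality and Comprehension in detail and leaves the remaining axioms (and ACLA) to the reader, whereas you also sketch correct explicit witnesses for those, e.g.\ $(a\cup b, a\cup b)$ for Classical Superset and $(\{(\varnothing,\varnothing)\},\varnothing)$, $(\varnothing,\{(\varnothing,\varnothing)\})$ for ACLA.
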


\begin{proof} We will  prove Extensionality and Comprehension in some detail  and leave the rest to the reader. 

\p Written out in full, the relativization (Extensionality)$^{\IW,T}$ reads as follows

$$\forall (a,b) \in \IW \; \forall (c,d) \in \IW:$$
$$(a,b) =^+ (c,d) \;\; \leqqq\;\; \forall z\in \IW  ((z \in^+ (a,b) \leqqq z \in^+ (c,d)) \land (z \in^- (a,b) \leqqq z \in^- (c,d)) $$
$$ \land \; (a,b) =^- (c,d)  \;\; \leqqq\;\;\exists z \in \IW ((z \in^+ (a,b) \land z \in^- (c,d)) \lor (z \in^+ (c,d) \land z \in^- (a,b))  $$

\p Assume $(a,b)$ and $(c,d)$ are arbitrary, and we show that the two equivalences hold.  For the first we have \medskip 

 \medskip $ \forall z\in \IW  ((z \in^+ (a,b) \leqqq z \in^+ (c,d)) \land (z \in^- (a,b) \leqqq z \in^- (c,d))$
 
 \medskip  $\stackrel{(*)}{\leqqq }\;\;$
  $\forall z\in \IW  ((z \in a \leqqq z \in c) \land (z \notin b \leqqq z \notin d)).$

 \medskip  $\stackrel{(**)}{\leqqq }\;\;$
 $\forall z  ((z \in a \leqqq z \in c) \land (z \notin b \leqqq z \notin d))$

 \medskip $\leqqq\;\; $  $a=c$ and $b=d$

 \medskip  $\leqqq\;\;$   $(a,b) =^+ (c,d)$

 \noindent where $(*)$  refers to the definition of $\in^+$ and $\in^-$, and $(**)$ is because  $\IW$ is ``transitive'' in the sense that  $a,b,c,d \in \IW$ only contain sets which are also in $\IW$.

  \p For the second  equivalence we have
  
   \medskip $\exists z \in \IW((z \in^+ (a,b) \land z \in^- (c,d)) \lor (z \in^+ (c,d) \land z \in^- (a,b))$

  \medskip$\leqqq\;\;$  $\exists z(( z \in a \land z \notin d) \lor (z \in c \land z \notin b))$

  \medskip$\leqqq\;\;$  $(a,b) =^- (c,d)$

  \p Next we look at (Comprehension)$^{\IW,T}$ which is the following statement:\footnote{Again we suppress parameters, noting that if there are parameters these are understood to be in $\IW$.}

$$\forall (a,b) \in \IW \; \exists (c,d)  \in \IW\;  \forall z   \in \IW\:$$ $$ ( (z \in^+ (c,d) \leqqq (z \in^+ (a,b) \land \varphi(z)^{\IW,T}) \land (z \in^- (c,d) \leqqq (z \in^- (a,b) \lor \varphi(z)^{\IW,F}))$$

\p Suppose $(a,b)\in \IW$ and $\varphi$ is given.  Define $$c := \{z \in a : \varphi(z)^{\IW,T}\}$$ $$d := \{z \in b : \lnot \varphi(z)^{\IW,F}\}\footnote{An equivalent definition is $c = \{z \in a : \IW \models \: !\varphi(z)\}$ and $d = \{z \in b : \IW \models \: ? \varphi(z)\}$.}$$
Then $(c,d) \in \IW$ by construction and for all $z$ we have $z \in c \leqqq (z \in a \land \varphi(z)^{\IW,T})$ and $z \notin d \leqqq (z \notin b \lor  \varphi(z)^{\IW,F})$. This is exactly the statement above as needs to be proved. \end{proof}

\begin{Cor} If $\ZFC$ is consistent then $\BZFC$ is non-trivial. \end{Cor}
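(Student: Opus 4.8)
The plan is to derive the corollary directly from Theorem \ref{W} together with the soundness direction of Lemma \ref{completeness}. First I would argue the contrapositive: suppose $\BZFC$ is trivial, i.e., $\BZFC \vdash_\BS \bot$. By soundness (Lemma \ref{completeness}), any T/F-model of $\BZFC$ would have to satisfy $\bot$, but by Definition \ref{mainsemantics}(10) we have $\M \models^T \bot$ \emph{never}, so $\BZFC$ can have no T/F-model. Consequently it suffices to exhibit, working in $\ZFC$, a T/F-model of $\BZFC$ — and this is precisely what Theorem \ref{W} provides via the class-model $\IW$.

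The one subtlety is that $\IW$ is a proper class rather than a set, so it is not literally a T/F-model in the sense of Definition \ref{sem1}, and $\IW \models \BZFC$ was defined above only via relativization (provability of $\varphi^{\IW,T}$ for each axiom $\varphi$). The clean way to handle this is the standard reflection/compactness trick: a $\BS$-proof of $\bot$ from $\BZFC$ uses only finitely many axioms $\varphi_1, \dots, \varphi_k$, hence $\{\varphi_1, \dots, \varphi_k\} \vdash_\BS \bot$. By Theorem \ref{W}, $\ZFC$ proves each $\varphi_i^{\IW,T}$; by the Reflection Theorem in $\ZFC$ there is an ordinal $\alpha$ (indeed a club of them) such that the set-sized T/F-model $(W_\alpha, \in^+, \in^-, =^+, =^-)$ — which is an honest T/F-model per Definition \ref{sem1}, since the positive/negative interpretations of $\in$ and $=$ restrict to $W_\alpha$ by Lemma \ref{helpful}(2) — satisfies $\varphi_i$ (in the $\models^T$ sense) for $i = 1, \dots, k$. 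Then soundness applied to this genuine model yields $W_\alpha \models^T \bot$, contradicting Definition \ref{mainsemantics}(10). Hence no such proof exists, i.e., $\BZFC$ is non-trivial, and this whole argument was carried out assuming only $\Con(\ZFC)$ (which is what licenses reasoning about models of $\ZFC$, or one may phrase it purely proof-theoretically: a $\BZFC$-proof of $\bot$ would, via the relativization, give a $\ZFC$-proof of a contradiction).

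I expect the main obstacle to be purely expository rather than mathematical: making precise the passage from the proper-class ``model'' $\IW$ to a set-sized model to which soundness literally applies. The paper has been somewhat informal about $\IW \models \BZFC$ meaning ``$\ZFC$ proves $\varphi^{\IW,T}$ for each axiom'', so one has two equally valid routes — (a) the reflection argument sketched above, or (b) arguing entirely at the level of provability: translate a hypothetical $\BS$-derivation of $\BZFC \vdash_\BS \bot$ through the relativization $(-)^{\IW,T}$ to obtain $\ZFC \vdash \bot^{\IW,T}$, observe $\bot^{\IW,T}$ is $\ZFC$-refutable (it abbreviates a literal contradiction), and conclude $\ZFC$ is inconsistent. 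Route (b) avoids semantics altogether and is arguably cleaner given how the excerpt set things up; I would present that, with the reflection version relegated to a parenthetical remark. Everything else is routine: the finiteness of proofs, the restriction of the interpretations in Lemma \ref{helpful}(2), and the triviality that $\bot$ is never $\models^T$-true.
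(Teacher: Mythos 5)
Your proposal is correct and matches the paper's (implicit) argument: the paper states the corollary as an immediate consequence of Theorem \ref{W}, i.e.\ precisely your route (b), where a hypothetical $\BS$-derivation of $\bot$ from $\BZFC$ is pushed through the relativization $(-)^{\IW,T}$ to yield a $\ZFC$-proof of a contradiction. Your additional care about $\IW$ being a proper class (finitely many axioms plus reflection to a set-sized $W_\alpha$) is a sound optional refinement, but not needed for the paper's intended reading.
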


We have thus shown that $\BZFC$ is essentially not a more problematic theory than $\ZFC$.  In addition,  the canonical model $\IW$ contains a natural copy of the classical universe $V$:

\begin{Def}[$\ZFC$] \label{vcheck} For every $x \in V$,  inductively define $$\check{x}  :=  ( \{ \check{y} : y \in x\}, \;  \{ \check{y} : y \in x\}).$$  Also let $\check{V} := \{\check{x} : x \in V\}$. 
\end{Def}

\begin{Lem}[$\ZFC$]  \label{vcheckcheck} The mapping

$$i: \begin{array}{ccl}V & \rightarrow & \check{V} \subseteq \IW \\ x & \mapsto &\check{x} \end{array}$$
 is an isomorphism between $(V,\in, \notin,=,\neq)$ and $(\check{V},\in^+,\in^-,=^+,=^-)$.

 \end{Lem}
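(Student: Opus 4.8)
The plan is to verify that $i$ is a bijection and that it preserves and reflects the four relations $\in^+, \in^-, =^+, =^-$; the argument is essentially a transfinite induction on rank, paralleling the classical proof that the Mostowski collapse (here run in reverse) is an isomorphism between $(V,\in)$ and $(\check V, \in)$.

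First I would establish that $i$ is well-defined and injective. The definition of $\check x$ in Definition \ref{vcheck} is by $\in$-recursion, so $\check x$ is a well-defined element of $\IW$ for every $x$ (one checks $\check x = (a,a)$ with $a \subseteq \IW$, invoking Lemma \ref{helpful}(3)). Injectivity follows by $\in$-induction: if $x \neq y$, then without loss of generality there is $z \in x \setminus y$; by the inductive hypothesis the map is injective on $\bigcup_{w \in x \cup y} \mathrm{rank}$-smaller sets, and since $\check x$ and $\check y$ have first coordinates $\{\check w : w \in x\}$ and $\{\check w : w \in y\}$ respectively, we get $\check z \in \check x$ but $\check z \notin \check y$ (using injectivity below to rule out $\check z = \check w$ for some $w \in y$), hence $\check x \neq \check y$. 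Surjectivity onto $\check V$ is immediate since $\check V$ is defined as the image.

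Next I would check the four relation-preservation clauses, which are all direct unwindings of the definitions in Definition \ref{defW}. Writing $\check x = (a,a)$ and $\check y = (b,b)$ with $a = \{\check w : w \in x\}$, $b = \{\check w : w \in y\}$: for $\in^+$, we have $\check x \in^+ \check y \iff \check x \in b \iff \check x = \check w$ for some $w \in y \iff x \in y$ by injectivity — matching $x \in y$. For $\in^-$, $\check x \in^- \check y \iff \check x \notin b \iff x \notin y$, matching the ``false'' interpretation $x \notin y$ on the classical side. For $=^+$, $\check x =^+ \check y \iff \check x = \check y$ (as elements of $\IW$) $\iff x = y$ by injectivity. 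For $=^-$, by definition $\check x =^- \check y$ holds iff $\exists z \in a \setminus b$ or $\exists z \in b \setminus a$, i.e., iff $a \neq b$, i.e., iff $x \neq y$; this matches the interpretation of $\neq$ on the classical structure. So all four relations are preserved and reflected, and $i$ is an isomorphism.

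The only mildly delicate point — and the step I'd be most careful about — is the interplay between injectivity and relation-preservation: to conclude $\check x \in b \Rightarrow x \in y$ I need injectivity of $i$, while the cleanest proof of injectivity of $i$ itself wants to talk about membership of $\check z$'s in the first coordinates. These are not actually circular because everything is stratified by rank: one runs a single $\in$-induction in which, at stage $x$, one simultaneously has that $i$ restricted to sets of rank below $\mathrm{rank}(x)$ is injective, and deduces the membership equivalences for $\check x$ from that. I would phrase it that way to keep the induction clean. Beyond this, the whole proof is routine bookkeeping, and I would present it compactly, perhaps only spelling out the $=^-$ clause in detail since it is the one that differs most visibly from the classical situation.
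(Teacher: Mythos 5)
Your proposal is correct and follows the only natural route: the paper itself dismisses this lemma with ``Easy consequence of the definitions,'' and your argument is precisely the careful unwinding of those definitions (well-definedness and injectivity of $x\mapsto\check{x}$ by $\in$-induction, then the four relation clauses read off from Definition \ref{defW}). The rank-stratified induction you describe to avoid circularity between injectivity and the membership clauses is the standard and correct way to organize it.
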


\begin{proof} Easy consequence of the definitions.
\end{proof}
 

\section{Hereditarily classical sets.}  \label{translation}

Starting in $\BZFC$, we can also construct a natural model of $\ZFC$: this is the class of ``hereditarily classical'' sets.

\begin{Def}[$\BZFC$] By induction on ordinals\footnote{Recall from Section \ref{math} that the recursion principle is valid in $\BZFC$.} define:

\begin{itemize}

\item $\HCL_0 = \varnothing$
\item $\HCL_{\alpha+1} := \{X \subseteq \HCL_\alpha : X$ is classical$\}$\footnote{Here we technically use  Power Set to first construct $\PP^!(\HCL_\alpha)$ and then Comprehension to select the classical sets.} 
 \item $\HCL_{\lambda} = \bigcup_{\alpha<\lambda}\HCL_\alpha$ for limit $\lambda$
\item $\displaystyle \IHCL := \bigcup_{\alpha \in \textup{Ord}}\HCL_\alpha. $

\end{itemize}
\end{Def}

$\IHCL$ is a transitive proper class,  and for all $x$,  we have that $x \in \IHCL$ if and only if $x$ is classical and $x \subseteq \IHCL$. 
Again, the notation $\IHCL \models \varphi$ and $\IHCL \models \Gamma$ refers to {\it relativization} where  quantification is restricted to range over $\IHCL$. 

\begin{Thm}[$\BZFC$] $\IHCL \models \ZFC$.  \label{HCL}
\end{Thm}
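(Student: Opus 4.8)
The plan is to verify, for each axiom $\psi$ of $\ZFC$, that its standard relativization $\psi^{\IHCL}$ is provable in $\BZFC$. The essential fact to establish first is the characterization already stated: $x \in \IHCL$ if and only if $x$ is classical and $x \subseteq \IHCL$. From this one sees immediately that $\IHCL$ is transitive (if $x \in \IHCL$ and $y \in x$, then $y \in x \subseteq \IHCL$, using that $x$ classical means $x = x^! = x^?$ so membership is two-valued and behaves classically), and that the restriction of $\in^+$ to $\IHCL$ is an ordinary, two-valued membership relation. Thus on $\IHCL$ there is no difference between $\sim$ and $\lnot$, between $\to$ and $\Rightarrow$, and the operators $!,?$ are trivial; this means that relativizing a $\ZFC$-axiom to $\IHCL$ gives back essentially the same first-order statement, and the four-valued subtleties disappear. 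I would state and prove this as a preliminary lemma before touching the axioms.

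Next I would go through the axioms one at a time, in each case producing the required classical witness and checking it lies in $\IHCL$. For Extensionality: two classical sets with the same elements are equal by the $\BZFC$ Extensionality axiom (the $!$-extensions coincide, hence so do the $?$-extensions since both equal the $!$-extension). For Pairing, Union, Infinity, Power Set: given classical inputs from $\IHCL$, form the corresponding $\BZFC$-object ($\{u,v\}$, $\bigcup u$, $\omega$, $\PP^!(u)$), note it is classical by the Convention/axiom discussions in Sections \ref{intuition}–\ref{pzfc}, and check by the rank/cumulative-hierarchy structure of $\IHCL$ that it appears at some $\HCL_{\alpha}$. For Separation and Replacement: a $\ZFC$-formula relativized to $\IHCL$ becomes, by the preliminary lemma, a classical $\BZFC$-formula on classical inputs, so $\BZFC$-Comprehension (resp. $\BZFC$-Replacement, after checking the relativized formula defines an operation in the sense of Section \ref{pzfc}) yields a subset of a classical set, which is automatically classical and hereditarily so. For Foundation: this follows from the $\BZFC$ Foundation/set-induction schema applied to the class $\IHCL$, using that for $x \in \IHCL$ the realm $x^! \cup x^?$ is just $x$ itself. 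For Choice: apply $\BZFC$-Choice to a classical family; the resulting choice function is a classical set of classical pairs, hence in $\IHCL$.

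The main obstacle I anticipate is Replacement. One must check that when a $\ZFC$-formula $\varphi(x,y)$ is relativized to $\IHCL$ and is known (in the $\ZFC$ reading) to be functional, the relativized formula $\varphi^{\IHCL}$ genuinely satisfies the $\BZFC$ definition of an \emph{operation} — namely that it is classical and that $\forall x \exists y(\varphi^{\IHCL}(x,y) \land \forall z(\varphi^{\IHCL}(x,z) \to {!}(y=z)))$. Classicality follows from the preliminary lemma (the formula only quantifies over $\IHCL$ and only asserts two-valued statements about classical sets), but one has to be slightly careful that the functionality clause — including the ``$!$'' — is met, which again reduces to the fact that equality between elements of $\IHCL$ is two-valued. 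Once the formula is recognized as an operation, $\BZFC$-Replacement gives a set $Y$ with $z \in Y \Leftrightarrow \exists w(w \in x \land \varphi^{\IHCL}(w,z))$; this $Y$ is a subset of a classical set (e.g.\ intersect with a classical superset of the range, obtained via Classical Superset and Union), hence classical and in $\IHCL$, as required. I would also remark, as the authors do after Lemma \ref{properclass}, that ``$\IHCL \models \ZFC$'' is to be read schematically as: for each axiom $\psi$, $\BZFC \vdash \psi^{\IHCL}$.
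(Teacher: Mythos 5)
Your overall strategy is the right one and matches the core of the paper's argument: the key lemma in both cases is that a formula relativized to $\IHCL$, with hereditarily classical parameters, is a \emph{classical} formula, so the witnesses produced by $\BZFC$-Comprehension and $\BZFC$-Replacement are classical subsets of members of $\IHCL$ and hence land in $\IHCL$. The paper packages this slightly differently: it first notes that $\IHCL \models$ ``every set is classical'' and invokes the earlier theorem that $\PZFC + \forall x(x^!=x^?)$ is $\ZFC$, so that only $\IHCL \models \PZFC$ needs checking, and then verifies Comprehension in detail exactly along the lines of your preliminary lemma; your direct axiom-by-axiom verification of the relativized $\ZFC$ axioms is a fine equivalent route. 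Your treatment of Replacement, and the schematic reading of ``$\IHCL \models \ZFC$'', are also as intended.

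There is, however, one concrete step that fails as written: your witness for Power Set. For a nonempty classical $u \in \IHCL$, the set $\PP^!(u) = \{x : \: !(x \subseteq u)\}$ consists of all $x$ with $x^! \subseteq u$ and $x^? \subseteq u$, and by $\ACLA$ this includes many non-classical sets (e.g.\ $x$ with $x^! = \varnothing$, $x^? = u$). So $\PP^!(u)$ is not a subset of $\IHCL$, does not appear in any $\HCL_\alpha$, and cannot serve as the relativized power set. The repair is easy: since $u \subseteq \IHCL$, every $x \in \PP^!(u)$ already satisfies $x \subseteq \IHCL$, so $x \in \IHCL$ iff $x$ is classical; hence take $v := \{x \in \PP^!(u) : x^! = x^?\}$, which exists by Comprehension, is classical, is contained in $\IHCL$, and contains exactly the hereditarily classical subsets of $u$ — which is what $(\text{Power Set})^{\IHCL}$ requires. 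With this correction (and the analogous care that any witness you form is cut down to its hereditarily classical part when necessary), your proof goes through.
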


\begin{proof} It is clear that $\IHCL \models $ (every set is classical),  so by the discussion in Section \ref{bzfc} it suffices to show that $\IHCL \models \PZFC$.  Most of the axioms are straightforward since none of them postulate the existence of non-classical sets without assuming the existence of non-classical sets.

\p We will only show (Comprehension)$^\IHCL$. Suppose $u$ is a herditarily classical set and $\varphi$ any formula.   It is enough to show that $x := \{y \in u : \varphi^{\IHCL}(y,a_1, \dots, a_n)\}$ is a hereditarily classical set if the parameters $a_1,\dots, a_n$ are hereditarily classical.\footnote{Here we are explicit about parameters $a_1, \dots, a_n$ since otherwise $\varphi$ might fail to be a classical formula.}  Since $x \subseteq u$ and $u \subseteq \IHCL$ we know that $x \subseteq \IHCL$, so it remains to show that $x$ itself is classical.  But from the fact that $y, a_1, \dots a_n$ are classical sets, it follows by an easy induction that $\varphi$ is a classical formula, i.e., $!\varphi \leqqq ? \varphi$. It follows that $x^!=x^?$, so $x$ is classical. \end{proof}

\begin{Cor} If $\BZFC$ is non-trivial, then $\ZFC$ is consistent. \end{Cor}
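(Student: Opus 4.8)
The statement is the contrapositive of the usual ``an inner model yields relative consistency'' argument, and the plan is to carry that argument out inside $\BS$. Suppose, toward a contradiction, that $\ZFC$ is inconsistent, and fix a classical first-order derivation of a contradiction from $\ZFC$ — a proof of $\chi \wedge \lnot\chi$ for some sentence $\chi$, using finitely many $\ZFC$-axioms (or a proof of $\bot$ directly, if the language contains it). I would relativize every formula occurring in this derivation to the class $\IHCL$, translating $\ZFC$'s negation to $\no$; on $\IHCL$ this agrees with $\lnot$, so the choice is immaterial. By Theorem~\ref{HCL}, $\BZFC$ proves $\psi^\IHCL$ for every axiom $\psi$ of $\ZFC$, so the leaves of the relativized derivation are theorems of $\BZFC$. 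It then remains to verify that each relativized instance of a classical logical axiom is $\BS$-provable from $\BZFC$ and that the relativized inference rules (modus ponens, generalization) are sound, so that the whole relativized derivation goes through in $\BZFC$ and yields $\BZFC \vdash \chi^\IHCL \wedge \no\chi^\IHCL$.

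The engine driving this transfer is exactly the observation already used in the Comprehension case of the proof of Theorem~\ref{HCL}: since $\IHCL$ is transitive and consists entirely of classical sets, for each fixed formula $\psi$ one obtains, by an induction on complexity carried out as a $\BZFC$-proof, that $\psi^\IHCL$ — with its free variables constrained to $\IHCL$ — is a classical $\BS$-formula, i.e.\ $\BZFC \vdash \circ\,\psi^\IHCL$. On classical formulas $\BS$ behaves like classical logic: $\no$ collapses to $\lnot$, $\to$ to $\Rightarrow$, and the $!,?$ operators may be dropped (these are the provable $\BS$-facts about $\circ$ recorded in Section~\ref{logic}). Consequently the relativizations of excluded middle and \emph{ex falso}, which are the only classical principles $\BS$ lacks, become available once we add the $\BZFC$-theorems $\circ\,\psi^\IHCL$ as side hypotheses, and the axioms $1$--$14$ of the $\BS$-calculus already supply classical logic for the remaining connectives. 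Finally, from the $\BZFC$-theorems $\chi^\IHCL$, $\no\chi^\IHCL$, and $\circ\,\chi^\IHCL$ one derives $\lnot\chi^\IHCL$, that is $\chi^\IHCL \to \bot$, and modus ponens with $\chi^\IHCL$ gives $\BZFC \vdash \bot$ — contradicting the non-triviality of $\BZFC$. (In the $\bot$-in-the-language case this is immediate, since $(\bot)^\IHCL$ is $\bot$.) Hence $\ZFC$ is consistent.

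The only step that is more than bookkeeping is this transfer lemma, and within it the single delicate point: checking that every relativized instance of a classical logical axiom schema follows in $\BS$ from the classicality of its relativized subformulas. This must be done with some care precisely because $\BS$ is paraconsistent and paracomplete — for a non-classical $\varphi$ neither $\varphi \lor \no\varphi$ nor $(\varphi \wedge \no\varphi) \to \psi$ is valid — so the argument genuinely relies on the fact that restricting to $\IHCL$ forces every formula in sight to be classical. Granting that lemma, the Corollary is a one-line contrapositive of Theorem~\ref{HCL}.
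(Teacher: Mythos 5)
Your argument is correct and is exactly the intended one: the paper states this Corollary without proof as the standard relativization/inner-model argument following Theorem~\ref{HCL}, and you have spelled it out, including the one genuinely non-trivial point (that $\psi^\IHCL$ is provably a classical formula, so $\no$/$\lnot$ and the missing classical principles become available on $\IHCL$), which is the same classicality observation the paper itself uses in proving Theorem~\ref{HCL}. Nothing further is needed.
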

In analogy to Definition \ref{vcheck}, we can now consider an embedding from the universe of $\BZFC$ (which we also refer to as $V$, hopefully not leading to confusion)  to a natural copy of it within $\IHCL$.

\begin{Def}[$\BZFC$] \label{DhatW}  For every set $x$, inductively define: 

$$\hat{x} := ( \{\hat{y} : y \in x^!\}, \{\hat{y} : y \in x^?\}).$$ 

\p  Also let $\hat{V} := \{\hat{x} : x \in V\}$.  For $\hat{x},\hat{y} \in \hat{V}$ define 

\begin{itemize}
 \item $ \hat{x} E^+ \hat{y}$ $\leqqq$ $x\in y$ 
 \item $ \hat{x} E^- \hat{y}$ $\leqqq$ $x\notin y$
 \item $ \hat{x} \approx^+ \hat{y}$ $\leqqq$ $x= y$
  \item $ \hat{x} \approx^- \hat{y}$ $\leqqq$ $x\neq y$
 \end{itemize}

\end{Def}
Now $\hat{x}$ are hereditarily classical sets, $\hat{V}$ is a proper class of hereditarily classical sets, and $E^+, E^-,\approx^+, \approx^-$ are hereditarily classical, class-sized binary relations on $\hat{V}$.

\begin{Lem} [$\BZFC$] The mapping \label{hatW}

$$j: \begin{array}{ccl}V & \rightarrow & \hat{V} \subseteq \IHCL \\ x & \mapsto &\hat{x} \end{array}$$
 is an isomorphism between $(V,\in, \notin, =,\neq)$ and $(\hat{V},E^+,E^-,\approx^+,\approx^-)$. 
 
 \end{Lem}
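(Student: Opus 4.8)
The plan is to verify by induction on rank that $j$ is a well-defined bijection and then to check the four relation clauses separately, one per connective. First I would observe that $x \mapsto \hat{x}$ is total: for every set $x$, the collections $x^!$ and $x^?$ are sets (Lemma \ref{extension}) and are classical, so by an induction on the $\in$-rank (using Foundation, which in $\BZFC$ is the set-induction schema on the realm $x^! \cup x^?$) the sets $\{\hat{y} : y \in x^!\}$ and $\{\hat{y}: y \in x^?\}$ exist and are hereditarily classical; hence $\hat{x} = (\{\hat{y}: y\in x^!\}, \{\hat{y}: y\in x^?\})$ lands in $\IHCL$ once ordered pairs of hereditarily classical sets are themselves hereditarily classical (true since the Kuratowski pair of classical sets is classical). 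Surjectivity onto $\hat{V}$ is immediate from the definition of $\hat{V}$, and injectivity follows from the extensionality-style analysis: if $\hat{x} = \hat{y}$ then $\{\hat{z}: z \in x^!\} = \{\hat{z}: z\in y^!\}$ and likewise for the $?$-extensions, so by induction $x^! = y^!$ and $x^? = y^?$, whence $x = y$ by Lemma \ref{shwa}(4).

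Next I would verify the four biconditionals defining $E^+, E^-, \approx^+, \approx^-$, recalling that by Definition \ref{DhatW} these are stipulated as $\hat{x} E^+ \hat{y} \leqqq x \in y$, etc., so what actually needs checking is that these class relations on $\hat{V}$ \emph{coincide} with the genuine membership/equality relations $\in, \notin, =, \neq$ transported along $j$ — i.e., that $j$ really is a structure-isomorphism and not merely a bijection carrying labelled relations. The substantive content is: (i) $x \in y \leqqq \hat{x} \in^? \hat{y}$-type statements unwinding the ordered-pair coding, i.e. $\hat{x} \in \pi_0(\hat{y})$ iff $\hat{x} \in \{\hat{z} : z \in y^!\}$ iff (by injectivity) $x \in y^!$ iff $!(x \in y)$, and this should match $x \in y$ at the level of \emph{truth}, which it does since $!(x\in y)$ is true iff $x \in y$ is true; (ii) dually, $x \notin y$ is true iff $x \in y^?$ fails to hold — wait, more precisely $\hat{x} \in^- \hat{y}$ in the $\IW$-style coding is governed by the second coordinate, $x \in y^?$, and $x \notin y$ true corresponds to $?(x\in y)$ \emph{false}, i.e. $x \notin y^?$, so one checks the $E^-$ clause matches; (iii) and (iv) for $\approx^+, \approx^-$ one uses Lemma \ref{shwa}(4),(5) to translate $x = y$ and $x \neq y$ into statements about $x^!, x^?, y^!, y^?$ and then across $j$. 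Throughout, the four-valued bookkeeping matters: each of the four relations carries a truth-condition and a falsity-condition, and I would check that $j$ preserves both, using the fact that in $\IHCL$ every formula is classical so $\in^+$ and $\in^-$ are complementary there.

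Finally, I would note that once all four clauses are established, $j$ is an isomorphism of T/F-structures in the sense of Definition \ref{sem1}, so by an induction on formula complexity (paralleling the relativization clauses) $\IHCL \models^T \varphi[\hat{a}]$ iff $V \models^T \varphi[a]$ and similarly for $\models^F$; this transfer is not literally part of the statement but is the payoff, analogous to Lemma \ref{vcheckcheck}.

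\textbf{Main obstacle.} The delicate point is the $?$-extension / negative-extension coordinate: one must be careful that $\hat{x} E^- \hat{y}$ (defined via $x \notin y$) agrees with the ordered-pair membership structure, because $\hat{y}$'s second coordinate codes $y^?$ (the \emph{complement} of the negative extension), so $x \notin y$ being true means $x \in y$ is false means $?(x \in y)$ is false means $x \notin y^?$ — a triple negation one can easily get backwards. Keeping straight which coordinate of the pair carries truth-data versus falsity-data, and that $j$ must respect \emph{both} $\models^T$ and $\models^F$ simultaneously (since these are independent in $\BS$), is where essentially all the care is needed; the rank-induction and bijection parts are routine, exactly as in Lemma \ref{vcheckcheck}.
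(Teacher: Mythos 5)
Your proposal is correct, but it does a good deal more than the paper, which disposes of this lemma with ``follows immediately from the definitions.'' The reason the paper can be so brief is that $E^+,E^-,\approx^+,\approx^-$ are \emph{stipulated} in Definition \ref{DhatW} as the transports of $\in,\notin,=,\neq$ along $j$, so preservation of the four relations is automatic; the only substantive points are exactly the two you handle at the start, namely that $\hat{x}$ is hereditarily classical (so $\hat{V}\subseteq\IHCL$, using that the pair of classical sets is classical) and that $j$ is injective/well-defined (your rank induction giving $\hat{x}=\hat{y}\Rightarrow x^!=y^!\land x^?=y^?\Rightarrow x=y$ via Lemma \ref{shwa}(4), plus the converse direction from substitution of equals). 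By contrast, your items (i)--(ii) and your ``main obstacle''---matching $E^+$ and $E^-$ against the first and second coordinates of the pair $\hat{y}=(\{\hat{z}:z\in y^!\},\{\hat{z}:z\in y^?\})$---are not needed for this lemma at all, since $E^-$ is not defined through the coding; that bookkeeping (including the triple negation $x\notin y$ true iff $x\notin y^?$) is precisely the content of the subsequent lemma $\IW^{\IHCL}=\hat{V}$ and of the $\IW$-relativization, where your computation is indeed the right one. So nothing you check is wrong, and your closing transfer-of-satisfaction remark is a fine (optional) payoff analogous to Lemma \ref{vcheckcheck}; the difference is only that the paper locates the entire content of this particular lemma in the two routine checks you dispatch first.
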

 
 \begin{proof} Follows immediately from the definitions. \end{proof}
 
 \section{Bi-interpretability} \label{biinterpretability}
 
We already know that $\ZFC$ in $\BZFC$ are {\it mutually interpretable}, i.e., each theory can construct a natural model for the other.  In fact we prove more than that:
 
 \begin{Thm} \label{bi-int} $\BZFC$ and $\ZFC$ are bi-interpretable. \end{Thm}
  
Here the term {\it bi-interpretability} may either be understood semantically,  saying that the  model constructions interpreting the other theory are reversible up to isomorphism, or syntactically, saying that a sentence 
is a consequence of one theory if and only if its translation (relativisation) is a consequence of the other,  and vice versa, in the corresponding logics.  Theorems \ref{ee} and \ref{eee} together with results from the previous sections concern the semantic understanding while Corollaries \ref{ff} and \ref{fff} refer to the syntactic one.

  \bigskip 
  In Lemma \ref{vcheckcheck} we defined an isomorphism  $i: x \mapsto \check{x}$ from $V$ and $\check{V}$, so it remains only to show that  $\check{V}$ is the same thing as what $\IW$ believes $\IHCL$ to be (provably in $\ZFC$). 
  
  \begin{Lem}[$\ZFC$] \label{ee} $\IHCL^\IW = \check{V}$. \end{Lem}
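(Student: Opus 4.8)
The plan is to prove the equality by a double induction on the ordinal stages, showing $\HCL^\IW_\alpha = \check{V}_\alpha$ for every ordinal $\alpha$, where $\check{V}_\alpha := \{\check{x} : x \in V_\alpha\}$ (the image under $i$ of the $\alpha$-th level of the von Neumann hierarchy). Since $\IHCL^\IW = \bigcup_\alpha \HCL^\IW_\alpha$ and $\check{V} = \bigcup_\alpha \check{V}_\alpha$ by Lemma~\ref{vcheckcheck}, this suffices. The base case and limit case are immediate from the definitions (both sides are $\varnothing$ at stage $0$ and unions of previous stages at limits), so the entire content is in the successor step.

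For the successor step, I would unwind what $\HCL^\IW_{\alpha+1}$ means. By definition, $\HCL^\IW_{\alpha+1}$ consists of those $X \in \IW$ that $\IW$ believes to be a classical subset of $\HCL^\IW_\alpha$. First I would check that, for $X = (a,b) \in \IW$, the statement ``$X$ is classical'' relativized to $\IW$ — i.e. $(\forall y \; {\circ}(y \in X))^{\IW,T}$ — holds exactly when $a = b$; this follows by computing $X^!$ and $X^?$ inside $\IW$ using the definitions of $\in^+$ and $\in^-$ from Definition~\ref{defW}, since $(a,b)^! = \{z \in \IW : z \in a\} = a$ and $(a,b)^? = \{z \in \IW : z \in b\} = b$ (using that $a, b \subseteq \IW$ by Lemma~\ref{helpful}(3)). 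Thus the hereditarily classical sets of $\IW$ are precisely the ``diagonal'' pairs $(a,a)$. Next, ``$X \subseteq \HCL^\IW_\alpha$'' relativized to $\IW$ unwinds, via Lemma~\ref{shwa}(1) computed in $\IW$, to $X^! \subseteq \HCL^\IW_\alpha$ and $X^? \subseteq \HCL^\IW_\alpha$, i.e. (for diagonal $X = (a,a)$) simply $a \subseteq \HCL^\IW_\alpha$. By the induction hypothesis $\HCL^\IW_\alpha = \check{V}_\alpha$, so $X \in \HCL^\IW_{\alpha+1}$ iff $X = (a,a)$ with $a \subseteq \check{V}_\alpha$. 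On the other hand, by Definition~\ref{vcheck}, $\check{V}_{\alpha+1} = \{\check{x} : x \subseteq V_\alpha\} = \{(\{\check{y}:y\in x\}, \{\check{y}:y\in x\}) : x \subseteq V_\alpha\}$, which is exactly $\{(a,a) : a \subseteq \check{V}_\alpha\}$ since $a \mapsto \{\check{y} : \check{y} \in a\}$ ranges over all subsets of $\check{V}_\alpha$ as $x$ ranges over subsets of $V_\alpha$. Hence the two sides agree at stage $\alpha+1$.

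The main obstacle I anticipate is bookkeeping rather than conceptual: one must be careful that the relativization $(\,\cdot\,)^{\IW,T}$ of the defining formula for $\IHCL$ — which internally uses $\subseteq$, $x^!$, $x^?$, and the classicality operator ${\circ}$, all of which are themselves defined notions involving $!$, $?$, $\lnot$ and $\to$ — genuinely reduces to the elementary set-theoretic conditions claimed above. This is where I would invoke the computations of $\in^+$, $\in^-$ inside $\IW$ together with Lemma~\ref{shwa} and the truth tables of Table~\ref{ttnot}, checking in particular that for a diagonal pair $(a,a)$ the statement $y \in (a,a)$ always has truth value $1$ or $0$ in $\IW$, so that all the $!$'s and $?$'s collapse. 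Once that reduction is in hand, the induction goes through as sketched, and the lemma follows.
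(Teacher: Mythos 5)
Your proposal is correct, and at its core it runs the same computation as the paper: the hereditarily classical elements of $\IW$ are exactly the ``diagonal'' pairs, because $\IW$-classicality of $(a,b)$ amounts to $a=b$ and the relativized subset condition amounts to inclusion of the first coordinate in the previous stage. The paper's proof does precisely this, but organizes it as a double inclusion proved by $\in$-induction, using the characterization that $x \in \IHCL$ iff $x$ is classical and $x \subseteq \IHCL$, rather than as a level-by-level identity $\HCL_\alpha^{\IW} = \check{V}_\alpha$. The one place where your scaffolding costs something extra is the opening assertion $\IHCL^\IW = \bigcup_{\alpha \in \Ord} \HCL^\IW_\alpha$ with $\alpha$ ranging over \emph{external} ordinals: the recursion defining $\IHCL$ is indexed by ordinals of $\BZFC$, so its relativization to $\IW$ is indexed by $\IW$'s internal ordinals, and you must match these with the ordinals of the ambient $\ZFC$ universe (e.g.\ show that every $\IW$-ordinal is $\check{\alpha}$ for a unique external $\alpha$, using that $\in^+$ is well-founded on $\IW$). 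This is routine but not ``immediate from the definitions'', and it is uncomfortably close to a special case of the lemma itself (ordinals are hereditarily classical), so it should be discharged separately; avoiding internal ordinals altogether is exactly what the paper's $\in$-inductive double-inclusion formulation buys over your stage-wise decomposition. Apart from that, your successor-step computations (classicality in $\IW$ iff $a=b$; $\check{V}_{\alpha+1}$ consisting exactly of the diagonal pairs over $\check{V}_\alpha$, which uses injectivity of $x \mapsto \check{x}$ from Lemma \ref{vcheckcheck}) are exactly the content of the paper's two inclusions, so the argument goes through once the ordinal/stage matching is in place.
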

  
  \begin{proof}  To show $\supseteq$  take $\check{x} \in \check{V}$. By construction $\check{x} = ( \{\check{y} : y \in x\}, \{\check{y} : y \in x\})$.  Inductively, we may assume that each $\check{y}$ appearing above is in $\IHCL^\IW$, therefore $\IW \models \check{x} \subseteq \IHCL$. Moreover, $\check{x}$ has the same !-extension as ?-extension, therefore $\IW \models (\check{x}$ is classical). Together, this implies $\IW \models \check{x} \in \IHCL$.
  
 \p To show $\subseteq$, suppose $x \in \IHCL^\IW$, i.e., $x \in \IW$ and $\IW \models (x$ is hereditarily classical). Let $a,b \in \IW$ be such that $x=(a,b)$. Since $\IW \models x$ is classical, in particular $\IW \models x^! = x^?$,  therefore $a=b$.  Moreover, $\IW \models x \subseteq \IHCL$ which implies that $a \subseteq \IHCL^\IW$.  Inductively, we may assume that every element in $a$ is of the form $\check{z}$ for some $z \in V$. Let $\tilde{a} := \{z : \check{z} \in a\}$. Then $x= ( \{\check{z} : z \in \tilde{a}\}, \{\check{z} : z \in \tilde{a}\}) = (a,a)$ by construction.  But then, by definition, 
 $x = ({\tilde{a}}\check{)} $. 
   \end{proof}
   
  \begin{Cor} \label{ff} $\ZFC \vdash \varphi\;\;$ iff $\;\; \BZFC \vdash_\BS \left( \IHCL \models \varphi \right)$. \end{Cor}
  
 \begin{proof} If $\ZFC \vdash \varphi$ then clearly  $\BZFC \vdash_\BS \left( \IHCL \models \varphi \right)$.  Suppose  $\BZFC_\BS \vdash \left( \IHCL \models \varphi \right)$.  Then  $\ZFC \vdash \left( \IW \models \left( \IHCL \models \varphi \right) \right)$,  i.e., $\ZFC \vdash (\IHCL^\IW  \models \varphi )$. But by the above we have $\IHCL^\IW = \check{V} \cong V$, so $\ZFC \vdash \varphi$. \end{proof}
  
For the reverse direction, if we  start in $\BZFC$ we already have the $j: x \to \hat{x}$ from Definition \ref{DhatW}, so it remains to show that $\hat{V}$ is the same as what   $\IHCL$ believes $\IW$ to be, provably in $\BZFC$.
 
 
   \begin{Lem}[$\BZFC$] \label{eee} $\IW^\IHCL = \hat{V}$. \end{Lem}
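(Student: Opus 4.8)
The plan is to mirror the proof of the preceding lemma ($\IHCL^\IW = \check V$), with the roles of the two universes interchanged. Throughout we argue in $\BZFC$, using that $\IHCL \models \ZFC$ (Theorem \ref{HCL}); consequently every fact about the construction of $\IW$ from Section \ref{model} is available \emph{inside} $\IHCL$, in particular Lemma \ref{helpful} relativized to $\IHCL$. A preliminary observation that keeps the bookkeeping painless: $\PP(\cdot)$, the Cartesian product, and the Kuratowski ordered pair, when computed in $\IHCL$, take hereditarily classical sets to hereditarily classical sets and agree there with the corresponding ambient notions, so $z = (a,b) \in \IW^\IHCL$ may be read with the ordered pair understood in the ordinary sense, and this matches the ordered pairs appearing in Definition \ref{DhatW}.

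For the inclusion $\hat V \subseteq \IW^\IHCL$ I would induct on the rank of $x$. Given $\hat x = (\{\hat y : y \in x^!\}, \{\hat y : y \in x^?\})$, the two coordinates are sets (Lemma \ref{extension} together with Replacement), they are hereditarily classical, and by the induction hypothesis they are subclasses of $\IW^\IHCL$; hence by Lemma \ref{helpful}(3) relativized to $\IHCL$ their ordered pair $\hat x$ lies in $\IW^\IHCL$. No appeal to $\ACLA$ is needed in this direction.

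For the converse, $\IW^\IHCL \subseteq \hat V$, take $z \in \IW^\IHCL$ and, arguing by induction on its rank in $\IHCL$, use Lemma \ref{helpful}(3)$^\IHCL$ to write $z = (a,b)$ with $a,b \in \IHCL$ and $a,b \subseteq \IW^\IHCL$. By the induction hypothesis every element of $a$, and of $b$, has the form $\hat y$; since $x \mapsto \hat x$ is injective (Lemma \ref{hatW}) we may set $\tilde a := \{y : \hat y \in a\}$ and $\tilde b := \{y : \hat y \in b\}$, which are sets by Replacement. These sets are \emph{classical}: for each $y$ the statement ``$\hat y \in a$'' is classical because $a \in \IHCL$, so the defining strong bi-implication $y \in \tilde a \Leftrightarrow \hat y \in a$ forces ``$y \in \tilde a$'' to be classical, and likewise for $\tilde b$. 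This is precisely what licenses the use of $\ACLA$: by Theorem \ref{aclaacla} there is a set $x$ with $x^! = \tilde a$ and $x^? = \tilde b$, and unwinding the definitions of $\tilde a$, $\tilde b$ and of $\hat x$ (using the induction hypothesis once more to see $\{\hat y : y \in \tilde a\} = a$ and $\{\hat y : y \in \tilde b\} = b$) yields $\hat x = (a,b) = z$.

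The only routine parts are the two ``unwinding'' computations and the verifications that the classes displayed are genuinely sets. The single conceptual point — and the one real departure from the classical dual — is that $\ACLA$ is indispensable for the $\subseteq$ direction: without it $\IHCL$ could coincide with $V$, yet $\IW^\IHCL$ would still contain ``off-diagonal'' pairs $(a,b)$ with $a \ne b$ corresponding to no set at all. Accordingly, the step I expect to require the most care is establishing that $\tilde a$ and $\tilde b$ are classical, since that is exactly the hypothesis needed to invoke Theorem \ref{aclaacla}.
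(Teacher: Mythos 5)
Your proposal is correct and follows essentially the same route as the paper: both directions use Lemma \ref{helpful}(3) relativized to $\IHCL$ together with an induction, and the converse inclusion passes to the classical sets $\tilde a$, $\tilde b$ and invokes $\ACLA$ (Theorem \ref{aclaacla}) to realize them as $x^!$ and $x^?$ of a single set. Your extra remarks (the agreement of ordered pairs in $\IHCL$ with the ambient ones, and the explicit verification that $\tilde a$, $\tilde b$ are classical and are sets) only flesh out steps the paper treats as immediate.
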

  
  \begin{proof} Suppose $\hat{x} \in \hat{V}$.  Then $\hat{x} = ( \{\hat{y} : y \in x^!\}, \{\hat{y} : y \in x^?\})$,  and since every $\hat{y}$ appearing here is in $ \hat{V}$, inductively we can assume that it is also in $\IW^\IHCL$.  By definition $\hat{x}$ is hereditarily classical and we have $\IHCL \models (\hat{x} = (a,b)$ and $a,b \subseteq \IW)$.  By Lemma \ref{helpful} (3) applied in $\IHCL$, this means $\IHCL \models \hat{x} \in \IW$.
  
 \p Conversely,  suppose $w \in \IW^\IHCL$.  Again by Lemma \ref{helpful} (3) we know that  $\IHCL \models (w = (u,v)$ and $u,v \subseteq \IW)$. Then $u,v \subseteq \IW^\IHCL$ so, inductively, we have $u,v \subseteq \hat{V}$.  Therefore, let $\tilde{u} = \{y : \hat{y} \in u\}$ and $\tilde{v} = \{y : \hat{y} \in v\}$. Then $\tilde{u}$ and $\tilde{v}$ are classical sets (because $u,v$ are), so by  $\ACLA$ there exists a set $x$ such that $x^!  = \tilde{u}$ and $x^? = \tilde{v}$.  Then we have 
 $$\hat{x}  \;= \;  (\{\hat{y} : y \in x^!\} , \{\hat{y} : y \in x^?\}) \;= \;  (\{\hat{y} : y \in \tilde{u}\} , \{\hat{y} : y \in \tilde{v}\}) \;= \; (u,v) \; = \; w$$
 This shows that $w \in \hat{V}$.   \end{proof}

  \begin{Cor} \label{fff} $\BZFC \vdash_\BS  \varphi\;\;$ iff $\;\; \ZFC \vdash \left( \IW \models \varphi \right)$. \end{Cor}

 \begin{proof} If $\BZFC \vdash_\BS \varphi$ then clearly   $\ZFC \vdash \left( \IW \models \varphi \right)$.  Suppose  $\ZFC \vdash  \left( \IW \models \varphi\right)$.  Then $\BZFC \vdash_\BS \left( \IHCL \models \left( \IW \models \varphi\right)\right)$, so $\BZFC \vdash_\BS \left( \IW^\IHCL \models \varphi \right)$. But  in $\BZFC$ we have that $\IW^\IHCL = \hat{V} \cong V$, giving us $\BZFC \vdash_\BS \varphi$. \end{proof}

\section{Tarski semantics in $\PZFC$} \label{modeltheory}

A discussion that sometimes arises in the philosophy of logic is the extent to which the meta-theory affects the formal theory under consideration. For example,  the recursive definition of the negation and disjunction in Tarski semantics means that
$$\M \models \varphi \lor \lnot \varphi \;\;\;\; \text{iff} \;\;\;\; \M \models \varphi \text{ or } \M \not\models \varphi.$$
Thus  {\it excluded middle} is a tautology in classical logic only because {\it excluded middle} is covertly assumed to hold  in the meta-theory.  Likewise,  since we have
$$\M \models \varphi \land \lnot \varphi \;\;\;\; \text{iff}  \;\;\;\; \M \models \varphi \text{ and } \M \not\models \varphi,$$
a contradiction $\varphi \land \lnot \varphi$ is classically unsatisfiable only because  $\M \models \varphi$ and $\M \not\models \varphi$ cannot both be true in the meta-theory.
 
 Of course,  non-classical logics may be set up by considering alternative (non-Tarski) semantics, so that the formal system in question  exhibit  non-classicality while the meta-theory remains classical. Such is the case, for example,  with Kripke semantics for intuitionistic logic, and  the T/F-models for $\BS$ from Section \ref{logic}.  But this does not shed any light on the question how the logic generated by {\it Tarski} semantics (in a way,  the most fundamental  and semantics) is affected by the logic that holds in the meta-theory.
 
  Questions of this sort have been considered, for example, by  Shapiro in \cite[Chapters 6 and 7]{Shapiro-Varieties} or by Bacon in \cite{Andrew-Bacon},  and some related concerns about logical pluralism are voiced in \cite{waterplants, RobertP}.  No doubt, early logicians like G\"odel and Tarski  would have been aware of this issue as well.  Nevertheless, although the phenomenon of meta-logic affecting formal logic seems to be a well-known concern in philosophy of language and theories of truth,  we do not know of a rigorous {\it mathematical} analysis of this phenomenon,  certainly not in the context of paraconsistency.  Indeed, such an analysis would require a sufficiently developed theory of paraconsistent 
 mathematics to begin with, in which the relevant model-theoretic definitions and proofs might be formalised and carried out. Now that we have $\BZFC$ at our disposal, we are in  a position to do exactly that.
 
 \bigskip
 The setup is as follows: in  $\PZFC$,  we  can define  Tarski semantics by employing the usual induction definition (although predicates are interpretations  non-classically). This gives rise to a semantic  consequence relation, which we shall denote by $\models$,  and  one can study the {\it logic} which is sound and complete with respect to this relation. It turns out that in $\BZFC$ (i.e., in the presence of the  Anti-Classicality Axiom), this logic is precisely $\BS$ (Theorem \ref{tarski}).  On the other hand,  in $\PZFC \; + $ ``all sets are classical'', it is simply classical logic. 
 
  In fact, we can be more specific by looking at only the paraconsistent, or only the paracomplete, fragment of the Anti-Classicality Axiom.  In $\PZFC$, one can then show that the logic generated by Tarski semantics satisfies the exact same level of non-classicality as we assume to hold in the meta-theory (Theorems \ref{tt1} and \ref{tt2}).

  \begin{Def}[$\PZFC$]  \label{weirdsemantics} The syntax of first order logic is assumed to be coded by  {\it classical} sets.  A Tarski model $\M$ consists of a {\it classical set} $M$ as a domain, and (not necessarily classical) interpretations for all constant and relation symbols.  For simplicity we will leave out  function symbols.  Inductively we define: 

\begin{enumerate}

\item $\M \models (t=s)[a,b] \iff a=b$.

\item $\M \models R(t_1, \dots, t_n)[a_1, \dots, a_n] \iff R^\M(a_1, \dots a_n)$.

 \item $\M \models  \no \varphi \iff   \M \not\models \varphi$. \footnote{This is an abbreviation of $\no ( \M \models \varphi)$.}
 
 \item $\M \models  \varphi \land \psi   \iff \M \models \varphi$ and $\M \models \psi$.

 \item $\M \models  \varphi \lor \psi   \iff \M \models \varphi$ or $\M \models \psi$.

 \item $\M \models  \varphi \to \psi   \iff$ $( \M \models \varphi  \to   \M \models \psi)$.

 \item $\M \models  \varphi \leqqq \psi   \iff$ ($\M \models \varphi   \leqqq \M \models \psi)$.

  \item $\M \models  \exists x \varphi(x)    \iff \M \models  \varphi[a]$ for some $a \in M$.

  \item  $\M \models  \forall x \varphi(x)    \iff \M \models  \varphi[a]$ for all $a \in M$.
 
 \item  $\M \models  \bot \iff \bot$.

\end{enumerate}


\p For a set of formulas $\Sigma$ and a formula $\varphi$,  the {\it Tarski semantic consequence} relation is defined by $$\Sigma \models \varphi$$ if for every Tarski model $\M$, we have $(\M \models \Sigma \; \to \; \M \models \varphi).$
\end{Def}


For clarity, we will always refer to the above  as {\it Tarski models} and   {\it Tarski semantics}, to keep them apart from T/F-semantics from Definition \ref{mainsemantics}. We will also reserve the notation $\models$ for the Tarski satisfaction and consequence relation and use $\models^T$ and $\models^F$ when refering to T/F-semantics.

\begin{Remark} $\;$ 
\begin{enumerate}

\item
Definition \ref{weirdsemantics} should be understood in the framework of $\PZFC$, with all equivalences being strong.  The interpretations of relations $R^\M$ are not necessarily classical. For example,  it could happen that $R^\M(a)$ is both true and false,  in which case we would have  $\M \models R(x)[a]$ and $\M\not\models R(x)[a]$,  and subsequently $\M \models (R(x) \land \no R(x)) [a]$.  

\item The reader may easily verify that the defined connectives are also translated the way we would expect, namely: 
$$\M \models \lnot \varphi \iff \lnot( \M \models \varphi)$$
$$\M \models\; ! \varphi \iff \; !( \M \models \varphi)$$
$$\M \models \; ? \varphi \iff \; ?( \M \models \varphi).$$
Had we, for example, chosen $\lnot$ as the primitive connective in $\BS$ instead of $\bot$,  then Definition \ref{weirdsemantics} would have given rise to the same logic.

\item Linguistically, it is useful to distinguish {\it truth} in a Tarski model from {\it satisfaction} by a Tarski model.   We say ``$\M$ satisfies $\varphi$'' to express ``$\M \models \varphi$'' and ``$\varphi$ is true in $M$'' to express $!(\M \models \varphi)$ (which is the same as ``$\M \models \;!\varphi$'').   Satisfaction captures the entire truth value of $\varphi$ in $\M$, and we can use the notation concerning truth values from Section \ref{bzfc} to define the truth value of $\varphi$ in a Tarski model $\M$:
$$\llbracket \varphi \rrbracket_\M := \{\varnothing : \M \models \varphi\}$$ \end{enumerate}
\end{Remark}


 \begin{Thm}[$\BZFC$]\label{tarski} $\BS$ is sound and complete with respect to Tarski semantics:
 
 $$\Sigma \models \varphi  \;\; \leqqq \;\;   \Sigma \vdash_\BS \varphi.   $$
 
 %
 
 \end{Thm}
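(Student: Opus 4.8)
The plan is to reduce the statement to the classical soundness and completeness theorem for $\BS$ with respect to T/F-semantics (Lemma \ref{completeness}), which holds in the classical meta-theory $\ZFC$, and then transport it along the bi-interpretability established in Section \ref{biinterpretability}. The key observation is that a \emph{Tarski model} in the sense of Definition \ref{weirdsemantics}, consisting of a classical domain $M$ together with possibly non-classical relation-interpretations $R^\M$, carries exactly the same data as a \emph{T/F-model} in the sense of Definition \ref{sem1}: given a Tarski model $\M$, set $(R^\M)^+ := \{\bar a : \M \models R(\bar x)[\bar a]\} = \{\bar a : \; !(R^\M(\bar a))\}$ and $(R^\M)^- := \{\bar a : \M \not\models R(\bar x)[\bar a]\} = \{\bar a : \no (R^\M(\bar a))\}$, and conversely. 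One then checks, by induction on the complexity of $\varphi$, that $\M \models \varphi$ (Tarski) if and only if the associated T/F-model satisfies $\varphi$ in the combined sense ``$\models^T$ and not $\models^F$''—more precisely that the truth value $\llbracket\varphi\rrbracket_\M$ computed via Tarski semantics in $\BZFC$ agrees with $\llbracket\varphi\rrbracket^\M$ computed via the T/F-clauses. Each inductive clause is a routine comparison of Definition \ref{weirdsemantics} with Definitions \ref{mainsemantics} and the truth tables in Table \ref{ttables}; the negation clause uses that $\M\models\no\varphi \leqqq \M\not\models\varphi$ together with the $\BS$-validities relating $\no$, $!$ and $?$.

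Once this dictionary is in place, the strategy is to run it inside the model $\IHCL$ of Theorem \ref{HCL}, which satisfies $\ZFC$, and inside $\IW$ of Theorem \ref{W}. First I would prove the statement \emph{relativized to $\IHCL$}: working in $\BZFC$, the class $\IHCL$ believes $\ZFC$, hence by the classical Lemma \ref{completeness} (which is a theorem of $\ZFC$, applied inside $\IHCL$) it believes that $\BS$ is sound and complete with respect to T/F-semantics. But from $\IHCL$'s point of view every set is classical, so ``T/F-model'' and ``Tarski model with classical everything'' coincide there, and the dictionary of the first paragraph shows that $\IHCL$'s notion of Tarski semantic consequence agrees with $\BS$-provability. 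Thus $\BZFC \vdash (\IHCL \models (\Sigma\models\varphi \leqqq \Sigma\vdash_\BS\varphi))$. The final step is to transfer this \emph{up from $\IHCL$ to $V$}: since $\vdash_\BS$ is an arithmetical (hence absolute, classical) relation, $\Sigma\vdash_\BS\varphi$ holds in $V$ iff it holds in $\IHCL$; and for $\Sigma\models\varphi$ one must argue that a counterexample Tarski model in $V$ yields one in $\IHCL$ and vice versa. Here the point is precisely that $\BZFC$'s Tarski models can have non-classical relation-interpretations, and by $\ACLA$ every T/F-model living in $\IHCL$ can be \emph{realized} as a genuine Tarski model in $V$ with the prescribed positive and negative extensions (this is where Theorem \ref{aclaacla} is used); conversely any Tarski model in $V$ induces, via its $!$- and $?$-extensions, a T/F-model inside $\IHCL$ with the same satisfaction behaviour.

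Concretely, I would organize the proof as: (i) the model-correspondence lemma: Tarski models (Def.\ \ref{weirdsemantics}) are in value-preserving bijection with T/F-models (Def.\ \ref{sem1}), with $\llbracket\varphi\rrbracket_\M = \llbracket\varphi\rrbracket^{\M}$ for all $\varphi$, proved by induction on $\varphi$; (ii) soundness, $\Sigma\vdash_\BS\varphi \;\to\; \Sigma\models\varphi$, obtained by verifying that each axiom of the calculus of Section \ref{logic} gets value $1$ in every Tarski model and that the three inference rules preserve truth, using (i) to read off the T/F truth tables—this direction does not even need $\ACLA$; (iii) completeness, $\Sigma\models\varphi \;\to\; \Sigma\vdash_\BS\varphi$: assuming $\Sigma\not\vdash_\BS\varphi$, invoke classical completeness (Lemma \ref{completeness}) inside $\IHCL$ to get a T/F-model of $\Sigma\cup\{\neg!\varphi\}$ in $\IHCL$, then use $\ACLA$ to build from it an actual Tarski model $\M$ in $V$ with $\M\models\Sigma$ and $\M\not\models\varphi$, witnessing $\Sigma\not\models\varphi$. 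The main obstacle I expect is step (iii), and specifically the careful bookkeeping needed to turn a T/F-model—a pair of classical extensions $(R^+,R^-)$ for each symbol—into a single non-classical relation $R^\M$ on a classical domain with exactly those $!$- and $?$-extensions; this is exactly the content guaranteed by Theorem \ref{aclaacla}, applied relation by relation, but one must check that it can be done uniformly and that the resulting Tarski satisfaction relation matches the original T/F-satisfaction, for which one re-invokes the induction of step (i). A secondary subtlety is the treatment of $\bot$ and classical negation $\neg$: one must confirm that the clause $\M\models\bot \leqqq \bot$ together with the defined $\neg\varphi := \varphi\to\bot$ behaves in Tarski semantics exactly as in T/F-semantics, so that the completeness argument, which naturally produces a model of $\Sigma \cup \{\neg!\varphi\}$ rather than of $\Sigma\cup\{\no\varphi\}$, goes through.
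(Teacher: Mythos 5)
Your overall strategy---reduce Tarski-completeness to T/F-completeness via two model-conversion constructions, with $\ACLA$ doing the work in the T/F-to-Tarski direction---is the same as the paper's, which proves an internal completeness lemma for T/F-semantics in $\PZFC$ (possible because T/F-models there are required to be classical objects, so the classical proof just repeats) and then two simulation lemmas: Tarski $\to$ T/F in $\PZFC$, and T/F $\to$ Tarski in $\BZFC$. Your detour through $\IHCL$ to justify the T/F-completeness step is workable in principle (it needs an absoluteness remark that T/F-satisfaction for a classical model computed inside $\IHCL$ agrees with its computation in $V$), but your intermediate claim that ``$\IHCL$'s notion of Tarski semantic consequence agrees with $\BS$-provability'', i.e.\ $\BZFC \vdash (\IHCL \models (\Sigma\models\varphi \leqqq \Sigma\vdash_\BS\varphi))$, is false: from $\IHCL$'s point of view every set is classical, so Tarski semantics relativized to $\IHCL$ yields \emph{classical} first-order consequence, not $\BS$ (the paper makes exactly this point: $(\varphi\land\no\varphi)\to\bot$ is Tarski-valid when all sets are classical but not $\BS$-provable). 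Your concrete step (iii) does not actually need that claim---it only needs the T/F-countermodel from $\IHCL$ plus the conversion---so this is repairable, but as stated it is wrong.

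The more substantive gap is in the T/F-to-Tarski conversion itself. You propose to realize the prescribed positive and negative extensions ``relation by relation'' via Theorem \ref{aclaacla}, but this cannot work for equality: in a Tarski model the interpretation of $=$ is not free, since Definition \ref{weirdsemantics} forces $\M \models (t=s)[a,b] \leqqq a=b$ and $\M \not\models (t=s)[a,b] \leqqq a\neq b$, whereas the T/F-countermodel produced by completeness comes with an arbitrary classical symmetric relation $=^-$ that need not have anything to do with meta-theoretic inequality. The paper's Lemma \ref{two} handles this by using $\ACLA$ to build a \emph{non-classical equivalence relation} $\equiv$ with $a\equiv b \leqqq a=b$ and $a\not\equiv b \leqqq a=^-b$, passing to the quotient $N^4=\{[a]:a\in N\}$, and proving that $[a]=[b]\leqqq a=b$ while $[a]\neq[b]\leqqq a=^-b$; the non-classical sets $[a]$ are exactly what makes the fixed equality clause of Tarski semantics reproduce the given $=^-$. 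Without this (or an equivalent device) your step (iii) does not go through. A smaller point of the same flavour: in your dictionary the negative extension $\{\bar a : \no R^\M(\bar a)\}$ must be cut out by a classical formula (the paper uses $\lnot\,?R(\bar a)$) so that the resulting T/F-model has classical interpretations as required by the $\PZFC$ definition of T/F-model.
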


\begin{Remark}
We should note here that the bi-implication is not a strong one, i.e., soundness and completeness only talks about  {\it truth} in a model, not {\it satisfaction} by the model.  Indeed, note that $\vdash_\BS$ is a classical relation, while the Tarski-consequence relation $\models$ is not. For example, take any $\varphi$ such that some model $\N \models \varphi \land \no \varphi$, and consider the classical excluded middle $\varphi \lor \lnot \varphi$. Then for every $\M$ we have $\M \models \varphi \lor \lnot \varphi$, but one can verify that $\N \not\models \varphi \lor \lnot \varphi$. This means that ``$\varphi \lor \lnot \varphi$'' both {\it is} and {\it is not} a tautology according to Tarski semantics.



 
\end{Remark}
  We will now prove Theorem \ref{tarski}.  While this could be done  by  a  classical soundness and completeness proof  within $\BZFC$,  here we will take a short-cut by recalling   that $\BS$ is sound and complete with respect to T/F-semantics (Lemma \ref{completeness}),  arguing that this fact remains true also when adapted to $\PZFC$,  and then showing that T/F-models can be `simulated' by Tarski models, and vice versa. This `simulation' principle seems to be an interesting  phenomenon in its own right. 

First we   adapt Definition \ref{mainsemantics}:


\begin{Def}[$\PZFC$] A T/F-model is defined as in Definition \ref{mainsemantics}, with the added condition that the domain $M$, all interpretations $(R^\M)^+$ and $(R^\M)^-$ are classical subsets of $M^n$, and $=^+$ and $=^-$ are classical subsets of $M\times M$. Moreover,  it is required that for $a,b \in M$ we have $a =^+ b \; \leqqq \;\; !(a=b).$\footnote{In other words, $=^+$ is a classical relation such that $a=^+ b$ is true precisely when $a=b$ is {\it true}, and false precisely when $a=b$ is  {\it not true}.   On the  other hand $=^-$ is a classical and symmetric binary relation, but need not have anything to do with the meta-theoretic  $a\neq b$.}

\end{Def}

\begin{Lem}[$\PZFC$] \label{bs44}   $\BS$ is sound and complete with respect to T/F-semantics. \end{Lem}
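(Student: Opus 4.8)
The plan is to deduce this from the classical completeness theorem (Lemma~\ref{completeness}) by relativising to the inner model $\IHCL$ of hereditarily classical sets, rather than re-running the Henkin argument from scratch inside $\PZFC$. The crucial observation is that, under the $\PZFC$-reformulation of ``T/F-model'' just given, every T/F-model is coded by a \emph{classical} set: its domain $M$ is classical, and $(R^\M)^+$, $(R^\M)^-$, $=^+$, $=^-$ are all required to be classical subsets of the appropriate $M^n$. Hence every T/F-model (and likewise every formula and every set of formulas, which are coded classically by assumption) is an element of the transitive proper class $\IHCL$. Conversely, since for $a,b\in\IHCL$ we have $!(a=b)\leqqq(a=b)$, the requirement ``$=^+$ coincides with true equality'' read \emph{inside} $\IHCL$ is exactly the requirement $a=^+b\leqqq\; !(a=b)$ of the $\PZFC$-definition; so the $\PZFC$-T/F-models are precisely the objects that $\IHCL$ recognises as T/F-models in the sense of Definition~\ref{mainsemantics}.

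Next I would verify the relevant absoluteness facts. By induction on the complexity of $\varphi$, for a T/F-model $\M$ and formula $\varphi$ the relations $\M\models^T\varphi$ and $\M\models^F\varphi$ are absolute between the universe and $\IHCL$: the atomic clauses only ask about membership in classical sets (and about $=^+,=^-$, which $\IHCL$ sees correctly by the previous paragraph), and the inductive clauses add only Boolean combinations and quantifiers ranging over the domain $M\subseteq\IHCL$, all of which $\IHCL$ evaluates correctly. Likewise the relation ``$\varphi$ is derivable from $\Sigma$ in the calculus of Section~\ref{logic}'', and hence the semantic relation $\vdash_\BS$ once we know completeness, is absolute, being a $\Sigma_1$-style assertion whose only candidate witnesses are finite sequences of formulas, all of which already lie in $\IHCL$. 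Consequently the statement $\mathsf{SC}$ that ``$\BS$ is sound and complete with respect to T/F-semantics'' --- a single assertion comparing derivability in the Hilbert calculus with the relation $\vdash_\BS$ of Definition~\ref{sem} --- is provably equivalent, in $\PZFC$, to its relativisation $\mathsf{SC}^\IHCL$.

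Finally, $\PZFC$ proves $\IHCL\models\ZFC$: the proof of Theorem~\ref{HCL} never invokes $\ACLA$, so it goes through already in $\PZFC$. Since $\mathsf{SC}$ (the content of Lemma~\ref{completeness}) is a theorem of $\ZFC$, we obtain $\PZFC\vdash(\IHCL\models\mathsf{SC})$, i.e.\ $\PZFC\vdash\mathsf{SC}^\IHCL$, and the absoluteness of the previous paragraph then yields $\PZFC\vdash\mathsf{SC}$, which is the lemma. The step requiring the most care --- and the only place the $\PZFC$-reformulation of ``T/F-model'' genuinely matters --- is the matching of the two model classes in the first paragraph: one must check that demanding the interpretations be \emph{classical} is exactly what makes the $\PZFC$-notion of T/F-model and the $\IHCL$-internal notion coincide, so that the outer quantifier ``for every T/F-model'' ranges over the same objects in both readings and no non-classical ``model'' is silently created or lost. (Alternatively one could bypass $\IHCL$ and simply carry out the axiom-by-axiom soundness check together with a Lindenbaum--Henkin construction directly in $\PZFC$, which is unproblematic since the syntax is classical; but this is longer and less transparent than the transfer argument.)
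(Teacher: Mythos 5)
Your transfer-to-$\IHCL$ strategy is genuinely different from the paper's proof, which simply observes that, since every object involved (domain, interpretations, the satisfaction relations they generate, and the syntax) is classical, the original Henkin-style argument for Lemma \ref{completeness} can be repeated verbatim inside $\PZFC$. Your route could in principle work, but as written it has a real gap at exactly the point you flag as "the step requiring the most care": you identify $\PZFC$-T/F-models with the T/F-models that $\IHCL$ sees, on the grounds that the domain and interpretations are classical. Classical is not the same as \emph{hereditarily} classical. The $\PZFC$-definition only demands that $M$, $(R^\M)^\pm$, $=^\pm$ be classical sets; the \emph{elements} of $M$ may be arbitrary (non-classical) sets, and such a model is then not an element of $\IHCL$ at all. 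This is not a fringe case: the models $\M^\pm$ produced in Lemma \ref{one} inherit the domain of a Tarski model, whose elements need not be classical (indeed in Lemma \ref{two} the domain consists of the classes $[a]$, which are typically non-classical), and Theorem \ref{tarski} needs Lemma \ref{bs44} for precisely such models. Consequently your claimed equivalence of the statement with its relativisation to $\IHCL$ fails in the soundness direction: relativising shrinks the class of models quantified over, so $\mathsf{SC}^\IHCL$ only yields "every derivable sequent is valid in all T/F-models lying in $\IHCL$", not validity in all $\PZFC$-T/F-models. (Completeness does transfer for free, since validity over all models implies validity over the $\IHCL$-models.)

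The gap is repairable, but it requires an argument you do not supply: either prove soundness directly in $\PZFC$ by induction on derivations (unproblematic, and essentially the paper's point that the classical proof "goes through"), or show that every $\PZFC$-T/F-model is isomorphic to one in $\IHCL$ --- e.g.\ using Choice/Replacement to collapse the classical domain $M$ onto an ordinal, noting that the condition $a=^+b\leqqq\,!(a=b)$ is preserved and that $\models^T,\models^F$ are invariant under such isomorphisms (itself a small induction). A second, more minor point: your absoluteness claim for sets of premises $\Sigma$ presupposes that $\Sigma$ is (hereditarily) classical; the coding assumption in Definition \ref{weirdsemantics} concerns individual formulas, not arbitrary sets of them, so this should be made explicit. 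With those additions the detour through $\IHCL$ is a legitimate alternative to the paper's direct re-run of the completeness proof, though it is arguably no shorter once the collapsing lemma is written out.
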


\begin{proof} Since all relevant sets and relations are classical,   the original proof of Lemma \ref{completeness} can be repeated inside $\PZFC$, yielding the desired result. 
\end{proof}


\begin{Lem}[$\PZFC$] \label{one} For every Tarski model $\M$, there exists a T/F-model $\M^{\pm}$ such that for every $\varphi$:
$$\M \models \varphi \;\; \leqqq \;\;\M^\pm \models^T \varphi$$
$$\M \not\models \varphi \;\; \leqqq \;\;\M^\pm \models^F \varphi$$\end{Lem}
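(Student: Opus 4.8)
\textbf{Proof plan for Lemma \ref{one}.}

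The plan is to define $\M^\pm$ directly from $\M$, keeping the same domain and using the satisfaction behaviour of $\M$ on \emph{atomic} formulas to produce the positive and negative interpretations of the relation symbols. Concretely, given a Tarski model $\M$ with classical domain $M$, I would let $M^\pm := M$, keep all constant interpretations, and for each $n$-ary relation symbol $R$ set
$$(R^{\M^\pm})^+ := \{\bar a \in M^n : \M \models R(\bar x)[\bar a]\}, \qquad (R^{\M^\pm})^- := \{\bar a \in M^n : \M \not\models R(\bar x)[\bar a]\},$$
where $\M \not\models \theta$ abbreviates $\no(\M \models \theta)$ as in Definition \ref{weirdsemantics}. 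Since $M$ is classical, $M^n$ is classical, and by Lemma \ref{extension} the $!$-extension and $?$-extension of the (possibly non-classical) relation $\M \models R(\bar x)[\cdot]$ are genuine \emph{classical} subsets of $M^n$, so $(R^{\M^\pm})^+$ and $(R^{\M^\pm})^-$ are classical subsets as required by the $\PZFC$-version of the T/F-model definition. For equality I would take $=^+$ to be $\{(a,b) : {!}(a=b)\}$ and $=^-$ to be $\{(a,b) : \no(a=b)\text{ is true}\}$, i.e. $\{(a,b): {!}({\sim}(a=b))\}$; the latter is classical and symmetric (symmetry of $=^-$ follows from symmetry of $\neq$), and one checks $a =^+ b \leqqq {!}(a=b)$ holds by construction, so $\M^\pm$ is a legitimate T/F-model.

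Having defined $\M^\pm$, the verification of the two displayed bi-implications proceeds by simultaneous induction on the complexity of $\varphi$, proving both $(\M \models \varphi) \leqqq (\M^\pm \models^T \varphi)$ and $(\M \not\models \varphi) \leqqq (\M^\pm \models^F \varphi)$ at once. The atomic cases for $R(\bar t)$ and $t = s$ hold essentially by the definition of $\M^\pm$ (for equality using $a =^+ b \leqqq {!}(a=b) \leqqq (\M \models t=s)$ and $a =^- b \leqqq (\M \not\models t=s)$). For each connective and quantifier I would line up the Tarski clause from Definition \ref{weirdsemantics} with the corresponding $\models^T$ and $\models^F$ clauses from Definition \ref{mainsemantics}: e.g. for $\varphi \to \psi$, the Tarski clause gives $\M \models \varphi \to \psi \leqqq (\M\models\varphi \to \M\models\psi)$, and using the induction hypothesis this is $\leqqq (\M^\pm\models^T\varphi \to \M^\pm\models^T\psi)$, which is exactly the $\models^T$-clause for $\to$; dually $\M \not\models(\varphi\to\psi)$ unwinds via ${\sim}(\varphi\to\psi) \leqqq (\varphi \land {\sim}\psi)$ (axiom 18) to $\M\models\varphi \land \M\not\models\psi$, matching the $\models^F$-clause. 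The negation case uses clause 3 of Definition \ref{weirdsemantics} together with clause 3 of Definition \ref{mainsemantics}, which swap $T$ and $F$ in parallel; the $\land,\lor$ cases use the De Morgan behaviour of $\no$ (axioms 16, 17); and the quantifier cases use $\no\forall x\varphi \leqqq \exists x\no\varphi$ and its dual (axioms 20, 21), noting that quantification over $M$ is the same on both sides since $\M^\pm$ has domain $M$. The $\bot$ case is immediate from clause 10 on each side.

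The main obstacle I anticipate is not any single inductive step but the care needed to keep all the bi-implications \emph{native} (not strong) and to track, at each step, that one is reasoning with the $\leqqq$-connective inside $\PZFC$ rather than in a classical meta-theory — in particular one cannot freely contrapose or case-split on truth values, so the argument must be pushed through using only the $\BS$-provable equivalences listed in Section \ref{defined} and the negation axioms, applied uniformly to the $T$-clause and the $F$-clause. A secondary point requiring attention is checking that $\M^\pm$ genuinely satisfies the classicality requirements of the $\PZFC$-version of a T/F-model: this is where Lemma \ref{extension} (the $!$- and $?$-extensions of a set are sets) and the assumption that $M$ is classical are essential, and it should be stated explicitly rather than glossed over. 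Once this bookkeeping is in place, the lemma follows, and it sets up the reverse simulation (Tarski models from T/F-models) and ultimately the transfer of soundness and completeness from Lemma \ref{bs44} to Theorem \ref{tarski}.
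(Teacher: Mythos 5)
Your proposal is correct and follows essentially the same route as the paper: keep the domain of $\M$, read the positive interpretation of each relation symbol (including equality) off the \emph{truth} of the atomic satisfaction in $\M$ and the negative interpretation off its \emph{falsity} (the paper writes these as $\{\bar a : {!}R(\bar a)\}$ and $\{\bar a : \lnot ? R(\bar a)\}$), and then run a simultaneous induction on the complexity of $\varphi$ matching the Tarski clauses of Definition \ref{weirdsemantics} with the $\models^T$/$\models^F$ clauses of Definition \ref{mainsemantics}. The only cosmetic point is that your displayed set-builder definitions should carry the explicit ${!}$ (respectively $\lnot ?$ or ${!}\no$) so that they literally denote classical subsets of $M^n$, which is exactly what your appeal to classicality of $M$ and to the $!$-/$?$-extensions is implicitly doing.
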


\begin{proof} Let $\M^{\pm}$ have the same domain as $\M$.  For every relation symbol $R$ define classical relations $(R^{\M^\pm})^+$ and $(R^{\M^\pm})^-$ (including equality) 
$$(R^{\M^\pm})^+ := \{ (a_1, \dots a_n) \in M^n \; : \;  ! R(a_1, \dots, a_n)\}$$ 
$$(R^{\M^\pm})^- := \{ (a_1, \dots a_n) \in M^n \; : \;  \lnot ? R(a_1, \dots, a_n)\}$$ 
This definition makes sure that we have
$$\M \models R[a_1 \dots a_n] \;\; \leqqq \;\; \M^\pm \models^T R[a_1 \dots a_n]$$
$$\M \not\models R[a_1 \dots a_n] \;\; \leqqq \;\; \M^\pm \models^F R[a_1 \dots a_n]$$
 An induction on the complexity of $\varphi$ then shows that the two equivalences hold for all sentences.  To exhibit an example:
$$\M \models \no \varphi \; \Leftrightarrow \; \M \not\models \varphi \; \stackrel{\rm IH}{\leqqq} \; \M^\pm \models^F \varphi \; \leqqq  \; \M^\pm \models^T \no \varphi$$
$$\M \not\models \no \varphi \; \Leftrightarrow \; \M \models \varphi \; \stackrel{\rm IH}{\leqqq} \; \M^\pm \models^T \varphi \; \leqqq \;  \M^\pm \models^F \no \varphi.$$
 We leave the details to the reader. \end{proof}

\begin{Lem}[$\BZFC$]  \label{two} For every T/F-model $\N$, there exists a Tarski model $\N^{4}$ such that for every $\varphi$:
$$\N \models^T \varphi \;\; \leqqq \;\; \N^4 \models  \varphi$$
$$\N \models^F \varphi \;\; \leqqq \;\; \N^4 \not\models  \varphi$$
\end{Lem}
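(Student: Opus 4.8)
The plan is to mimic the construction in Lemma \ref{one}, but in reverse: given a T/F-model $\N$ with its two classical interpretations $(R^\N)^+$ and $(R^\N)^-$ for each relation symbol, I want to define a single Tarski model $\N^4$ on the same domain whose (possibly non-classical) interpretation $R^{\N^4}$ encodes \emph{both} of these classical relations simultaneously. The key point is that in $\BZFC$ the anti-classicality axiom gives us, for any two classical sets, a set with prescribed $!$-extension and $?$-extension (Theorem \ref{aclaacla}); this is precisely what lets us build a genuinely four-valued interpretation out of a positive and a negative classical interpretation. Concretely, for each $n$-ary relation symbol $R$ and each tuple $\bar a \in M^n$, I would let the truth value of $R^{\N^4}(\bar a)$ be $1$ if $\bar a \in (R^\N)^+ \setminus ((R^\N)^-)$, be $0$ if $\bar a \in ((R^\N)^-) \setminus (R^\N)^+$... wait, more carefully: be true exactly when $\bar a \in (R^\N)^+$ and be false exactly when $\bar a \in (R^\N)^-$. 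Using $\ACLA$ (really the version in Theorem \ref{aclaacla}), such an interpretation exists, since $(R^\N)^+$ and $(R^\N)^-$ are classical sets by the definition of a T/F-model in $\PZFC$.

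First I would handle equality separately, since a Tarski model interprets $=$ by true equality on its domain, whereas a T/F-model carries an auxiliary classical symmetric relation $=^-$. The trick is that the Tarski satisfaction clause for $\no$ refers to $\no(\M \models \varphi)$, and the clause for $=$ gives $\N^4 \models (t=s)[a,b] \leqqq a=b$, so $\N^4 \not\models (t = s)[a,b]$ is equivalent to $a \neq b$ (meta-theoretic inequality), not to $a =^- b$. To repair this I would, as in Lemma \ref{one}'s counterpart, arrange that the base case for equality already matches: I need $\N \models^T (t=s)[a,b] \leqqq \N^4 \models (t=s)[a,b]$, i.e. $a =^+ b \leqqq a = b$, which holds because in the $\PZFC$ version of a T/F-model (Definition right before Lemma \ref{bs44}) we require $a =^+ b \leqqq \;!(a=b)$, and similarly I would need $\N \models^F (t=s)[a,b] \leqqq \N^4 \not\models (t = s)[a,b]$, i.e. $a =^- b \leqqq a \neq b$. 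This last equivalence is \emph{not} guaranteed by the T/F-model axioms, so the honest fix is to expand the vocabulary: introduce a fresh binary relation symbol $\dot{E}$ whose $\N^4$-interpretation has positive part $=^+$ and negative part $=^-$ (again via $\ACLA$), and interpret the object-language $=$ through $\dot E$; alternatively, restrict attention to T/F-models in which $=^-$ agrees with meta-theoretic $\neq$, which is harmless for the soundness/completeness application. I would follow whichever of these the authors set up, most likely the latter given the phrasing.

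With the base cases matched, the core of the proof is a routine induction on the complexity of $\varphi$ establishing the two equivalences $\N \models^T \varphi \leqqq \N^4 \models \varphi$ and $\N \models^F \varphi \leqqq \N^4 \not\models \varphi$ simultaneously. Each inductive step is a direct comparison of the T/F-semantics clauses in Definition \ref{mainsemantics} with the Tarski clauses in Definition \ref{weirdsemantics}, carried out inside $\PZFC$ so that all the bi-implications are honest (strong where needed). For instance, for negation: $\N \models^T \no\varphi \leqqq \N \models^F \varphi \stackrel{\rm IH}{\leqqq} \N^4 \not\models \varphi \leqqq \N^4 \models \no\varphi$, and dually $\N \models^F \no\varphi \leqqq \N \models^T \varphi \stackrel{\rm IH}{\leqqq} \N^4 \models \varphi \leqqq \N^4 \not\models \no\varphi$; the cases for $\land$, $\lor$, $\to$, $\leqqq$, $\exists$, $\forall$, and $\bot$ are analogous, using in each case exactly the clause that the Tarski ``$F$-side'' is definitionally $\no$ of the ``$T$-side''. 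The main obstacle, as indicated above, is the equality clause — everywhere else the Tarski model ``automatically'' produces an $F$-condition that is the classical negation of the $T$-condition, which is precisely what the T/F $F$-clauses demand, but equality is the one symbol where a T/F-model carries independent data ($=^-$) that a Tarski model cannot reproduce without a vocabulary trick or an extra hypothesis. Once that is dealt with, the induction goes through mechanically, and combined with Lemma \ref{one}, Lemma \ref{bs44} (soundness/completeness for T/F-semantics inside $\PZFC$), this yields Theorem \ref{tarski}.
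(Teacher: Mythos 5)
You have the right idea for the relation symbols---exactly as in the paper, you invoke the anti-classicality axiom (Theorem \ref{aclaacla}) to merge the two classical interpretations $(R^\N)^+$ and $(R^\N)^-$ into a single non-classical interpretation---and you correctly isolate equality as the crux. But at that crucial point you stop short: neither of your proposed fixes proves the lemma as stated, and the paper's actual resolution is a construction you do not supply. The paper uses $\ACLA$ once more to define a \emph{non-classical equivalence relation} $\equiv$ on the domain, with $a \equiv b \leqqq a = b$ and $a \not\equiv b \leqqq a =^- b$, and takes the domain of $\N^4$ to be the classes $[a] = \{b \in N : a \equiv b\}$. These classes are non-classical sets, and by Extensionality (cf.\ Lemma \ref{shwa}) genuine meta-theoretic equality and inequality of them reproduce both coordinates at once: $[a]=[b] \leqqq a=b$ and $[a]\neq[b] \leqqq a =^- b$. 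Hence the hard-coded Tarski clause for $=$ in Definition \ref{weirdsemantics} automatically matches both $=^+$ and $=^-$, with no change of vocabulary and no restriction on $\N$; the relation symbols are then interpreted on equivalence classes via $\ACLA$ as you describe (one must also check well-definedness), and the induction proceeds as you sketch.

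Your two fallbacks each leave a genuine gap. Expanding the vocabulary with a fresh symbol $\dot{E}$ does not prove the statement: Definition \ref{weirdsemantics} fixes the interpretation of $=$ as actual equality, so you would only obtain the equivalences for translated formulas, and you would still owe an argument that the equality axioms of the calculus (axioms 13, 14 and 22) transfer to $\dot{E}$ when you feed this into Theorem \ref{tarski}. Restricting to T/F-models in which $=^-$ agrees with meta-theoretic $\neq$ is not ``harmless for the soundness/completeness application'': the completeness of $\BS$ for T/F-semantics (Lemma \ref{bs44}) gives no such guarantee about the counter-models it produces. Concretely, $\{\no(c=c)\}$ is $\BS$-non-trivial and every T/F-model of it must have $c^\N =^- c^\N$, yet on a domain of classical elements the meta-theoretic statement $c \neq c$ is simply false, so such a model falls outside your restricted class; to turn it into a Tarski model one must populate the domain with non-classical sets whose self-inequality is true, which is precisely what the quotient-by-$\equiv$ construction you are missing provides.
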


\begin{proof}  First we   take care of equality, since the negative relation $=^-$ of $\N$ is not a priori related to the meta-theoretic $\neq$ for sets.  We define an equivalence relation $\equiv$ on $N$ as follows (appealing to the  Anti-Classicality Axiom):

$$a \equiv b \; \leqqq \; a = b$$
$$a \not\equiv b \; \leqqq \; a =^- b$$
 It is not hard to verify that $\equiv$ satisfies Definition \ref{eqrel}, so if we consider equivalence classes $[a] := \{b \in N : a \equiv b\}$ then by Lemma \ref{eqrel2} we know that $$[a] = [b] \leqqq a = b$$ $$[a] \neq [b] \leqqq  a =^- b.$$

\p Let the domain of $\N^4$ be $  \{ [a] : a \in N\}$ and notice that we have
$$\N \models^T (t=s)[a,b] \; \leqqq \; a=b \; \leqqq \; [a] = [b] \; \leqqq \; \N^4 \models (t=s)[ [a] ,[b]]$$
$$\N \models^F (t=s)[a,b] \; \leqqq \; a=^- b \; \leqqq \; [a] \neq [b] \; \leqqq \; \N^4 \not\models (t=s)[[a] ,[b]]$$

 \p For every relation symbol $R$,   define the interpretation $R^{\N^4}$ by:

$$\;\; R^{\N^4}( [a_1]\dots [a_n]) \; \leqqq \; (R^\N)^+(a_1 \dots a_n)$$
$$\no R^{\N^4}( [a_1]\dots [a_n]) \; \leqqq \; (R^\N)^-(a_1 \dots a_n)$$
which is again possible by appealing to the Anti-Classicality Axiom.  Also note that this is well-defined by the usual arguments.   

  \p Finally, we leave it to the reader to verify,  by induction on the complexity of $\varphi$, that for all $a_1, \dots, a_n \in N$ we have

$$\N \models^T \varphi[a_1 \dots a_n] \;\; \leqqq \;\; \N^4 \models \varphi[[a_1] \dots [a_n]]$$
$$\N \models^F \varphi[a_1 \dots a_n] \;\; \leqqq \;\; \N^4 \not\models \varphi[ [a_1] \dots [a_n]]$$

\p In particular this is true for all sentences $\varphi$, completing the proof.\end{proof}


\begin{proof}[Proof of Theorem \ref{tarski} $(\BZFC)$] By Lemma  \ref{bs44}, $\BS$ is sound and complete with respect to T/F-semantics. But by Lemmas  \ref{one} and \ref{two}, T/F-models can be replaced by Tarski-models and vice versa. Thus T/F-semantics are equivalent to Tarski-semantics, which completes the proof.
\end{proof}


Notice that while most of the theory in this section only required $\PZFC$, Lemma  \ref{two}  hinges on the Anti-Classicality Axiom.  Indeed,  we can now analyse the situation in more detail.

Recall from  Remark \ref{moregeneral}    that if one is interested in a set theory which has only inconsistent but not incomplete sets, or only incomplete but not inconsistent sets, one can consider these two fragments of the Anti-Classicality Axiom:

\begin{itemize} 
\item  $\boldsymbol{\ACLA_{\rm cons}:} \boldsymbol{  \;\; \forall x (x^! \subseteq x^?) \land \exists x (x^? \not\subseteq x^!)}$

(all sets are consistent, but there is an incomplete set).
\item $\boldsymbol{\ACLA_{\rm comp} :  \;\; \forall x (x^? \subseteq x^!) \land \exists x (x^! \not\subseteq x^?)}$  

(all sets are complete, but there is an inconsistent set).

\end{itemize}
Now we consider some fragments of  $\BS$, referring to them by the name under which they (or a very similar version) have previously appeared in the literature. 


\begin{Def}[PZFC] $\;$

 \begin{enumerate}
 
\item  K3$^\to$: this  logic is obtained if in Definition \ref{sem1} we require   $(R^\M)^+ \cap (R^\M)^- = \varnothing$  for all relation symbols $R$ (including equality).  The propositional version  appeared  in  \cite{K3arrow}.
  \item  LFI1: this  logic is obtained if in Definition \ref{sem1} we require  $(R^\M)^+ \cup (R^\M)^- = M$ (the whole domain) for all relation symbols $R$ (including equality).  The propositional version  appeared in \cite{LFI1} .
  
  \item FDE: the ``$\to$''- and ``$\bot$''-free fragment of $\BS$,   which appeared  in \cite{Asenjo-LP}.
  \item K3: the  ``$\to$''- and ``$\bot$''-free fragment of K3$^\to$.  The propositional version  is the widely known three-valued  logic with indeterminate truth values of Kleene \cite{KleeneK3}.
  \item LP: the  ``$\to$''- and ``$\bot$''-free fragment of LFI1. The propositional version is the well-known {\it logic of paradox} of Priest \cite{Priest-LP}.


\end{enumerate} 
\end{Def}

K3 and K3$^\to$ are three-valued logics designed to deal only with incompleteness but not inconsistency,  while LP and LFI1 are also three-valued and deal inconsistency but not incompleteness.  FDE and $\BS$ are four-valued and take incompleteness as well as inconsistency into account.  FOL refers to classical logic.  







\begin{Thm}[PZFC] \label{tt1} $\;$

\begin{enumerate}
\item  $  \ACLA  \; \leqqq \; (  \Sigma \models \varphi  \;\; \leqqq \;\;   \Sigma \vdash_{\: \BS} \varphi).  $
\item $\ACLA_{\rm cons}  \; \leqqq \; (  \Sigma \models \varphi  \;\; \leqqq \;\;   \Sigma \vdash_{ \:{\rm K3}^\to} \varphi).  $
\item $\ACLA_{\rm comp}  \; \leqqq \; (  \Sigma \models \varphi  \;\; \leqqq \;\;   \Sigma \vdash_{\: \rm LFI1} \varphi).  $
\item $\forall x (x^! = x^?)  \; \leqqq \; (  \Sigma \models \varphi  \;\; \leqqq \;\;   \Sigma \vdash_{ \: \rm FOL}  \varphi).  $
\item Exactly one of the above   holds.
\end{enumerate}
\end{Thm}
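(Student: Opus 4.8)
The plan is to follow the template of Theorem~\ref{tarski}: show that each of the four logics is sound and complete with respect to a suitably \emph{restricted} class of T/F-models, and that, under the matching anti-classicality fragment, these restricted T/F-models are mutually simulable with Tarski models in the manner of Lemmas~\ref{one} and~\ref{two}. I would dispatch item~(5) first. The sentences $\exists x(x^!\not\subseteq x^?)$ (``some inconsistent set exists'') and $\exists x(x^?\not\subseteq x^!)$ (``some incomplete set exists'') are classical: their matrices compare classical sets and are therefore classical, and an existential quantifier over a classical matrix, taken over the non-empty universe, is again classical. Hence each has a definite classical truth value, the four principles on the left-hand sides of~(1)--(4) are precisely the four Boolean combinations of these two, and so exactly one of them holds.

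Next I would prove the four forward implications. For $\ACLA$ this is Theorem~\ref{tarski}. For the others, first record the analogue of Lemma~\ref{bs44}: provably in $\PZFC$, ${\rm K3}^\to$, ${\rm LFI1}$ and classical ${\rm FOL}$ are sound and complete with respect to the classes of T/F-models whose relation interpretations (equality included) are, respectively, all \emph{consistent}, all \emph{complete}, all \emph{classical}; this is \cite{K3arrow} and \cite{LFI1} together with classical completeness, adapted to $\PZFC$ as Lemma~\ref{bs44} adapts Lemma~\ref{completeness}. Then I would run the two simulations and check that each respects the restriction. \textbf{From Tarski to T/F:} the construction $\M\mapsto\M^{\pm}$ of Lemma~\ref{one} goes through verbatim in $\PZFC$, and if $\ACLA_{\rm cons}$ holds then every set $R^\M$ is consistent, so $(R^{\M^{\pm}})^{+}=(R^\M)^{!}$ and $(R^{\M^{\pm}})^{-}=M^n\setminus(R^\M)^{?}$ are disjoint and $\M^{\pm}$ is a ${\rm K3}^\to$-model; dually $\ACLA_{\rm comp}$ makes $\M^{\pm}$ an ${\rm LFI1}$-model and $\forall x(x^!=x^?)$ makes it a classical model. \textbf{From T/F to Tarski:} the construction $\N\mapsto\N^4$ of Lemma~\ref{two} only ever needs to realize a relation interpretation (and the non-classical equality $\equiv$) whose positive and $?$-extensions are the classical sets $(R^\N)^{+}$ and $M^n\setminus(R^\N)^{-}$; for a consistent T/F-model the former is contained in the latter, so the refinement of Theorem~\ref{aclaacla} recorded in Remark~\ref{moregeneral} lets $\ACLA_{\rm cons}$ supply such a set; dually ${\rm LFI1}$ needs the $\ACLA_{\rm comp}$-refinement, and the classical case needs no non-classical set at all. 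Composing these with restricted completeness yields $P_j \to (\Sigma\models\varphi \leqqq \Sigma\vdash_{L_j}\varphi)$ for each $j$, where $(P_j,L_j)$ runs through $(\ACLA,\BS)$, $(\ACLA_{\rm cons},{\rm K3}^\to)$, $(\ACLA_{\rm comp},{\rm LFI1})$ and $(\forall x(x^!=x^?),{\rm FOL})$.

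To finish, I would note that $\vdash_{\BS}$, $\vdash_{{\rm K3}^\to}$, $\vdash_{{\rm LFI1}}$ and $\vdash_{{\rm FOL}}$ are pairwise distinct as provability relations, provably in $\PZFC$: $\varphi\lor{\no}\varphi$ (native excluded middle) is a theorem of ${\rm LFI1}$ and ${\rm FOL}$ but not of $\BS$ or ${\rm K3}^\to$, and $(\varphi\wedge{\no}\varphi)\to\bot$ (native \emph{ex falso}) is a theorem of ${\rm K3}^\to$ and ${\rm FOL}$ but not of $\BS$ or ${\rm LFI1}$, each relevant non-derivability being witnessed by a T/F-model with \emph{classical} relation interpretations and hence available in $\PZFC$. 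Now by item~(5) exactly one $P_j$ holds; by the forward implications Tarski consequence then coincides with $\vdash_{L_j}$, so by distinctness it differs from $\vdash_{L_k}$ for $k\neq j$. This gives the converse implication of each of~(1)--(4): for $k\neq j$ the hypothesis $(\Sigma\models\varphi\leqqq\Sigma\vdash_{L_k}\varphi)$ would force $\vdash_{L_k}=\vdash_{L_j}$, a contradiction, so it is vacuous, and for $k=j$ it holds with $P_j$ true. Hence all four biconditionals are provable and exactly one of the four right-hand characterizations of $\models$ applies.

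The main obstacle is the bookkeeping in the simulation step: one must check that the restricted T/F-model class is \emph{exactly} matched by the weakened anti-classicality fragment --- that a Tarski model can never produce a relation interpretation (nor an induced equality) more non-classical than the meta-theory allows, and, conversely, that $\N\mapsto\N^4$ never requires a set beyond what $\ACLA_{\rm cons}$ or $\ACLA_{\rm comp}$ guarantees; here the direction of the inclusions ``$u\subseteq v$'' versus ``$v\subseteq u$'' in Remark~\ref{moregeneral} must be tracked carefully. Adapting the ${\rm K3}^\to$- and ${\rm LFI1}$-completeness theorems to $\PZFC$ is routine given Lemma~\ref{bs44}, but should be stated explicitly rather than left implicit.
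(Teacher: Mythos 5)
Your proposal is correct and follows essentially the route the paper intends: it leaves the proof to the reader as "methods similar to the above," i.e., restricted-class T/F-completeness for K3$^\to$, LFI1 and FOL inside $\PZFC$, the two simulation lemmas adapted using the $\ACLA_{\rm cons}$/$\ACLA_{\rm comp}$ refinements of Theorem \ref{aclaacla} from Remark \ref{moregeneral}, and the converses obtained from the classicality and mutual exclusivity of the four left-hand conditions together with separating formulas such as $\varphi \lor \no\varphi$ and $(\varphi \land \no\varphi)\to\bot$. Your bookkeeping caveats (matching the restricted model classes to the weakened axiom fragments, and the equality relation) are exactly the details the paper omits, and your treatment of them is sound.
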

 If we restrict attention to formulas not containing implications or $\bot$,  we obtain the same result for  more familiar systems.

\begin{Thm}[PZFC] \label{tt2} $\;$ Suppose $\Sigma$ and $\varphi$ do not contain ``$\to$'' or ``$\bot$''. Then:

\begin{enumerate}
\item  $  \ACLA  \; \leqqq \; (  \Sigma \models \varphi  \;\; \leqqq \;\;   \Sigma \vdash_{\: \rm FDE} \varphi).  $
\item $\ACLA_{\rm cons}  \; \leqqq \; (  \Sigma \models \varphi  \;\; \leqqq \;\;   \Sigma \vdash_{ \:{\rm K3}} \varphi).  $
\item $\ACLA_{\rm comp}  \; \leqqq \; (  \Sigma \models \varphi  \;\; \leqqq \;\;   \Sigma \vdash_{\: \rm LP} \varphi).  $
\item $\forall x (x^! = x^?)  \; \leqqq \; (  \Sigma \models \varphi  \;\; \leqqq \;\;   \Sigma \vdash_{ \: \rm FOL}  \varphi).  $
\item Exactly one of the above  holds.
\end{enumerate}
\end{Thm}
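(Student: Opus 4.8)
The plan is to obtain this theorem from the preceding one (the version with $\BS$, ${\rm K3}^\to$, ${\rm LFI1}$ and ${\rm FOL}$). The four hypotheses appearing in items~1--4 --- $\ACLA$, $\ACLA_{\rm cons}$, $\ACLA_{\rm comp}$ and $\forall x(x^!=x^?)$ --- are exactly those of the preceding theorem, so item~5 is inherited verbatim, and item~4 is unchanged. Hence everything reduces to replacing, on the ``$\to$''-free, ``$\bot$''-free fragment, $\vdash_{\BS}$ by $\vdash_{\rm FDE}$, $\vdash_{{\rm K3}^\to}$ by $\vdash_{\rm K3}$, and $\vdash_{\rm LFI1}$ by $\vdash_{\rm LP}$ inside items~1--3. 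If one reads ${\rm FDE}$, ${\rm K3}$, ${\rm LP}$ literally as the ``$\to$''-free, ``$\bot$''-free fragments of $\BS$, ${\rm K3}^\to$, ${\rm LFI1}$, this replacement is a tautology and the theorem is immediate; the substance lies in identifying these fragments with the standard first-order systems carrying those names, which is the conservativity lemma below.

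So the first step is to record the conservativity lemma: for $\Sigma\cup\{\varphi\}$ containing neither ``$\to$'' nor ``$\bot$'',
$$\Sigma\vdash_{\BS}\varphi\;\leftrightarrow\;\Sigma\vdash_{\rm FDE}\varphi,\qquad \Sigma\vdash_{{\rm K3}^\to}\varphi\;\leftrightarrow\;\Sigma\vdash_{\rm K3}\varphi,\qquad \Sigma\vdash_{\rm LFI1}\varphi\;\leftrightarrow\;\Sigma\vdash_{\rm LP}\varphi.$$
Every relation here is \emph{classical} --- each just asserts the existence of a finite derivation in a fixed, classically coded calculus --- so this lemma is a classical statement, provable in $\PZFC$ unconditionally. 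I would prove it semantically: by Lemma~\ref{bs44}, $\vdash_{\BS}$ coincides with T/F-semantic consequence, and the analogous soundness and completeness results hold in $\PZFC$ for ${\rm K3}^\to$ and ${\rm LFI1}$ with respect to the restricted classes of T/F-models introduced above (those with $(R^\M)^+\cap(R^\M)^-=\varnothing$, resp. $(R^\M)^+\cup(R^\M)^-=M$); likewise ${\rm FDE}$, ${\rm K3}$, ${\rm LP}$ are sound and complete with respect to the standard first-order Belnap/Kleene/Priest valuations. On the ``$\to$''-free, ``$\bot$''-free fragment the clauses of Definition~\ref{mainsemantics} that remain --- those for $\land$, $\lor$, $\no$, the quantifiers and equality --- are exactly the valuation clauses of ${\rm FDE}$ (resp. ${\rm K3}$, ${\rm LP}$), so the restriction of T/F-semantic consequence to the fragment is the corresponding standard consequence relation; the three biconditionals follow.

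Granting the lemma I would finish by substitution. Both sides of each biconditional are classical formulas, and for classical $\alpha,\beta$ the material equivalence $\alpha\leqqq\beta$ implies the strong equivalence $\alpha\Leftrightarrow\beta$ (for classical formulas native and classical negation coincide, so $\alpha\leqqq\beta$ forces $\no\alpha\leqqq\no\beta$ as well). Hence in item~1 of the preceding theorem, $\ACLA\;\leqqq\;(\Sigma\models\varphi\;\leqqq\;\Sigma\vdash_{\BS}\varphi)$, the subformula $\Sigma\vdash_{\BS}\varphi$ may be replaced by the strongly equivalent $\Sigma\vdash_{\rm FDE}\varphi$, producing item~1 here; items~2 and~3 follow in the same way using the pairs $({\rm K3}^\to,{\rm K3})$ and $({\rm LFI1},{\rm LP})$, and items~4--5 are inherited directly.

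The main obstacle is the conservativity lemma, and the reason I would route its proof through completeness rather than through manipulation of derivations is that the $\BS$ calculus has axioms such as $\varphi\lor(\varphi\to\psi)$ and $\bot\to\varphi$ that force ``$\to$'' and ``$\bot$'' into derivations even of implication-free, $\bot$-free conclusions, so those axioms cannot simply be deleted. Two smaller points need care. First, one must check that the restricted T/F-semantics for ${\rm K3}^\to$ and ${\rm LFI1}$ collapses on the fragment to the genuine first-order ${\rm K3}$ and ${\rm LP}$ evaluations --- in particular that the equality clauses, with $=^+$ forced to be real equality and $=^-$ merely symmetric, add no behaviour beyond what ${\rm K3}$/${\rm LP}$ already allow. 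Second, one should note that the quantifier clauses of Definition~\ref{mainsemantics} are untouched by removing ``$\to$'' and ``$\bot$'' and already match the first-order versions of these logics, so quantifiers require nothing extra.
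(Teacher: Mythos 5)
Your overall route is the intended one: the paper introduces FDE, K3 and LP precisely as the ``$\to$''- and ``$\bot$''-free fragments of $\BS$, ${\rm K3}^\to$ and ${\rm LFI1}$ and presents this theorem as the restriction of the preceding one, with the details (via the same Tarski/T-F simulation lemmas) left to the reader; your conservativity lemma, its semantic proof, and the observation that the provability relations are classical so that weak equivalence upgrades to strong equivalence and licenses substitution, are all fine and in the spirit of the paper.

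There is, however, a genuine gap in the reduction itself. The right-hand side of each item is the statement ``for all $\to$-free, $\bot$-free $\Sigma,\varphi$: $\Sigma \models \varphi \leqqq \Sigma \vdash \varphi$'', i.e.\ the inner quantification over sequents is restricted to the fragment. Restricting the quantifier makes this a \emph{weaker} statement than the corresponding right-hand side of the preceding theorem, so the left-to-right implications do transfer, but the right-to-left implications (e.g.\ ``Tarski consequence coincides with $\vdash_{\rm FDE}$ on the fragment $\Rightarrow \ACLA$'') are \emph{strictly stronger} than those of the preceding theorem and do not follow by substituting $\vdash_{\rm FDE}$ for $\vdash_\BS$; your claims that item 4 is ``unchanged'' and that everything reduces to relabelling the calculi overlook exactly this. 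The missing step is small but must be supplied: either argue directly (as the paper does for the unrestricted version) that failure of the hypothesis produces a fragment counterexample --- if all sets are complete then $\models \varphi \lor \no\varphi$ becomes a Tarski validity not derivable in FDE or K3, and if all sets are consistent then the explosion sequent $\varphi, \no\varphi \models \psi$ becomes Tarski-valid but fails in FDE and LP, both witnesses being $\to$- and $\bot$-free --- or, equivalently, combine the four left-to-right directions with item 5 (the four meta-theoretic alternatives are exclusive and exhaustive) and the fact that $\vdash_{\rm FDE}, \vdash_{\rm K3}, \vdash_{\rm LP}, \vdash_{\rm FOL}$ are pairwise distinct already on $\to$-free, $\bot$-free sequents, again witnessed by excluded middle and explosion. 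Without one of these observations the backward halves of items 1--4 are unproved.
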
  The proofs of  these theorems  are variations of  Theorem \ref{tarski}, and are left to the reader.

 \section{  Future Work} \label{conclusion}

 We would like to conclude by reflecting on a number of possible future research directions.  

 \begin{enumerate}
 
 \item  \textbf{Algebraic approach.} The logic $\BS$ and set theory in $\BS$ can be approached using algebraic semantics, specifically via   so-called {\it twist algebras}. This can even be used to study an analogue of Boolean-valued models (closely related to forcing over classical models of $\ZFC$) in the $\PZFC$- or $\BZFC$-context.  A significant part of this has  been done in \cite[Chapter 3 and Chapter 8]{HrafnThesis} but there are more things that can be studied.  
 
 \item \textbf{Constructive $\BZFC$.} It is also possible to consider an intuitionistic version of $\BS$, defined syntactically by changing axioms 1--14 in Section \ref{proof} to an intuitionistic version,  or semantically by considering Kripke frames with T/F-models at the nodes.  In this logic, known as N4 and appearing in \cite{NelsonN4},    $\varphi \lor \no \varphi$ and $\varphi \lor \lnot \varphi$ both need not be true, and $\varphi \land \no \varphi$ does not lead to a strong contradiction, but $\varphi \land \lnot \varphi$ does. Moreover, $\no \no \varphi$ is strongly equivalent to $\varphi$ while $\lnot \lnot \varphi$ is not. 
 
 What type of set theory do we obtain in such a logic if we combine the ideas of this paper with constructive systems like $\IZF$ or $\CZF$? 

\item \textbf{Computability theory in $\BZFC$.}  There seems to be a natural application of $\BZFC$ regarding computability theory.  Recall that  a Turing machine $M$   \emph{computes} a set $A \subseteq \IN$ if for all $n$: \begin{itemize}
\item $n \in A  \; \leqqq \; M$ halts on input $n$ and outputs some non-$0$ value.
\item $n \notin A  \; \leqqq \; M$ halts on input $n$ and outputs $0$.
\end{itemize}
Moreover, let us say that $M$ \emph{recognizes} a set $A \subseteq \IN$ if for all $n$:
\begin{itemize}
\item $n \in A  \; \leqqq \; M$ halts on input $n$  and outputs a non-$0$ value.  
\end{itemize}
Classically, every Turing machine recognizes a set, but not every Turing machine computes a set (since it may not halt on a given input).  Thus, one cannot use sets $A \subseteq \IN$ to represent decisions of a machine.

    
    
  \p  
Let us revisit the situation in BZFC (or even $\text{PZFC}+\text{ACLA}_\text{cons}$). Using the same definition as above,  it follows from Theorem \ref{aclaacla} that every Turing machine computes a set.  If a given machine does not halt on input $n$, it means only that it  computes an incomplete set $A$, namely a set for which $\lnot (n \in A \lor n \notin A)$. In particular, the decision process of a machine can be completely described by subsets of $\IN$. 


 \item \textbf{Cardinal arithmetic in $\BZFC$.}  It is clear that  we  need a new notion of \emph{cardinality} and \emph{cardinal numbers} to describe the size of  non-classical sets.  Such a notion should  capture the size of the entire structure of a non-classical sets, i.e.,   the size of the $!$-extension, $?$-extension, and all of the combinations,  in one go.  Some preliminary work on this has already been done, and we expect that this will  lead to a rich theory of cardinals and cardinal arithmetic.

\end{enumerate}

 \vbox{
 \s{Acknowledgments}
 
\p  We would like to thank  Benno van den Berg, Luca Incurvati, Benedikt L\"owe, Hitoshi Omori, Andrew Tedder, Giorgio Venturi and Heinrich Wansing for insightful conversations, helpful suggestions and ideas.  We also want to thank the anonymous referee whose thorough reading of an earlier version of this paper has  contributed significantly to an improved presentation.
}

\bibliographystyle{plain}
\bibliography{../../10_Bibliography/Khomskii_Master_Bibliography}{}

\begin{thebibliography}{10}

\bibitem{AFA}
Peter Aczel.
\newblock {\em Non-well-founded sets}, volume~14 of {\em CSLI Lecture Notes}.
\newblock Stanford University, Center for the Study of Language and
  Information, Stanford, CA, 1988.
\newblock With a foreword by Jon Barwise [K. Jon Barwise].

\bibitem{NelsonN4}
Ahmad Almukdad and David Nelson.
\newblock Constructible falsity and inexact predicates.
\newblock {\em The Journal of Symbolic Logic}, 49(1):231–233, 1984.

\bibitem{Asenjo-LP}
F.~G. Asenjo.
\newblock A calculus of antinomies.
\newblock {\em Notre Dame J. Formal Logic}, 7:103--105, 1966.

\bibitem{Avron}
Arnon Avron.
\newblock Natural {$3$}-valued logics---characterization and proof theory.
\newblock {\em J. Symbolic Logic}, 56(1):276--294, 1991.

\bibitem{Andrew-Bacon}
Andrew Bacon.
\newblock Non-classical metatheory for non-classical logics.
\newblock {\em J. Philos. Logic}, 42(2):335--355, 2013.

\bibitem{ExtraBS4}
Diderik Batens, Kristof De~Clercq, and Natasha Kurtonina.
\newblock Embedding and interpolation for some paralogics. {T}he propositional
  case.
\newblock {\em Rep. Math. Logic}, (33):29--44, 1999.

\bibitem{Belnap1}
Nuel~D. Belnap.
\newblock How a computer should think.
\newblock In {\em New essays on {B}elnap-{D}unn logic}, volume 418 of {\em
  Synth. Libr.}, pages 35--53. Springer, Cham, [2019] \copyright 2019.

\bibitem{Belnap2}
Nuel~D. Belnap.
\newblock A useful four-valued logic.
\newblock In {\em New essays on {B}elnap-{D}unn logic}, volume 418 of {\em
  Synth. Libr.}, pages 55--76. Springer, Cham, [2019] \copyright 2019.

\bibitem{Carnielli}
Walter Carnielli and Marcelo~E. Coniglio.
\newblock Paraconsistent set theory by predicating on consistency.
\newblock {\em J. Logic Comput.}, 26(1):97--116, 2016.

\bibitem{LFI1}
Walter Carnielli, Marcelo~E. Coniglio, and Jo{\~a}o Marcos.
\newblock {\em Logics of Formal Inconsistency}, pages 1--93.
\newblock Springer Netherlands, Dordrecht, 2007.

\bibitem{Dunn}
J.~Michael Dunn.
\newblock Intuitive semantics for first-degree entailments and `coupled trees'.
\newblock {\em Philos. Studies}, 29(3):149--168, 1976.

\bibitem{Geach}
P.~T. Geach.
\newblock {On Insolubilia}.
\newblock {\em Analysis}, 15(3):71--72, 01 1955.

\bibitem{K3arrow}
Allen~P. Hazen and Francis~Jeffry Pelletier.
\newblock K3, {L}3, {LP}, {RM}3, {A}3, {FDE}, {M}: how to make many-valued
  logics work for you.
\newblock In {\em New essays on {B}elnap-{D}unn logic}, volume 418 of {\em
  Synth. Libr.}, pages 155--190. Springer, Cham, [2019] \copyright 2019.

\bibitem{Hinnion}
Roland Hinnion.
\newblock About the coexistence of ``classical sets'' with ``non-classical''
  ones: a survey.
\newblock Number 11-12, pages 79--90. 2003.
\newblock Flemish-Polish Workshops II--IV and varia.

\bibitem{KleeneK3}
S.~C. Kleene.
\newblock On notation for ordinal numbers.
\newblock {\em Journal of Symbolic Logic}, 3(4):150?155, 1938.

\bibitem{KunenInconsistency}
Kenneth Kunen.
\newblock Elementary embeddings and infinitary combinatorics.
\newblock {\em J. Symbolic Logic}, 36:407--413, 1971.

\bibitem{Libert}
Thierry Libert.
\newblock Models for a paraconsistent set theory.
\newblock {\em J. Appl. Log.}, 3(1):15--41, 2005.

\bibitem{HrafnThesis}
Hrafn~Valt{\'y}r Oddsson.
\newblock Paradefinite {Z}ermelo-{F}raenkel set theory: A theory of
  inconsistent and incomplete sets.
\newblock Master's thesis, Universiteit van Amsterdam, 2021.
\newblock ILLC Publication MoL-2021-27.

\bibitem{Observations}
Hitoshi Omori and Toshiharu Waragai.
\newblock Some observations on the systems {LFI}1 and {LFI}1.
\newblock In {\em 22nd International Workshop on Database and Expert Systems
  Applications}, pages 320--324, 2011.

\bibitem{RobertP}
Robert Passmann.
\newblock Should pluralists be pluralists about pluralism?
\newblock {\em Synthese}, 199(5-6):12663--12682, 2021.

\bibitem{Priest-LP}
Graham Priest.
\newblock The logic of paradox.
\newblock {\em J. Philos. Logic}, 8(2):219--241, 1979.

\bibitem{Strongimplication}
Helena Rasiowa.
\newblock {\em An algebraic approach to non-classical logics}.
\newblock Studies in Logic and the Foundations of Mathematics, Vol. 78.
  North-Holland Publishing Co., Amsterdam-London; American Elsevier Publishing
  Co., Inc., New York, 1974.

\bibitem{NoteNaive}
Greg Restall.
\newblock A note on naive set theory in {${\rm LP}$}.
\newblock {\em Notre Dame J. Formal Logic}, 33(3):422--432, 1992.

\bibitem{BS4complete}
Katsuhiko Sano and Hitoshi Omori.
\newblock An expansion of first-order {B}elnap-{D}unn logic.
\newblock {\em Log. J. IGPL}, 22(3):458--481, 2014.

\bibitem{waterplants}
Andrea Sereni and Maria Fogliani.
\newblock How to water a thousand flowers. on the logic of logical pluralism.
\newblock {\em Inquiry}, 63:1--24, 09 2017.

\bibitem{Shapiro-Varieties}
Stewart Shapiro.
\newblock {\em Varieties of Logic}.
\newblock Oxford University Press, 2014.

\bibitem{Weber}
Zach Weber.
\newblock {\em Paradoxes and Inconsistent Mathematics}.
\newblock Cambridge University Press, 2021.

\end{thebibliography}

\end{document}